\documentclass[]{amsart}
\usepackage{amsmath}
\usepackage{amsfonts}
\usepackage{amssymb}
\usepackage{amsthm}
\usepackage{enumerate}
\usepackage{enumitem}
\usepackage{graphicx,color,xcolor,tikz}
\usepackage{bbold}
\usepackage{tikz-cd}
\usepackage[utf8]{inputenc}
\usepackage{hyperref}
\hypersetup{
    colorlinks=true,                         
    linkcolor=blue,
    citecolor=red, 
  } 

\newtheorem{Lem}{Lemma}
\newtheorem{Thm}{Theorem}
\newtheorem{Prop}{Proposition}

\newtheorem{Rem}{Remark}

\renewcommand{\dfrac}[2]{\frac{#1}{#2}}

\newcommand{\dt}{\Delta t}
\newcommand{\dx}{\Delta x}
\newcommand{\sig}{\sigma} % CFL number
\title{Relative Entropy Analysis of the Jin-Xin Model: Theory and Numerics}

\date{\today}

 \author{C. Mahmoud}
 \address{Institut Montpelliérain Alexander Grothendieck, Université de
   Montpellier, CNRS, Montpellier, France}
 \email{christina.mahmoud@umontpellier.fr}

\begin{document}

\begin{abstract}
This paper studies the relaxation limit and the numerical approximation of the linear Jin-Xin model using the relative entropy method. Under the subcharacteristic condition, we establish a first-order relative entropy estimate with respect to the relaxation parameter towards the equilibrium transport equation. We then analyze two fully discrete asymptotic-preserving methods: a Lie splitting scheme and a staggered scheme. For each method, the relaxation solution is compared with the solution of the corresponding limiting discrete scheme. We derive discrete relative entropy identities and obtain global estimates that are uniform with respect to the relaxation parameter for well-prepared data. Numerical experiments at fixed mesh and final time reveal an additional quadratic branch for small values of the relaxation parameter, which is not detected by the global estimates. An exact fixed-time analysis identifies the mechanism responsible for this sharper behavior and proves the quadratic rate observed when the relaxation scale is smaller than the numerical time step.
\end{abstract}

\maketitle

\noindent
\textbf{Key-words.}  Hyperbolic systems with relaxation, asymptotic
preserving schemes, Relative entropy, Jin-Xin

 \noindent
 \textbf{2020 MCS.} 35L40, 65M08, 65M12

\tableofcontents

\section{Introduction}
\label{sec-introduction}

Relaxation systems provide a useful framework for the analysis and numerical
approximation of conservation laws. In this approach, a target conservation
law is approximated by a hyperbolic relaxation system involving additional
variables and a stiff source term scaled by a small parameter
\(\varepsilon>0\), called the relaxation parameter. The source term drives the
solution towards an equilibrium manifold, and the target conservation law is
recovered in the relaxation limit \(\varepsilon\to0\).

The Jin-Xin model \cite{JinXin1995} is a standard prototype of this
framework. It provides an explicit description of the interplay between
hyperbolic propagation, relaxation towards equilibrium, and entropy
dissipation. Its stability is governed by the subcharacteristic condition
introduced in the general framework of \cite{ChenLevermoreLiu94}. This
condition relates the characteristic speeds of the relaxation system to those
of the equilibrium equation and ensures the existence of a strictly convex
entropy.

The relative entropy method provides a natural stability functional for
comparing a solution of the relaxation system with a reference solution of the
limiting equation; see, for example,
\cite{DafermosBook10,Tzavaras05,LattanzioTzavaras12}. At the fully discrete
level, the relative entropy analysis must account simultaneously for the
numerical viscosity of the convective flux and for the dissipation induced by
the discrete treatment of the stiff source term
\cite{BerthonBessemoulinMathis16,BessemoulinMathis24,
BulteauBerthonBessemoulin19}.

This issue is particularly relevant for asymptotic-preserving schemes. Such
schemes remain stable under a Courant-Friedrichs-Lewy restriction independent
of the relaxation parameter and reduce, as \(\varepsilon\to0\), to a
consistent discretization of the equilibrium equation
\cite{jin2010asymptotic,Jin2022,JinLevermore96}.

In this paper, we establish continuous and fully discrete relative entropy
estimates for the linear Jin-Xin model. At the discrete level, we study a Lie
splitting scheme and a staggered asymptotic-preserving scheme. For each method,
the relaxation solution is compared with the solution of the corresponding
limiting discrete scheme lifted onto the equilibrium manifold. For
well-prepared initial data, the resulting global estimates are uniform with
respect to the relaxation parameter and reproduce the continuous
\(O(\varepsilon)\) relative entropy bound.

The numerical experiments reveal a sharper behavior that is not identified
by these global estimates. As shown in
Figure~\ref{fig:intro-fixed-time}, the discrete relative entropy exhibits a
quadratic branch for the smallest values of the relaxation parameter. This
observation motivates the fixed-time analysis carried out in the final
section. There, we derive and iterate the discrete relations satisfied by the
difference between the relaxation solution and the corresponding limiting
discrete solution, in order to explain the observed
\(O(\varepsilon^2)\) regime.

\begin{figure}[!t]
  \centering
  \includegraphics[width=0.88\textwidth]{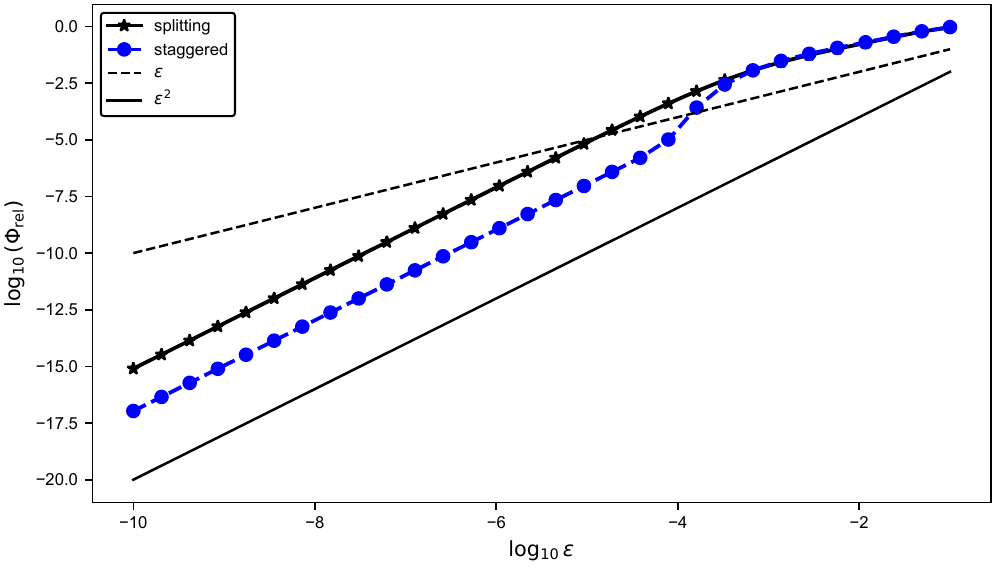}
  \caption{Fixed-time discrete relative entropy for the Lie splitting and
  staggered schemes. The dashed and solid reference lines have slopes \(1\)
  and \(2\), respectively}
  \label{fig:intro-fixed-time}
\end{figure}

\noindent\textbf{Organization of the paper.}
Section~\ref{sec:continuous-setting} presents the continuous Jin-Xin model and
the corresponding relative entropy estimate.
Section~\ref{sec:numerical-schemes} analyzes the Lie splitting scheme, its
limiting discretization, and the associated discrete entropy structure.
Section~\ref{sec:disc-rel-entropy-stag} treats the staggered scheme and its
discrete relative entropy estimate. Finally,
Section~\ref{sec:fixed-time-regimes} presents the numerical results and
explains the quadratic branch through a fixed-time error analysis.
%=======================================================================================
\section{Continuous framework and model presentation}
\label{sec:continuous-setting}

%==============================================
% Linear Jin-Xin model
%==============================================
\subsection{The linear Jin-Xin model}

We consider the one-dimensional linear transport equation
\begin{equation}
  \partial_t u + a\,\partial_x u = 0,
  \qquad
  (t,x)\in\mathbb{R}_+\times\mathbb{R},
  \label{eq:scalar-conservation}
\end{equation}
where \(u=u(t,x)\) is the scalar unknown and
\(a\in\mathbb{R}\) is the constant transport velocity.\\
A Jin-Xin relaxation approximation of
\eqref{eq:scalar-conservation}
\cite{ChenLevermoreLiu94,JinXin1995,SerreRelax2000}
is given by
\begin{equation*}
  \begin{cases}
    \partial_t u+\partial_x v=0,\\[0.3em]
    \partial_t v+\lambda^2\partial_x u
    =
    \dfrac{1}{\varepsilon}(au-v),
  \end{cases}
  \qquad
  (t,x)\in\mathbb{R}_+\times\mathbb{R},
  \tag{$H_\varepsilon$}
  \label{eq:jin-xin-system}
\end{equation*}
where \(v=v(t,x)\) is the relaxation variable,
\(\lambda>0\) is the characteristic speed of the homogeneous system, and
\(\varepsilon>0\) is the relaxation parameter. The source term becomes stiff
as \(\varepsilon\to0\) and drives the solution towards equilibrium.\\
Introducing
\[
  \mathbf U
  :=
  \begin{pmatrix}
    u\\
    v
  \end{pmatrix},
  \qquad
  A
  :=
  \begin{pmatrix}
    0&1\\
    \lambda^2&0
  \end{pmatrix},
  \qquad
  \mathbf R(\mathbf U)
  :=
  \begin{pmatrix}
    0\\
    au-v
  \end{pmatrix},
\]
system \eqref{eq:jin-xin-system} can be written in vector form as
\begin{equation}
  \partial_t\mathbf U
  +
  \partial_x\mathbf F(\mathbf U)
  =
  \dfrac{1}{\varepsilon}\mathbf R(\mathbf U),
  \qquad
  \mathbf F(\mathbf U)=A\mathbf U.
  \label{eq:JX-vector}
\end{equation}
The eigenvalues of \(A\) are \(-\lambda\) and \(\lambda\). Hence, since
\(\lambda>0\), the homogeneous part of the relaxation system is strictly
hyperbolic.\\
Formally, in the relaxation limit \(\varepsilon\to0\), the source term imposes
\begin{equation}\label{eq:local-equilibrium}
  \mathbf R(\bar{\mathbf U})=0 \Longleftrightarrow  \bar v=a\bar u.
\end{equation}
The equilibrium manifold is therefore
\begin{equation}
  \mathcal M_{\mathrm{eq}}
  :=
  \left\{
    (\bar u,\bar v)\in\mathbb{R}^2
    \,;\,
    \bar v=a\bar u
  \right\}.
  \label{eq:equilibrium-manifold}
\end{equation}
Substituting the equilibrium relation
\eqref{eq:local-equilibrium} into the first equation of
\eqref{eq:jin-xin-system} gives the limiting transport equation
\begin{equation}
  \partial_t\bar u+a\,\partial_x\bar u=0.
  \label{eq:limiting-transport}
\end{equation}
To compare the relaxation system with its limit, we lift the limiting solution
onto the equilibrium manifold by setting
\[
  \bar{\mathbf U}
  :=
  \begin{pmatrix}
    \bar u\\
    a\bar u
  \end{pmatrix}.
\]
Using \eqref{eq:limiting-transport}, the lifted equilibrium solution satisfies
\begin{equation*}
  \begin{cases}
    \partial_t\bar u+\partial_x\bar v=0,\\[1mm]
    \partial_t\bar v+\lambda^2\partial_x\bar u
    =
    (\lambda^2-a^2)\partial_x\bar u.
  \end{cases}
  \tag{$H_0$}
  \label{eq:equilibre}
\end{equation*}
The stability of the relaxation limit is governed by the strict
subcharacteristic condition
\begin{equation}
  |a|<\lambda.
  \label{eq:subchar-final}
\end{equation}
It requires the characteristic speed \(a\) of the equilibrium equation to lie
strictly between the two characteristic speeds \(-\lambda\) and \(\lambda\)
of the relaxation system.
%==============================================
% Entropic structure
%==============================================
\subsection{Entropic structure of the Jin-Xin model}
\label{sec:physical-entropy}

We associate with system \eqref{eq:jin-xin-system} the quadratic entropy
\begin{equation}
  \eta(\mathbf U)
  :=
  \frac12\mathbf U^\top H\mathbf U
  =
  \frac{\lambda^2}{2}u^2
  +
  \frac12v^2
  -
  auv,
  \label{eq:eta-def}
\end{equation}
where
\begin{equation}
  H
  :=
  \begin{pmatrix}
    \lambda^2&-a\\
    -a&1
  \end{pmatrix}.
  \label{eq:H-def}
\end{equation}
Under the subcharacteristic condition
\eqref{eq:subchar-final}, the matrix \(H\) is positive definite. Therefore,
\(\eta\) is strictly convex. The associated entropy flux is
\begin{equation}
  \Psi(\mathbf U)
  :=
  \frac12\mathbf U^\top HA\mathbf U
  =
  \lambda^2uv
  -
  \frac{a\lambda^2}{2}u^2
  -
  \frac a2v^2.
  \label{eq:q-def}
\end{equation}
Since \(HA\) is symmetric, \((\eta,\Psi)\) defines an entropy-entropy flux
pair for the homogeneous part of the Jin-Xin system. Multiplying \eqref{eq:JX-vector} by
\(\nabla\eta(\mathbf U)=H\mathbf U\) gives the entropy balance
\begin{equation}
  \partial_t\eta(\mathbf U)
  +
  \partial_x\Psi(\mathbf U)
  =
  -\dfrac{1}{\varepsilon}(v-au)^2
  \leq0.
  \label{eq:entropy-balance}
\end{equation}
Thus, the relaxation source dissipates entropy and vanishes precisely on the
equilibrium manifold.

%==============================================
% Entropy at equilibrium
%==============================================
\subsection{Entropy behavior at equilibrium}
\label{sec:entropy-equilibrium}

For an equilibrium state
\[
  \bar{\mathbf U}
  =
  (\bar u,a\bar u)^\top,
\]
the entropy and entropy flux defined in
\eqref{eq:eta-def} and \eqref{eq:q-def} reduce to
\begin{equation}
\begin{aligned}
  \bar\eta(\bar u)
  &:=
  \eta(\bar{\mathbf U})
  =
  \frac{\lambda^2-a^2}{2}\bar u^2,\\[0.3em]
  \bar\Psi(\bar u)
  &:=
  \Psi(\bar{\mathbf U})
  =
  \frac{a(\lambda^2-a^2)}{2}\bar u^2.
\end{aligned}
\label{eq:entropie-equilibre}
\end{equation}
Under \eqref{eq:subchar-final}, the reduced entropy
\(\bar\eta\) is strictly convex.
Moreover, if \(\bar u\) satisfies the limiting transport equation
\eqref{eq:limiting-transport}, then
\begin{equation}
  \partial_t\eta(\bar{\mathbf U})
  +
  \partial_x\Psi(\bar{\mathbf U})
  =
  0.
  \label{eq:entropy-balance-Ubar}
\end{equation}
Finally, since \(\eta\) and \(\Psi\) are quadratic, for every
\(\delta\mathbf U\in\mathbb{R}^2\) one has
\begin{equation}
\begin{aligned}
  \eta(\mathbf U+\delta\mathbf U)-\eta(\mathbf U)
  &=
  \nabla\eta(\mathbf U)\cdot\delta\mathbf U
  +
  \frac12
  \delta\mathbf U^\top H\delta\mathbf U,\\
  \Psi(\mathbf U+\delta\mathbf U)-\Psi(\mathbf U)
  &=
  \nabla\eta(\mathbf U)\cdot A\delta\mathbf U
  +
  \frac12
  \delta\mathbf U^\top HA\delta\mathbf U.
\end{aligned}
\label{eq:eta-identity-local-app}
\end{equation}
These identities will be used repeatedly in the continuous and discrete
relative entropy analyses.
%==========================================================
% Relative entropy
%==========================================================
\subsection{Relative entropy}
\label{sec:relative-entropy}

The relative entropy method is a classical stability tool for conservation
laws endowed with a convex entropy; see, for instance,
\cite{DafermosBook10,LattanzioTzavaras12,SerreBook99}. In the present
relaxation setting, it allows us to compare a solution of the Jin-Xin system
with a reference solution of the limiting transport equation lifted onto the
equilibrium manifold.

Let \(\mathbf U\) be a sufficiently regular solution of
\eqref{eq:jin-xin-system} on \([0,T]\times\mathbb R\). Let \(\bar u\) be a
sufficiently regular solution of \eqref{eq:scalar-conservation}, and define
\[
  \bar v:=a\bar u,
  \qquad
  \bar{\mathbf U}:=
  \begin{pmatrix}
    \bar u\\
    \bar v
  \end{pmatrix}.
\]
By construction, \(\bar{\mathbf U}\) takes values in the equilibrium manifold
\(\mathcal M_{\mathrm{eq}}\) defined in
\eqref{eq:equilibrium-manifold}.\\
The relative entropy and the associated relative entropy flux are defined by
\begin{equation}
\label{eq:relative-entropy-def}
\begin{aligned}
  \eta_{\mathrm{rel}}
  \bigl(\mathbf U\mid\bar{\mathbf U}\bigr)
  &:=
  \eta(\mathbf U)
  -
  \eta(\bar{\mathbf U})
  -
  \nabla\eta(\bar{\mathbf U})
  \cdot
  \bigl(\mathbf U-\bar{\mathbf U}\bigr),\\[0.4em]
  \Psi_{\mathrm{rel}}
  \bigl(\mathbf U\mid\bar{\mathbf U}\bigr)
  &:=
  \Psi(\mathbf U)
  -
  \Psi(\bar{\mathbf U})
  -
  \nabla\eta(\bar{\mathbf U})
  \cdot
  \bigl(
    \mathbf F(\mathbf U)-\mathbf F(\bar{\mathbf U})
  \bigr).
\end{aligned}
\end{equation}
Since the Jin-Xin system is linear and the entropy and entropy flux are
quadratic, identities \eqref{eq:eta-identity-local-app} give
\begin{equation}
\label{eq:relative-entropy-quadratic}
\begin{aligned}
  \eta_{\mathrm{rel}}
  \bigl(\mathbf U\mid\bar{\mathbf U}\bigr)
  &=
  \frac12
  \bigl(\mathbf U-\bar{\mathbf U}\bigr)^\top
  H
  \bigl(\mathbf U-\bar{\mathbf U}\bigr),\\[0.4em]
  \Psi_{\mathrm{rel}}
  \bigl(\mathbf U\mid\bar{\mathbf U}\bigr)
  &=
  \frac12
  \bigl(\mathbf U-\bar{\mathbf U}\bigr)^\top
  HA
  \bigl(\mathbf U-\bar{\mathbf U}\bigr).
\end{aligned}
\end{equation}
Under the subcharacteristic condition
\eqref{eq:subchar-final}, the matrix \(H\) is positive definite. Therefore,
\[
  \eta_{\mathrm{rel}}
  \bigl(\mathbf U\mid\bar{\mathbf U}\bigr)\geq0,
\]
with equality if and only if
\(\mathbf U=\bar{\mathbf U}\).

We define the relative entropy functional by
\begin{equation}
\label{eq:Phi-continuous}
  \Phi(t)
  :=
  \int_{\mathbb R}
  \eta_{\mathrm{rel}}
  \bigl(
    \mathbf U(t,x)\mid\bar{\mathbf U}(t,x)
  \bigr)\,dx.
\end{equation}
Since \(H\) is positive definite under the subcharacteristic condition
\eqref{eq:subchar-final}, the relative entropy functional is equivalent to the
squared \(L^2\)-distance between the relaxation solution and the equilibrium
solution. More precisely, there exist constants \(c_1,c_2>0\), depending only
on \(a\) and \(\lambda\), such that
\begin{equation}
\label{eq:Phi-L2-equivalence-short}
  c_1
  \left\|
    \mathbf U(t,\cdot)-\bar{\mathbf U}(t,\cdot)
  \right\|_{L^2(\mathbb R)}^2
  \leq
  \Phi(t)
  \leq
  c_2
  \left\|
    \mathbf U(t,\cdot)-\bar{\mathbf U}(t,\cdot)
  \right\|_{L^2(\mathbb R)}^2
\end{equation}
for every \(t\in[0,T]\).
%----------------------------------------------------------
\subsection{Local relative entropy identity}

We first derive the local balance satisfied by the relative entropy.

\begin{Prop}[Local relative entropy identity]
\label{prop:local-relative-entropy-JX}
Assume the subcharacteristic condition \eqref{eq:subchar-final}. Let
\(\mathbf U=(u,v)^\top\) be a smooth solution of
\eqref{eq:jin-xin-system}, and let \(\bar u\) be a smooth solution of
\eqref{eq:scalar-conservation}. Set
\[
  \bar v=a\bar u,
  \qquad
  \bar{\mathbf U}=(\bar u,\bar v)^\top.
\]
Then
\begin{equation}
\label{eq:rel-entropy-balance}
\begin{aligned}
  \partial_t
  \eta_{\mathrm{rel}}
  \bigl(\mathbf U\mid\bar{\mathbf U}\bigr)
  +
  \partial_x
  \Psi_{\mathrm{rel}}
  \bigl(\mathbf U\mid\bar{\mathbf U}\bigr)
  &=
  -\frac{1}{\varepsilon}
  \bigl[
    a(u-\bar u)-(v-\bar v)
  \bigr]^2\\
  &\quad
  +
  (\lambda^2-a^2)
  \bigl[
    a(u-\bar u)-(v-\bar v)
  \bigr]
  \partial_x\bar u.
\end{aligned}
\end{equation}
\end{Prop}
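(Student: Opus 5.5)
Since the system is linear and $\eta,\Psi$ are quadratic, the plan is to work entirely with the difference $\mathbf W:=\mathbf U-\bar{\mathbf U}=(w,z)^\top$, where $w=u-\bar u$ and $z=v-\bar v$. By \eqref{eq:relative-entropy-quadratic}, $\eta_{\mathrm{rel}}(\mathbf U\mid\bar{\mathbf U})=\tfrac12\mathbf W^\top H\mathbf W$ and $\Psi_{\mathrm{rel}}(\mathbf U\mid\bar{\mathbf U})=\tfrac12\mathbf W^\top HA\mathbf W$. The whole identity should therefore follow from the evolution equation for $\mathbf W$, multiplied by $H\mathbf W$.

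\textbf{Step 1: equation for $\mathbf W$.} Subtract \eqref{eq:equilibre} from \eqref{eq:jin-xin-system}. The first components give $\partial_t w+\partial_x z=0$. For the second components, use that $\bar v=a\bar u$, so $au-v=aw-z$. This yields
\[
\partial_t z+\lambda^2\partial_x w=\frac1\varepsilon(aw-z)-(\lambda^2-a^2)\partial_x\bar u .
\]
In vector form,
\[
\partial_t\mathbf W+A\partial_x\mathbf W=\frac1\varepsilon\mathbf R(\mathbf W)-(\lambda^2-a^2)\partial_x\bar u\,(0,1)^\top .
\]

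\textbf{Step 2: multiply by $H\mathbf W$.} Since $H$ is symmetric, $\partial_t(\tfrac12\mathbf W^\top H\mathbf W)=\mathbf W^\top H\partial_t\mathbf W$. Since $HA$ is symmetric (as noted after \eqref{eq:q-def}), $\mathbf W^\top HA\partial_x\mathbf W=\partial_x(\tfrac12\mathbf W^\top HA\mathbf W)$. The left-hand side is therefore exactly $\partial_t\eta_{\mathrm{rel}}+\partial_x\Psi_{\mathrm{rel}}$.

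\textbf{Step 3: evaluate the right-hand side.} The only ingredient is $(H\mathbf W)_2=-aw+z=-(aw-z)$. Both source terms have vanishing first component, so only this second component of $H\mathbf W$ enters. The relaxation term then gives
\[
\frac1\varepsilon\,(aw-z)\cdot\bigl(-(aw-z)\bigr)=-\frac1\varepsilon(aw-z)^2 .
\]
The equilibrium defect term gives
\[
-(\lambda^2-a^2)\partial_x\bar u\cdot\bigl(-(aw-z)\bigr)=(\lambda^2-a^2)(aw-z)\partial_x\bar u .
\]
Together these are precisely the right-hand side of \eqref{eq:rel-entropy-balance}.

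No real obstacle is expected. The only points needing care are the sign bookkeeping in the defect term $(\lambda^2-a^2)\partial_x\bar u$ inherited from \eqref{eq:equilibre}, and checking the symmetry of $HA$. Compute $HA=\begin{pmatrix}-a\lambda^2&\lambda^2\\ \lambda^2&-a\end{pmatrix}$, which is symmetric and consistent with \eqref{eq:q-def}. The subcharacteristic condition is not needed for the identity itself; it only guarantees $\eta_{\mathrm{rel}}\ge0$.
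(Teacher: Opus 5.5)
Your proposal is correct and follows the paper's proof essentially step for step. Like the paper, you write $\eta_{\mathrm{rel}}$ and $\Psi_{\mathrm{rel}}$ as quadratic forms in $\mathbf U-\bar{\mathbf U}$, subtract the lifted system \eqref{eq:equilibre} to get the equation for the difference, use the symmetry of $H$ and $HA$, and observe that only the second component $-\bigl(a(u-\bar u)-(v-\bar v)\bigr)$ of $H(\mathbf U-\bar{\mathbf U})$ contributes. Your side remark that the subcharacteristic condition is not needed for the identity itself, only for $\eta_{\mathrm{rel}}\geq0$, is also accurate.
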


\begin{proof}
Set
\[
  \Delta\mathbf U:=\mathbf U-\bar{\mathbf U}.
\]
By \eqref{eq:relative-entropy-quadratic} and the symmetry of \(H\) and \(HA\),
\[
  \partial_t\eta_{\mathrm{rel}}
  +
  \partial_x\Psi_{\mathrm{rel}}
  =
  (\Delta\mathbf U)^\top H
  \left(
    \partial_t\Delta\mathbf U
    +
    A\partial_x\Delta\mathbf U
  \right).
\]
Subtracting the embedded equilibrium system \eqref{eq:equilibre} from
\eqref{eq:JX-vector} gives
\[
  \partial_t\Delta\mathbf U
  +
  A\partial_x\Delta\mathbf U
  =
  \begin{pmatrix}
    0\\[0.2em]
    \varepsilon^{-1}(au-v)
    -
    (\lambda^2-a^2)\partial_x\bar u
  \end{pmatrix}.
\]
Since
\[
  \Delta\mathbf U
  =
  \begin{pmatrix}
    u-\bar u\\
    v-\bar v
  \end{pmatrix},
\]
a direct multiplication by \(H\) gives
\[
  H\Delta\mathbf U
  =
  \begin{pmatrix}
    \lambda^2(u-\bar u)-a(v-\bar v)\\[0.3em]
    -a(u-\bar u)+(v-\bar v)
  \end{pmatrix}.
\]
Since the right-hand side of the equation for
\(\Delta\mathbf U\) has a vanishing first component, only the second component
of \(H\Delta\mathbf U\) contributes to the scalar product. Moreover, using
\(\bar v=a\bar u\), we have
\[
  au-v
  =
  a(u-\bar u)-(v-\bar v).
\]
Consequently,
\[
\begin{aligned}
  \partial_t\eta_{\mathrm{rel}}
  +
  \partial_x\Psi_{\mathrm{rel}}
  &=
  \bigl[-a(u-\bar u)+(v-\bar v)\bigr]
  \left[
    \frac{1}{\varepsilon}
    \bigl(a(u-\bar u)-(v-\bar v)\bigr)
    -
    (\lambda^2-a^2)\partial_x\bar u
  \right],
\end{aligned}
\]
which gives \eqref{eq:rel-entropy-balance}.
\end{proof}

%----------------------------------------------------------
\subsection{Global relative entropy estimate}

We now integrate the local identity to obtain the continuous stability
estimate.

\begin{Prop}[Global relative entropy estimate]
\label{prop:relative-entropy-JX}
Let \(\mathbf U=(u,v)^\top\) be a smooth solution of
\eqref{eq:jin-xin-system} on \([0,T]\times\mathbb R\), and let \(\bar u\) be
a smooth solution of \eqref{eq:scalar-conservation}. Set
\[
  \bar v=a\bar u,
  \qquad
  \bar{\mathbf U}=(\bar u,\bar v)^\top.
\]
Assume that:

\begin{enumerate}
\item[(i)]
the strict subcharacteristic condition \eqref{eq:subchar-final} holds;

\item[(ii)]
for every \(t\in[0,T]\), the relaxation and equilibrium solutions have the
same constant far-field states:
\begin{equation}
\label{eq:states-at-infinity}
  \lim_{x\to-\infty}\mathbf U(t,x)
  =
  \lim_{x\to-\infty}\bar{\mathbf U}(t,x)
  =
  \mathbf U_-,
  \qquad
  \lim_{x\to+\infty}\mathbf U(t,x)
  =
  \lim_{x\to+\infty}\bar{\mathbf U}(t,x)
  =
  \mathbf U_+,
\end{equation}
where \(\mathbf U_-\) and \(\mathbf U_+\) belong to
\(\mathcal M_{\mathrm{eq}}\);

\item[(iii)]
there exists \(K>0\) such that
\begin{equation}
\label{eq:dubar-L2-bound}
  \int_0^t
  \int_{\mathbb R}
  |\partial_x\bar u(s,x)|^2\,dx\,ds
  \leq K^2,
  \qquad
  t\in[0,T].
\end{equation}
\end{enumerate}

Then, for every \(t\in[0,T]\),
\begin{equation}
\label{eq:relative-entropy-full}
\begin{aligned}
  \Phi(t)
  &+
  \frac{1}{2\varepsilon}
  \int_0^t
  \int_{\mathbb R}
  \bigl[
    a(u-\bar u)-(v-\bar v)
  \bigr]^2
  \,dx\,ds\\
  &\leq
  \Phi(0)
  +
  \frac{\varepsilon(\lambda^2-a^2)^2}{2}
  \int_0^t
  \int_{\mathbb R}
  |\partial_x\bar u(s,x)|^2
  \,dx\,ds.
\end{aligned}
\end{equation}
In particular,
\begin{equation}
\label{eq:relative-entropy-coarse-K}
  \Phi(t)
  \leq
  \Phi(0)
  +
  \frac{\varepsilon(\lambda^2-a^2)^2}{2}K^2.
\end{equation}
\end{Prop}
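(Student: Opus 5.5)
We integrate the local identity of Proposition~\ref{prop:local-relative-entropy-JX} over \([0,t]\times\mathbb R\) and then absorb the cross term with Young's inequality.

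\textbf{Step 1: integrate in space.} Write \(\Delta\mathbf U=\mathbf U-\bar{\mathbf U}\) and \(w:=a(u-\bar u)-(v-\bar v)\). Integrating \eqref{eq:rel-entropy-balance} in \(x\) over \([-R,R]\) gives
\[
  \frac{d}{dt}\int_{-R}^{R}\eta_{\mathrm{rel}}\,dx
  +\Psi_{\mathrm{rel}}\big|_{x=-R}^{x=R}
  =-\frac1\varepsilon\int_{-R}^{R} w^2\,dx
  +(\lambda^2-a^2)\int_{-R}^{R} w\,\partial_x\bar u\,dx.
\]
By \eqref{eq:relative-entropy-quadratic}, \(\Psi_{\mathrm{rel}}=\frac12\Delta\mathbf U^\top HA\Delta\mathbf U\) is quadratic in \(\Delta\mathbf U\). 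By assumption (ii), \(\Delta\mathbf U(t,x)\to \mathbf U_\pm-\mathbf U_\pm=0\) as \(x\to\pm\infty\), so the boundary term vanishes when \(R\to\infty\).

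\textbf{Step 2: the main obstacle, justifying the limit.} To pass to the limit \(R\to\infty\) we need \(\Delta\mathbf U(t,\cdot)\in L^2\), so that \(\Phi(t)\) is finite, and \(w\,\partial_x\bar u\in L^1\). Here "smooth" is interpreted as including enough integrability for these to hold, which is implicit in the definition of \(\Phi\) in \eqref{eq:Phi-continuous}. If only the limits in (ii) are available, I would instead integrate in time first on \([-R,R]\). Then I would use that the flux terms \(\int_0^t\Psi_{\mathrm{rel}}(s,\pm R)\,ds\) tend to zero, by dominated convergence in \(s\) under uniform convergence of the far-field limits on \([0,t]\).

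\textbf{Step 3: Young's inequality.} Apply
\[
  (\lambda^2-a^2)\,w\,\partial_x\bar u\le \frac{1}{2\varepsilon}w^2+\frac{\varepsilon(\lambda^2-a^2)^2}{2}|\partial_x\bar u|^2.
\]
Half of the dissipation \(-\frac1\varepsilon\int w^2\) absorbs the first term on the right. This leaves
\[
  \Phi'(s)+\frac{1}{2\varepsilon}\int_{\mathbb R} w^2\,dx\le \frac{\varepsilon(\lambda^2-a^2)^2}{2}\int_{\mathbb R}|\partial_x\bar u|^2\,dx.
\]

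\textbf{Step 4: integrate in time.} Integrating over \([0,t]\) yields \eqref{eq:relative-entropy-full}. Dropping the nonnegative dissipation term, which is legitimate since \(\varepsilon>0\), and using (iii) gives \eqref{eq:relative-entropy-coarse-K}.

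Assumption (i) is used only to ensure \(\Phi\ge0\) and its \(L^2\) equivalence \eqref{eq:Phi-L2-equivalence-short}. The inequality itself follows from the identity and Young's inequality alone.
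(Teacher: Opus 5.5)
Your proposal is correct and follows the paper's proof. Like the paper, you integrate the local identity of Proposition~\ref{prop:local-relative-entropy-JX} over $(0,t)\times\mathbb R$, remove the flux term using the far-field condition (ii), absorb the cross term by Young's inequality with weights $\frac{1}{2\varepsilon}$ and $\frac{\varepsilon(\lambda^2-a^2)^2}{2}$, and then invoke (iii); your extra discussion of the limit $R\to\infty$ only makes explicit the integrability the paper leaves implicit in the word ``smooth''.
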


\begin{proof}
Integrating \eqref{eq:rel-entropy-balance} over
\((0,t)\times\mathbb R\), the relative entropy flux contribution vanishes by
the far-field condition \eqref{eq:states-at-infinity}. Hence,
\[
\begin{aligned}
  \Phi(t)-\Phi(0)
  &=
  -\frac{1}{\varepsilon}
  \int_0^t
  \int_{\mathbb R}
  \bigl[
    a(u-\bar u)-(v-\bar v)
  \bigr]^2
  \,dx\,ds\\
  &\quad
  +
  (\lambda^2-a^2)
  \int_0^t
  \int_{\mathbb R}
  \bigl[
    a(u-\bar u)-(v-\bar v)
  \bigr]
  \partial_x\bar u
  \,dx\,ds.
\end{aligned}
\]
By Young's inequality,
\[
\begin{aligned}
  &(\lambda^2-a^2)
  \bigl[
    a(u-\bar u)-(v-\bar v)
  \bigr]
  \partial_x\bar u\\
  &\qquad\leq
  \frac{1}{2\varepsilon}
  \bigl[
    a(u-\bar u)-(v-\bar v)
  \bigr]^2
  +
  \frac{\varepsilon(\lambda^2-a^2)^2}{2}
  |\partial_x\bar u|^2.
\end{aligned}
\]
Substituting this inequality into the previous identity and integrating gives
\eqref{eq:relative-entropy-full}. Estimate
\eqref{eq:relative-entropy-coarse-K} then follows from
\eqref{eq:dubar-L2-bound}.
\end{proof}

Combining \eqref{eq:relative-entropy-coarse-K} with the equivalence
\eqref{eq:Phi-L2-equivalence-short}, we obtain
\begin{equation}
\label{eq:continuous-L2-estimate}
  \sup_{t\in[0,T]}
  \left\|
    \mathbf U(t,\cdot)-\bar{\mathbf U}(t,\cdot)
  \right\|_{L^2(\mathbb R)}^2
  \leq
  C\bigl(\Phi(0)+\varepsilon\bigr),
\end{equation}
where \(C>0\) depends only on \(a\), \(\lambda\), and \(K\).

In particular, if the initial data are well prepared in the sense that
\begin{equation}
\label{eq:continuous-well-prepared-data}
  \Phi(0)=0,
\end{equation}
then
\[
  \sup_{t\in[0,T]}
  \left\|
    \mathbf U(t,\cdot)-\bar{\mathbf U}(t,\cdot)
  \right\|_{L^2(\mathbb R)}
  =
  O(\varepsilon^{1/2}).
\]
Consequently,
\[
  \mathbf U
  \longrightarrow
  \bar{\mathbf U}
  \qquad
  \text{in }
  L^\infty\bigl(0,T;L^2(\mathbb R)\bigr)
  \qquad
  \text{as }\varepsilon\to0.
\]

The continuous analysis therefore yields an \(O(\varepsilon)\) relative entropy estimate and quantifies the entropy dissipation induced by the relaxation source. We now investigate how this structure is preserved at the fully discrete level.
%%%%%%%%%%%%%%%%%%%%%%%%%%%%%%%%%%%%%%%%%%%%%%%%%%%%%%%%%%%%%%%%%%
%==========================================================
% Numerical schemes
%==========================================================
\section{Lie splitting scheme and discrete entropy structure}
\label{sec:numerical-schemes}

%----------------------------------------------------------
\subsection{Definition of the Lie splitting scheme}
\label{subsec:jx-splitting-scheme}

We consider a uniform spatial grid \(x_j=j\dx\), \(j\in\mathbb Z\), with mesh
size \(\dx>0\), and the discrete times
\[
  t^n=n\dt,
  \qquad
  n\in\mathbb N,
\]
where \(\dt>0\) is the time step. We denote by
\[
  \mathbf U_j^n
  =
  \begin{pmatrix}
    u_j^n\\[2pt]
    v_j^n
  \end{pmatrix}
\]
the discrete approximation of \(\mathbf U\) at position \(x_j\) and time
\(t^n\), and set
\[
  \sig:=\frac{\dt}{\dx}.
\]

The Lie splitting scheme separates the homogeneous hyperbolic part from the
relaxation source term
\cite{JinLevermore96,Natalini1996}. First, the homogeneous system is advanced
by the conservative step
\begin{equation}
\label{eq:conv-step}
  \mathbf U_j^{n+\frac12}
  =
  \mathbf U_j^n
  -
  \sig
  \left(
    \widehat{\mathbf F}_{j+\frac12}^n
    -
    \widehat{\mathbf F}_{j-\frac12}^n
  \right).
\end{equation}
For the linear Jin-Xin system, we use the Rusanov numerical flux
\begin{equation}
\label{eq:HLL-flux}
  \widehat{\mathbf F}_{j+\frac12}^n
  =
  \frac12
  \left(
    \mathbf F(\mathbf U_j^n)
    +
    \mathbf F(\mathbf U_{j+1}^n)
  \right)
  -
  \frac{\lambda}{2}
  \left(
    \mathbf U_{j+1}^n-\mathbf U_j^n
  \right).
\end{equation}
Since the characteristic speeds of the homogeneous system are
\(-\lambda\) and \(\lambda\), this flux coincides with the corresponding
Harten-Lax-van Leer flux
\cite{HartenLaxvanLeer1983}.

The relaxation source term is then treated implicitly:
\begin{equation}
\label{eq:source-step}
  \mathbf U_j^{n+1}
  =
  \mathbf U_j^{n+\frac12}
  +
  \frac{\dt}{\varepsilon}
  \mathbf R(\mathbf U_j^{n+1}).
\end{equation}
The time step satisfies the Courant-Friedrichs-Lewy (CFL) condition
\begin{equation}
\label{CFL}
  \lambda\sig\leq1.
\end{equation}
Equations \eqref{eq:conv-step} and \eqref{eq:source-step} define the fully
discrete Lie splitting approximation of the Jin-Xin system.

%----------------------------------------------------------
\subsection{Discrete entropy identity}
\label{sec:disc-entropy-scheme}

We now derive the discrete counterpart of the continuous entropy balance
\eqref{eq:entropy-balance}. For every \(j\in\mathbb Z\) and
\(n\in\mathbb N\), we set
\[
  \eta_j^n:=\eta(\mathbf U_j^n),
  \qquad
  \eta_j^{n+\frac12}:=\eta(\mathbf U_j^{n+\frac12}),
\]
and define
\begin{equation}
\label{eq:DeltaF-def}
  \Delta\mathbf F_j^n
  :=
  \widehat{\mathbf F}_{j+\frac12}^n
  -
  \widehat{\mathbf F}_{j-\frac12}^n.
\end{equation}

\begin{Thm}[Discrete entropy identity over one time step]
\label{thm:full-disc-entropy}
Let
\((\mathbf U_j^n)_{j\in\mathbb Z,n\in\mathbb N}\)
be the sequence generated by
\eqref{eq:conv-step}-\eqref{eq:source-step}. Define the numerical entropy flux
by
\begin{equation}
\label{eq:qhat-matrix}
  \Psi_{j+\frac12}^n
  :=
  \frac14
  \left(
    \nabla\eta(\mathbf U_j^n)
    \cdot\mathbf F(\mathbf U_{j+1}^n)
    +
    \nabla\eta(\mathbf U_{j+1}^n)
    \cdot\mathbf F(\mathbf U_j^n)
  \right)
  =
  \frac12
  (\mathbf U_j^n)^\top HA\mathbf U_{j+1}^n.
\end{equation}
For the Jin-Xin system, this flux is explicitly given by
\begin{equation}
\label{eq:qhat-explicit}
  \Psi_{j+\frac12}^n
  =
  \frac{\lambda^2}{2}
  \left(
    u_j^n v_{j+1}^n
    +
    u_{j+1}^n v_j^n
  \right)
  -
  \frac{a\lambda^2}{2}u_j^n u_{j+1}^n
  -
  \frac a2v_j^n v_{j+1}^n.
\end{equation}

We introduce
\begin{align}
  R_j^{1,n}
  &:=
  \frac{\lambda}{4}
  \Big[
    (\mathbf U_{j+1}^n-\mathbf U_j^n)^\top
    H(\mathbf U_{j+1}^n-\mathbf U_j^n)
    \nonumber\\[-0.2em]
  &\hspace{4.3cm}
    +
    (\mathbf U_j^n-\mathbf U_{j-1}^n)^\top
    H(\mathbf U_j^n-\mathbf U_{j-1}^n)
  \Big],
  \label{eq:Di-def}\\[0.4em]
  R_j^{2,n}
  &:=
  -\frac{\lambda}{4}
  \Big[
    (\mathbf U_{j+1}^n)^\top H\mathbf U_{j+1}^n
    -
    2(\mathbf U_j^n)^\top H\mathbf U_j^n
    +
    (\mathbf U_{j-1}^n)^\top H\mathbf U_{j-1}^n
  \Big],
  \label{eq:Ri-def}\\[0.4em]
  R_j^{3,n}
  &:=
  (\Delta\mathbf F_j^n)^\top
  H\Delta\mathbf F_j^n.
  \label{eq:Xi-def}
\end{align}
Then the following identity holds:
\begin{align}
\label{eq:full-disc-eta-identity}
  \eta_j^{n+1}
  +
  \sig
  \left(
    \Psi_{j+\frac12}^n-\Psi_{j-\frac12}^n
  \right)
  &=
  \eta_j^n
  -
  \sig R_j^{1,n}
  -
  \sig R_j^{2,n}
  +
  \frac{\sig^2}{2}R_j^{3,n}
  \nonumber\\
  &\quad
  -
  \frac{\dt}{\varepsilon}
  \bigl(v_j^{n+1}-au_j^{n+1}\bigr)^2
  -
  \frac{\dt^2}{2\varepsilon^2}
  \bigl(v_j^{n+1}-au_j^{n+1}\bigr)^2.
\end{align}
\end{Thm}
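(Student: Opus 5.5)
The plan is to treat the two substeps of the Lie splitting separately and to use only the quadratic Taylor identities \eqref{eq:eta-identity-local-app}, which hold exactly because \(\eta\) is quadratic. The convective step \eqref{eq:conv-step} will produce the flux difference together with \(R_j^{1,n}\), \(R_j^{2,n}\) and \(R_j^{3,n}\). The implicit source step \eqref{eq:source-step} will produce the two dissipation terms in \((v_j^{n+1}-au_j^{n+1})^2\). Adding the two resulting identities then gives \eqref{eq:full-disc-eta-identity}.

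\textbf{Convective step.} Apply the first identity of \eqref{eq:eta-identity-local-app} at \(\mathbf U_j^n\) with increment \(\delta\mathbf U=-\sig\Delta\mathbf F_j^n\). This gives
\[
  \eta_j^{n+\frac12}
  =
  \eta_j^n
  -
  \sig\,(\mathbf U_j^n)^\top H\Delta\mathbf F_j^n
  +
  \frac{\sig^2}{2}R_j^{3,n}.
\]
From the Rusanov flux \eqref{eq:HLL-flux} we have
\[
  \Delta\mathbf F_j^n
  =
  \frac12A\bigl(\mathbf U_{j+1}^n-\mathbf U_{j-1}^n\bigr)
  -
  \frac{\lambda}{2}\bigl(\mathbf U_{j+1}^n-2\mathbf U_j^n+\mathbf U_{j-1}^n\bigr).
\]
I will treat the two parts of this expression separately.

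For the centered part, \(HA\) is symmetric, so \((\mathbf U_j^n)^\top HA\mathbf U_{j-1}^n=(\mathbf U_{j-1}^n)^\top HA\mathbf U_j^n\). Hence \(\tfrac12(\mathbf U_j^n)^\top HA(\mathbf U_{j+1}^n-\mathbf U_{j-1}^n)=\Psi_{j+\frac12}^n-\Psi_{j-\frac12}^n\), using the second form of \eqref{eq:qhat-matrix}. The equality of the two forms in \eqref{eq:qhat-matrix} itself follows from \(\nabla\eta(\mathbf U)=H\mathbf U\) and the same symmetry. Expanding the product then gives \eqref{eq:qhat-explicit}.

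For the viscous part, I must check that
\[
  -\frac{\lambda}{2}(\mathbf U_j^n)^\top H\bigl(\mathbf U_{j+1}^n-2\mathbf U_j^n+\mathbf U_{j-1}^n\bigr)
  =
  R_j^{1,n}+R_j^{2,n}.
\]
To do this I expand the two quadratic forms in \(R_j^{1,n}\), using the symmetry of \(H\). The terms \((\mathbf U_{j\pm1}^n)^\top H\mathbf U_{j\pm1}^n\) cancel against those in \(R_j^{2,n}\). What remains is \(\frac{\lambda}{4}\bigl[4(\mathbf U_j^n)^\top H\mathbf U_j^n-2(\mathbf U_j^n)^\top H\mathbf U_{j+1}^n-2(\mathbf U_j^n)^\top H\mathbf U_{j-1}^n\bigr]\), which is exactly the left-hand side. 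This bookkeeping is the only step where some care is needed, and it is the main (modest) obstacle. Collecting terms yields
\[
  \eta_j^{n+\frac12}
  +
  \sig\bigl(\Psi_{j+\frac12}^n-\Psi_{j-\frac12}^n\bigr)
  =
  \eta_j^n
  -
  \sig R_j^{1,n}
  -
  \sig R_j^{2,n}
  +
  \frac{\sig^2}{2}R_j^{3,n}.
\]

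\textbf{Source step.} Expand around the \emph{new} state, using \eqref{eq:eta-identity-local-app} at \(\mathbf U_j^{n+1}\) with increment \(\delta\mathbf U=\mathbf U_j^{n+\frac12}-\mathbf U_j^{n+1}=-\frac{\dt}{\varepsilon}\mathbf R(\mathbf U_j^{n+1})\). Since \(\mathbf R\) has a vanishing first component, only \((H\mathbf U_j^{n+1})_2=v_j^{n+1}-au_j^{n+1}\) enters the linear term. Therefore
\[
  \nabla\eta(\mathbf U_j^{n+1})\cdot\mathbf R(\mathbf U_j^{n+1})
  =
  -(v_j^{n+1}-au_j^{n+1})^2,
\]
and the linear term equals \(\frac{\dt}{\varepsilon}(v_j^{n+1}-au_j^{n+1})^2\). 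For the quadratic term, \(H_{22}=1\) gives \(\frac12\delta\mathbf U^\top H\delta\mathbf U=\frac{\dt^2}{2\varepsilon^2}(v_j^{n+1}-au_j^{n+1})^2\). Thus
\[
  \eta_j^{n+\frac12}
  =
  \eta_j^{n+1}
  +
  \frac{\dt}{\varepsilon}(v_j^{n+1}-au_j^{n+1})^2
  +
  \frac{\dt^2}{2\varepsilon^2}(v_j^{n+1}-au_j^{n+1})^2.
\]
Substituting this expression for \(\eta_j^{n+\frac12}\) into the convective identity gives \eqref{eq:full-disc-eta-identity}. No CFL condition is used, since the statement is an exact algebraic identity.
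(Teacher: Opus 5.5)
Your proposal is correct and follows the paper's proof essentially step by step. You expand $\eta$ around $\mathbf U_j^n$ for the convective step and split $\Delta\mathbf F_j^n$ into its centered part, which gives the flux difference by symmetry of $HA$, and its viscous part, which gives $R_j^{1,n}+R_j^{2,n}$; you then expand around the new state $\mathbf U_j^{n+1}$ for the implicit source step, and your explicit expansion of $R_j^{1,n}+R_j^{2,n}$ supplies the bookkeeping that the paper only asserts.
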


\begin{proof}
We first consider the convective step. By
\eqref{eq:conv-step} and \eqref{eq:DeltaF-def},
\[
  \mathbf U_j^{n+\frac12}
  =
  \mathbf U_j^n
  -
  \sig\Delta\mathbf F_j^n.
\]
Since the entropy is quadratic, identity
\eqref{eq:eta-identity-local-app} gives
\begin{equation}
\label{eq:eta-increment}
  \eta_j^{n+\frac12}-\eta_j^n
  =
  -\sig
  \nabla\eta(\mathbf U_j^n)
  \cdot\Delta\mathbf F_j^n
  +
  \frac{\sig^2}{2}
  (\Delta\mathbf F_j^n)^\top
  H\Delta\mathbf F_j^n.
\end{equation}
The last term in \eqref{eq:eta-increment} is
\(\frac{\sig^2}{2}R_j^{3,n}\).

Using the Rusanov flux \eqref{eq:HLL-flux}, we obtain
\begin{equation}
\label{eq:DeltaF-Rusanov}
  \Delta\mathbf F_j^n
  =
  \frac12
  \left(
    \mathbf F(\mathbf U_{j+1}^n)
    -
    \mathbf F(\mathbf U_{j-1}^n)
  \right)
  -
  \frac{\lambda}{2}
  \left(
    \mathbf U_{j+1}^n
    -
    2\mathbf U_j^n
    +
    \mathbf U_{j-1}^n
  \right).
\end{equation}
Since
\(\nabla\eta(\mathbf U)=H\mathbf U\),
\(\mathbf F(\mathbf U)=A\mathbf U\), and \(HA\) is symmetric, the first term
on the right-hand side of \eqref{eq:DeltaF-Rusanov} gives
\[
  \Psi_{j+\frac12}^n-\Psi_{j-\frac12}^n.
\]
Expanding the second term gives
\[
\begin{aligned}
  -\frac{\lambda}{2}
  (\mathbf U_j^n)^\top H
  \left(
    \mathbf U_{j+1}^n
    -
    2\mathbf U_j^n
    +
    \mathbf U_{j-1}^n
  \right)=
  R_j^{1,n}+R_j^{2,n}.
\end{aligned}
\]
Therefore,
\[
  \nabla\eta(\mathbf U_j^n)
  \cdot\Delta\mathbf F_j^n
  =
  \Psi_{j+\frac12}^n-\Psi_{j-\frac12}^n
  +
  R_j^{1,n}
  +
  R_j^{2,n}.
\]
Substituting this expression into \eqref{eq:eta-increment} yields
\begin{equation}
\label{eq:disc-eta-identity-local}
  \eta_j^{n+\frac12}
  +
  \sig
  \left(
    \Psi_{j+\frac12}^n-\Psi_{j-\frac12}^n
  \right)
  =
  \eta_j^n
  -
  \sig R_j^{1,n}
  -
  \sig R_j^{2,n}
  +
  \frac{\sig^2}{2}R_j^{3,n}.
\end{equation}

We now consider the implicit relaxation step. From
\eqref{eq:source-step},
\[
  \mathbf U_j^{n+\frac12}-\mathbf U_j^{n+1}
  =
  -\frac{\dt}{\varepsilon}
  \mathbf R(\mathbf U_j^{n+1}).
\]
Applying the quadratic identity \eqref{eq:eta-identity-local-app} at
\(\mathbf U_j^{n+1}\) gives
\[
\begin{aligned}
  \eta_j^{n+\frac12}
  =
  \eta_j^{n+1}
  +
  \frac{\dt}{\varepsilon}
  \bigl(v_j^{n+1}-au_j^{n+1}\bigr)^2
  +
  \frac{\dt^2}{2\varepsilon^2}
  \bigl(v_j^{n+1}-au_j^{n+1}\bigr)^2,
\end{aligned}
\]
where we used
\[
  \nabla\eta(\mathbf U)\cdot\mathbf R(\mathbf U)
  =
  -(v-au)^2,
  \qquad
  \mathbf R(\mathbf U)^\top H\mathbf R(\mathbf U)
  =
  (v-au)^2.
\]
Combining this identity with
\eqref{eq:disc-eta-identity-local} proves
\eqref{eq:full-disc-eta-identity}.
\end{proof}

\begin{Rem}
Under the subcharacteristic condition \eqref{eq:subchar-final}, the matrix
\(H\) is positive definite. Consequently,
\[
  R_j^{1,n}\geq0,
  \qquad
  R_j^{3,n}\geq0.
\]
The last two terms in \eqref{eq:full-disc-eta-identity} represent the entropy
dissipation induced by the implicit relaxation step.
\end{Rem}
%%%%%%%%%%%%%%%%%%%%%%%%%%%%%%%%%%%%%%%%%%%%
%%%%%%%%%%%%%%%%%%%%%%%%%%%%%%%%%%%%%%%%%%%%%%%%%%%%%%%%%%%%%%%%%%%%%
\subsection{Discrete entropy identity at equilibrium}
\label{sec:disc-entropy-equilibrium}

We now discretize the limiting transport equation
\eqref{eq:limiting-transport} and lift the resulting scalar approximation onto
the equilibrium manifold.
We consider the finite volume scheme
\begin{equation}
\label{eq:limit-scheme}
  \bar u_j^{n+1}
  =
  \bar u_j^n
  -
  \sig
  \left(
    \widehat f_{j+\frac12}^n
    -
    \widehat f_{j-\frac12}^n
  \right),
\end{equation}
where
\begin{equation}
\label{eq:rusanov-flux-scalar}
  \widehat f_{j+\frac12}^n
  :=
  \frac a2
  \left(
    \bar u_j^n+\bar u_{j+1}^n
  \right)
  -
  \frac{\lambda}{2}
  \left(
    \bar u_{j+1}^n-\bar u_j^n
  \right)
\end{equation}
is the Rusanov flux with numerical speed \(\lambda\).

The corresponding lifted equilibrium state is
\begin{equation}
\label{eq:disc-eq-manifold}
  \bar{\mathbf U}_j^n
  :=
  \begin{pmatrix}
    \bar u_j^n\\[2pt]
    a\bar u_j^n
  \end{pmatrix}
  \in\mathcal M_{\mathrm{eq}}.
\end{equation}
Thus, the discrete reference solution remains on the equilibrium manifold for
every \(j\in\mathbb Z\) and \(n\in\mathbb N\).

The scalar scheme
\eqref{eq:limit-scheme}-\eqref{eq:rusanov-flux-scalar} is obtained formally
from the Lie splitting scheme in the limit \(\varepsilon\to0\). The numerical
speed therefore remains equal to \(\lambda\), although the characteristic
speed of the limiting transport equation is \(a\). Accordingly, the CFL
condition remains \(\lambda\sig\leq1\).

On the equilibrium manifold, the entropy becomes
\begin{equation}
\label{eq:entropie-discrete-equilibre}
  \bar\eta_j^n
  :=
  \eta(\bar{\mathbf U}_j^n)
  =
  \frac{\lambda^2-a^2}{2}
  (\bar u_j^n)^2.
\end{equation}
The corresponding numerical entropy flux is obtained by evaluating
\eqref{eq:qhat-matrix} at the equilibrium states:
\begin{equation}
\label{eq:barPsi-explicit}
  \bar\Psi_{j+\frac12}^n
  :=
  \frac{a(\lambda^2-a^2)}{2}
  \bar u_j^n\bar u_{j+1}^n.
\end{equation}

We define
\begin{equation}
\label{eq:delta-f-equilibrium}
  \Delta\widehat f_j^n
  :=
  \widehat f_{j+\frac12}^n
  -
  \widehat f_{j-\frac12}^n.
\end{equation}
The corresponding lifted flux difference is
\begin{equation}
\label{eq:flux-equilibre}
  \Delta\bar{\mathbf F}_j^n
  :=
  \begin{pmatrix}
    \Delta\widehat f_j^n\\[2pt]
    a\Delta\widehat f_j^n
  \end{pmatrix}.
\end{equation}
Consequently,
\[
  \bar{\mathbf U}_j^{n+1}
  =
  \bar{\mathbf U}_j^n
  -
  \sig\Delta\bar{\mathbf F}_j^n.
\]

We introduce
\begin{align}
  \bar R_j^{1,n}
  &:=
  \frac{\lambda(\lambda^2-a^2)}{4}
  \left[
    \left(
      \bar u_{j+1}^n-\bar u_j^n
    \right)^2
    +
    \left(
      \bar u_j^n-\bar u_{j-1}^n
    \right)^2
  \right],
  \label{eq:barDi-def}\\[0.4em]
  \bar R_j^{2,n}
  &:=
  -\frac{\lambda(\lambda^2-a^2)}{4}
  \left[
    (\bar u_{j+1}^n)^2
    -
    2(\bar u_j^n)^2
    +
    (\bar u_{j-1}^n)^2
  \right],
  \label{eq:barRi-def}\\[0.4em]
  \bar R_j^{3,n}
  &:=
  (\lambda^2-a^2)
  \left(
    \Delta\widehat f_j^n
  \right)^2.
  \label{eq:barXi-def}
\end{align}

\begin{Prop}[Discrete entropy identity for the limiting scheme]
\label{prop:disc-entropy-equilibrium}
Let
\((\bar u_j^n)_{j\in\mathbb Z,n\in\mathbb N}\)
be the solution of \eqref{eq:limit-scheme}. Then
\begin{equation}
\label{eq:disc-eta-eq-identity}
\begin{aligned}
  \bar\eta_j^{n+1}
  +
  \sig
  \left(
    \bar\Psi_{j+\frac12}^n
    -
    \bar\Psi_{j-\frac12}^n
  \right)
  &=
  \bar\eta_j^n
  -
  \sig\bar R_j^{1,n}
  -
  \sig\bar R_j^{2,n}
  +
  \frac{\sig^2}{2}
  \bar R_j^{3,n}.
\end{aligned}
\end{equation}
\end{Prop}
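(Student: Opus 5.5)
The plan is to repeat the convective-step argument in the proof of Theorem~\ref{thm:full-disc-entropy}, now for the lifted update $\bar{\mathbf U}_j^{n+1}=\bar{\mathbf U}_j^n-\sig\Delta\bar{\mathbf F}_j^n$. There is no relaxation step here, so no dissipation terms in $\varepsilon$ will appear.

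\textbf{Quadratic expansion.} Since $\eta$ is quadratic, the identity \eqref{eq:eta-identity-local-app} with $\delta\mathbf U=-\sig\Delta\bar{\mathbf F}_j^n$ gives
\[
\bar\eta_j^{n+1}-\bar\eta_j^n=-\sig\,(H\bar{\mathbf U}_j^n)\cdot\Delta\bar{\mathbf F}_j^n+\frac{\sig^2}{2}(\Delta\bar{\mathbf F}_j^n)^\top H\Delta\bar{\mathbf F}_j^n .
\]
For any equilibrium vector $(w,aw)^\top$ one has $(w,aw)H(w,aw)^\top=(\lambda^2-2a^2+a^2)w^2=(\lambda^2-a^2)w^2$. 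Applied to $w=\Delta\widehat f_j^n$, the last term therefore equals $\frac{\sig^2}{2}\bar R_j^{3,n}$. In the same way, $H\bar{\mathbf U}_j^n=((\lambda^2-a^2)\bar u_j^n,0)^\top$, so the first term reduces to the scalar quantity $-\sig(\lambda^2-a^2)\bar u_j^n\Delta\widehat f_j^n$.

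\textbf{Splitting the flux difference.} From \eqref{eq:rusanov-flux-scalar},
\[
\Delta\widehat f_j^n=\frac a2(\bar u_{j+1}^n-\bar u_{j-1}^n)-\frac\lambda2(\bar u_{j+1}^n-2\bar u_j^n+\bar u_{j-1}^n).
\]
Multiplying the centered part by $(\lambda^2-a^2)\bar u_j^n$ gives $\frac{a(\lambda^2-a^2)}{2}(\bar u_j^n\bar u_{j+1}^n-\bar u_{j-1}^n\bar u_j^n)$, which is exactly $\bar\Psi_{j+\frac12}^n-\bar\Psi_{j-\frac12}^n$ by \eqref{eq:barPsi-explicit}. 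For the diffusive part we use the elementary identity
\[
-2b(c-2b+d)=(c-b)^2+(b-d)^2-(c^2-2b^2+d^2),
\]
with $b=\bar u_j^n$, $c=\bar u_{j+1}^n$, $d=\bar u_{j-1}^n$. After multiplication by $\frac{\lambda(\lambda^2-a^2)}{4}$, this gives
\[
-\frac\lambda2(\lambda^2-a^2)\bar u_j^n(\bar u_{j+1}^n-2\bar u_j^n+\bar u_{j-1}^n)=\bar R_j^{1,n}+\bar R_j^{2,n}.
\]
This is the scalar analogue of the step $R_j^{1,n}+R_j^{2,n}$ in the proof of Theorem~\ref{thm:full-disc-entropy}.

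\textbf{Conclusion.} Substituting these two pieces into the expansion gives \eqref{eq:disc-eta-eq-identity}.

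An alternative route is to specialize the convective part \eqref{eq:disc-eta-identity-local} of Theorem~\ref{thm:full-disc-entropy} to equilibrium data. This works only after checking that $\Delta\mathbf F_j^n$, evaluated at $\bar{\mathbf U}$, has second component $a\Delta\widehat f_j^n+(\lambda^2-a^2)\cdot\frac12(\bar u_{j+1}^n-\bar u_{j-1}^n)$. It therefore does not coincide with $\Delta\bar{\mathbf F}_j^n$, so the direct computation above is the cleaner route.

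The only real obstacle is the bookkeeping in the second step, namely verifying the sign conventions of $\bar R_j^{2,n}$. The elementary identity settles this.
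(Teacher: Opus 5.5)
Your proof is correct and follows the same route as the paper. You expand the quadratic reduced entropy along $\bar u_j^{n+1}=\bar u_j^n-\sigma\Delta\widehat f_j^n$, identify the quadratic term as $\frac{\sigma^2}{2}\bar R_j^{3,n}$, and split $(\lambda^2-a^2)\bar u_j^n\Delta\widehat f_j^n$ into a centered part, giving $\bar\Psi_{j+\frac12}^n-\bar\Psi_{j-\frac12}^n$, and a diffusive part, giving $\bar R_j^{1,n}+\bar R_j^{2,n}$; this also carries out, via your elementary identity (which is correct), the ``direct calculation'' the paper leaves to the reader.
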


\begin{proof}
Using \eqref{eq:limit-scheme} and the quadratic form of the reduced entropy,
we obtain
\[
\begin{aligned}
  \bar\eta_j^{n+1}-\bar\eta_j^n
  =
  -\sig
  (\lambda^2-a^2)
  \bar u_j^n
  \Delta\widehat f_j^n
  +
  \frac{\sig^2}{2}
  (\lambda^2-a^2)
  \left(
    \Delta\widehat f_j^n
  \right)^2.
\end{aligned}
\]
The last term is
\(\frac{\sig^2}{2}\bar R_j^{3,n}\).

Using the scalar Rusanov flux
\eqref{eq:rusanov-flux-scalar}, a direct calculation gives
\[
\begin{aligned}
  (\lambda^2-a^2)
  \bar u_j^n
  \Delta\widehat f_j^n
  &=
  \bar\Psi_{j+\frac12}^n
  -
  \bar\Psi_{j-\frac12}^n
  +
  \bar R_j^{1,n}
  +
  \bar R_j^{2,n}.
\end{aligned}
\]
Substituting this expression into the entropy increment gives
\eqref{eq:disc-eta-eq-identity}.
\end{proof}

Under the subcharacteristic condition
\eqref{eq:subchar-final},
\[
  \bar R_j^{1,n}\geq0,
  \qquad
  \bar R_j^{3,n}\geq0.
\]
%%%%%%%%%%%%%%%%%%%%%%%%%%%%%%%%%%%%%%%%%%%%%%%%%%%%%%%%%%%%%%%%%%%
%==========================================================
% Discrete relative entropy
%==========================================================
\subsection{Discrete relative entropy for the Lie splitting scheme}
\label{sec:disc-rel-entropy-JX}

We derive a fully discrete relative entropy inequality for the Lie splitting
scheme. Unlike the continuous analysis of Section~\ref{sec:relative-entropy},
both states in the comparison are discrete: the relaxation solution is
compared with the solution of the limiting scheme lifted onto the equilibrium
manifold.\\
For a scalar sequence \((\bar u_j^n)\), we define the discrete interface
derivative by
\begin{equation}
\label{eq:discrete-derivation}
  (\tilde\delta_x\bar u)_{j+\frac12}^n
  :=
  \frac{\bar u_{j+1}^n-\bar u_j^n}{\dx}.
\end{equation}
Under the subcharacteristic condition \eqref{eq:subchar-final}, the matrix
\(H\) is positive definite. We therefore use the associated norm
\begin{equation}
\label{eq:norme-H}
  \|\mathbf X\|_H^2
  :=
  \mathbf X^\top H\mathbf X,
  \qquad \mathbf X\in\mathbb R^2.
\end{equation}
%%%%%%%%%%%%%%%%%%%%%%%%%%%%%%%%%%%%%%%%%%%%%%%%%%%%%%%%%%%%%%%%%%%%%%%%%%%
For all \(j\in\mathbb Z\) and \(n\in\mathbb N\), we set
\begin{equation}
\label{eq:DeltaU-def-rel}
  \Delta\mathbf U_j^n
  :=
  \mathbf U_j^n-\bar{\mathbf U}_j^n
  =
  \begin{pmatrix}
    \Delta u_j^n\\[2pt]
    \Delta v_j^n
  \end{pmatrix},
  \qquad
  \Delta u_j^n:=u_j^n-\bar u_j^n,
  \qquad
  \Delta v_j^n:=v_j^n-\bar v_j^n.
\end{equation}
We also introduce the nonequilibrium variable
\begin{equation}
\label{eq:w-def-disc}
  w_j^n:=v_j^n-a u_j^n.
\end{equation}
Since \(\bar v_j^n=a\bar u_j^n\), one has
\begin{equation}
\label{eq:delta-v-w-du}
  \Delta v_j^n=w_j^n+a\Delta u_j^n.
\end{equation}
The discrete relative entropy is defined by
\begin{equation}
\label{eq:eta-rel-quadratic-disc}
  \eta_{j,\mathrm{rel}}^{n}
  :=
  \eta(\mathbf U_j^n)
  -
  \eta(\bar{\mathbf U}_j^n)
  -
  \nabla\eta(\bar{\mathbf U}_j^n)
  \cdot
  \big(
    \mathbf U_j^n-\bar{\mathbf U}_j^n
  \big).
\end{equation}
Since \(\eta\) is quadratic, this reduces to
\[
  \eta_{j,\mathrm{rel}}^{n}
  =
  \frac12
  (\Delta\mathbf U_j^n)^\top H\Delta\mathbf U_j^n
  =
  \frac12
  \|\Delta\mathbf U_j^n\|_H^2.
\]
We then introduce the global discrete relative entropy functional
\begin{equation}
\label{eq:phi-discret}
  \Phi_{\mathrm{rel}}^n
  :=
  \dx
  \sum_{j\in\mathbb Z}
  \eta_{j,\mathrm{rel}}^n
  =
  \frac{\dx}{2}
  \sum_{j\in\mathbb Z}
  \|\Delta\mathbf U_j^n\|_H^2.
\end{equation}
It is nonnegative under the subcharacteristic condition
\eqref{eq:subchar-final}.
The corresponding discrete relative entropy flux is defined by
\begin{equation}
\label{eq:Psi-rel-tzavaras}
  \Psi_{j+\frac12,\mathrm{rel}}^{n}
  :=
  \frac12
  (\Delta\mathbf U_j^n)^\top
  H A
  \Delta\mathbf U_{j+1}^n.
\end{equation}
%%%%%%%%%%%%%%%%%%%%%%%%%%%%%%%%%%%%%%%%%%%%%%%%%%%%%%%%%%%%%%%%%%%%%%%%%%%
We now establish the fully discrete counterpart of
Proposition~\ref{prop:relative-entropy-JX}. The proof combines an exact local
relative entropy identity, spatial cancellations, and estimates of the
remaining coupling terms. It follows the relative entropy strategy for
relaxation limits, adapted to the fully discrete setting
\cite{BerthonBessemoulinMathis16,BessemoulinMathis24,
BouchutBook04,BulteauBerthonBessemoulin19,DafermosBook10,
LattanzioTzavaras12,Tadmor2003}.

\begin{Thm}[Discrete relative entropy inequality]
\label{thm:global-disc-rel-entropy}
Let $(\mathbf U_j^n)_{j\in\mathbb Z,n\in\mathbb N}$ be the solution generated
by the Lie splitting scheme
\eqref{eq:conv-step}-\eqref{eq:source-step} for the Jin-Xin model, and let
$(\bar u_j^n)_{j\in\mathbb Z,n\in\mathbb N}$ be the solution of the limiting
scheme \eqref{eq:limit-scheme}. Set
\[
  \bar{\mathbf U}_j^n
  :=
  (\bar u_j^n,a\bar u_j^n)^\top,
\]
and let \(w_j^n\) be defined by \eqref{eq:w-def-disc}.

Assume that:
\begin{enumerate}[
  label=\textnormal{(H\arabic*)},
  ref=\textnormal{(H\arabic*)},
  start=0,
  leftmargin=*
]
  \item \label{ass:split-H0}
  The initial relative entropy is finite:
  \[
    \Phi_{\mathrm{rel}}^0<+\infty.
  \]

  \item \label{ass:split-H1}
  There exist two constant equilibrium states $\mathbf U_\pm$ such that, for
  every $n\in\mathbb N$,
  \begin{equation}
  \label{eq:states-at-infinity-disc}
    \lim_{j\to\pm\infty}\mathbf U_j^n=\mathbf U_\pm,
    \qquad
    \lim_{j\to\pm\infty}\bar{\mathbf U}_j^n=\mathbf U_\pm.
  \end{equation}

  \item \label{ass:split-H2}
  There exists a constant $K>0$ such that, for every $N$ satisfying
  $N\Delta t\leq T$,
  \begin{equation}
  \label{eq:H2-discrete-gradient-bound}
    \sum_{n=0}^{N-1}\Delta t\,\Delta x
    \sum_{j\in\mathbb Z}
    \left(
      (\tilde\delta_x\bar u)_{j+\frac12}^n
    \right)^2
    \leq K^2.
  \end{equation}

  \item \label{ass:split-H3}
  There exists a sufficiently small constant
  $\theta_0=\theta_0(a,\lambda)\in(0,1)$ such that the strengthened CFL
  condition
  \begin{equation}
  \label{eq:cfl-renforcee}
    \lambda\sigma\leq\theta_0
  \end{equation}
  holds.
\end{enumerate}

Then, for every $N$ satisfying $N\Delta t\leq T$, one has
\begin{equation}
\label{eq:global-disc-rel-entropy-at-N}
\begin{aligned}
  \Phi_{\mathrm{rel}}^{N}
  \leq\,
  \Phi_{\mathrm{rel}}^{0}
  &+
  \frac{\Delta t}{2\varepsilon}\,
  \Delta x\sum_{j\in\mathbb Z}(w_j^{0})^2
  +
  \frac{\Delta t^2}{2\varepsilon^2}\,
  \Delta x\sum_{j\in\mathbb Z}(w_j^{0})^2 +
  \frac{\varepsilon(\lambda^2-a^2)^2}{2}\,K^2.
\end{aligned}
\end{equation}
\end{Thm}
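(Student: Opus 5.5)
The plan is to work entirely with the error $\Delta\mathbf U_j^n=\mathbf U_j^n-\bar{\mathbf U}_j^n$. Because the scheme is linear and $\eta$ is quadratic, the computation of Theorem~\ref{thm:full-disc-entropy} can be repeated for $\Delta\mathbf U$, with one extra forcing term coming from the fact that the lifted limiting scheme is not exactly the relaxation scheme on $\mathcal M_{\mathrm{eq}}$.

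\textbf{Step 1: the error scheme.} Evaluate the Rusanov flux \eqref{eq:HLL-flux} on lifted states $(\bar u,a\bar u)$. Its first component coincides with $\widehat f$. Its second component differs from $a\widehat f$ by $\frac{\lambda^2-a^2}{2}(\bar u_j^n+\bar u_{j+1}^n)$ at each interface. Writing $\Delta\mathbf F(\cdot)$ for the Rusanov flux difference \eqref{eq:DeltaF-Rusanov} applied to a grid function, this should give
\[
  \Delta\mathbf U_j^{n+\frac12}
  :=\mathbf U_j^{n+\frac12}-\bar{\mathbf U}_j^{n+1}
  =\Delta\mathbf U_j^n-\sig\,\Delta\mathbf F(\Delta\mathbf U^n)_j-\sig\,\mathbf S_j^n,
\]
\[
  \mathbf S_j^n=\Bigl(0,\ \tfrac{\lambda^2-a^2}{2}(\bar u_{j+1}^n-\bar u_{j-1}^n)\Bigr)^\top .
\]
The implicit step then gives
\[
  \Delta\mathbf U_j^{n+1}=\Delta\mathbf U_j^{n+\frac12}-\frac{\dt}{\varepsilon}\bigl(0,w_j^{n+1}\bigr)^\top,
\]
where I use $\bar{\mathbf U}\in\mathcal M_{\mathrm{eq}}$, so that $\mathbf R(\mathbf U)=\mathbf R(\Delta\mathbf U)$.

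\textbf{Step 2: local identity.} Expand $\frac12\|\Delta\mathbf U_j^{n+1}\|_H^2$ exactly as in the proof of Theorem~\ref{thm:full-disc-entropy}. This produces the flux difference $\sig(\Psi^n_{j+\frac12,\mathrm{rel}}-\Psi^n_{j-\frac12,\mathrm{rel}})$ and the analogues of $R^{1}$ and $R^{2}$ for $\Delta\mathbf U$. It also produces the relaxation dissipation $-(\frac{\dt}{\varepsilon}+\frac{\dt^2}{2\varepsilon^2})(w_j^{n+1})^2$. Two new contributions appear:
\begin{itemize}
  \item the linear coupling $-\sig(H\Delta\mathbf U_j^n)\cdot\mathbf S_j^n$, which equals $-\dt(\lambda^2-a^2)\,w_j^n\,\frac{\bar u_{j+1}^n-\bar u_{j-1}^n}{2\dx}$, since the second component of $H\Delta\mathbf U$ is $w$ by \eqref{eq:delta-v-w-du};
  \item the quadratic term $\frac{\sig^2}{2}\|\Delta\mathbf F(\Delta\mathbf U^n)_j+\mathbf S_j^n\|_H^2$.
\end{itemize}

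\textbf{Step 3: summation.} Multiply by $\dx$ and sum over $j$. The relative entropy flux telescopes by \ref{ass:split-H1}, and $\sum_j R^{2}$ vanishes for the same reason. The coupling term is the discrete counterpart of the one in Proposition~\ref{prop:relative-entropy-JX}, so I treat it by Young's inequality:
\[
  \dt(\lambda^2-a^2)|w_j^n|\,|D\bar u_j^n|
  \le \frac{\dt}{2\varepsilon}(w_j^n)^2+\frac{\varepsilon\dt(\lambda^2-a^2)^2}{2}|D\bar u_j^n|^2,
\]
where $D\bar u_j^n$ is the centered quotient. Since it is the average of two interface derivatives, $|D\bar u_j^n|^2\le\frac12\bigl[((\tilde\delta_x\bar u)^n_{j+\frac12})^2+((\tilde\delta_x\bar u)^n_{j-\frac12})^2\bigr]$, and summing over $n$ gives the $\frac{\varepsilon(\lambda^2-a^2)^2}{2}K^2$ term through \ref{ass:split-H2}.

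\textbf{Step 4: index mismatch.} The coupling involves $w^n$, while the dissipation at step $n$ involves $w^{n+1}$. I therefore sum from $n=0$ to $N-1$ and shift the index. For $n\ge1$, the dissipation produced at step $n-1$ absorbs the terms in $w^n$. The $n=0$ terms are not absorbed and appear explicitly as $\frac{\dt}{2\varepsilon}\dx\sum_j(w_j^0)^2$. The $\frac{\dt^2}{2\varepsilon^2}$ dissipation is kept in reserve to absorb the $w$-parts of the quadratic remainders; the term $\frac{\dt^2}{2\varepsilon^2}\dx\sum_j(w_j^0)^2$ in \eqref{eq:global-disc-rel-entropy-at-N} is its boundary contribution at $n=0$.

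\textbf{Step 5: main obstacle, the $O(\sig^2)$ terms.} Split $\|\Delta\mathbf F(\Delta\mathbf U)+\mathbf S\|_H^2$ into three pieces.
\begin{itemize}
  \item The pure $\Delta\mathbf U$ part is bounded by $C\lambda^2$ times the squared jumps of $\Delta\mathbf U$, with $C$ depending only on $a$ and $\lambda$. It is absorbed by $\sig\sum_jR^{1}(\Delta\mathbf U)$ once $\lambda\sig\le\theta_0$; this is where \ref{ass:split-H3} is needed.
  \item The cross term between $\Delta\mathbf F(\Delta\mathbf U)$ and $\mathbf S$ goes through Young's inequality into the same numerical diffusion plus an $\mathbf S$-squared term.
  \item The $\mathbf S$-squared part is of size $\dt^2(\lambda^2-a^2)^2|D\bar u|^2$, which is not obviously $O(\varepsilon)$.
\end{itemize}
For the last piece, I would first rewrite $\mathbf S$ in terms of $w$ and $\Delta u$ using the structure of $H$: only the second component of $\mathbf S$ is nonzero, and $H_{22}=1$. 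I would then try to absorb it by combining the numerical diffusion with the reserved $\frac{\dt^2}{2\varepsilon^2}(w)^2$ dissipation, using Young's inequality with weights chosen to keep the constant $\frac{\varepsilon(\lambda^2-a^2)^2}{2}$. This is the step I expect to require the most care. If the direct absorption fails, I would instead expand the relative entropy around the intermediate state $\Delta\mathbf U^{n+\frac12}$ so that the coupling is paired directly with the implicit dissipation. After these absorptions, summing in $n$ yields \eqref{eq:global-disc-rel-entropy-at-N}.
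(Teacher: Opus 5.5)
\textbf{Where the proposal breaks.} Your Steps 1--3 are correct. Step 1 is in fact more careful than the paper. The lifted Rusanov flux differs from $(\widehat f,a\widehat f)^\top$ in its second component, so $\Delta\mathbf F_{j,\mathrm{rel}}^n=\Delta\mathbf F(\Delta\mathbf U^n)_j+\mathbf S_j^n$, whereas the paper's decomposition \eqref{eq:DFrel-exact} omits $\mathbf S_j^n$. You therefore cannot close Step 5 by borrowing the paper's absorption \eqref{eq:sumDFrel-to-R1rel}. Write $D\bar u_j^n=\frac{\bar u_{j+1}^n-\bar u_{j-1}^n}{2\Delta x}$ and $\|z\|_{\Delta x}^2=\Delta x\sum_j z_j^2$. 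With $\mathbf S$ restored and $\Delta\mathbf U^n=0$, the left-hand side of \eqref{eq:convective-combination-est} equals $\frac{\Delta t^2(\lambda^2-a^2)^2}{2}\|D\bar u^n\|_{\Delta x}^2$, which exceeds its right-hand side when $\varepsilon\ll\Delta t$ and $\bar u^n$ is smooth and nonconstant.

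This $\mathbf S$-squared term is the genuine gap in your plan, because it is a pure source: it does not involve $\Delta\mathbf U^n$. No choice of Young weights lets the numerical diffusion of $\Delta\mathbf U^n$, or any dissipation in $w^n$, absorb it, since both vanish when $\Delta\mathbf U^n=0$ (for instance at $n=0$ for well-prepared data). Also, $\mathbf S$ can be expressed through $w$ only via the scheme, that is, through $w^{n+1}$.

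Step 4, however, reserves $\frac{\Delta t^2}{2\varepsilon^2}\|w^{n+1}\|_{\Delta x}^2$ for the next step. This amounts to propagating the functional $\mathfrak S^n$ of \eqref{eq:def-short-Sn-ourcase}, and that functional is not monotone up to $O(\varepsilon)$. If $\Delta\mathbf U^n=0$, then $\Delta u^{n+1}=0$ and $w^{n+1}=-\rho\,\Delta t(\lambda^2-a^2)D\bar u^n$ with $\rho=\varepsilon/(\varepsilon+\Delta t)$. Hence $\mathfrak S^{n+1}=\frac{\varepsilon^2+\varepsilon\Delta t+\Delta t^2}{2(\varepsilon+\Delta t)^2}\,\Delta t^2(\lambda^2-a^2)^2\|D\bar u^n\|_{\Delta x}^2\ge\frac38\Delta t^2(\lambda^2-a^2)^2\|D\bar u^n\|_{\Delta x}^2$, which is of order $\Delta t^2$ rather than $\varepsilon\Delta t$ when $\varepsilon\ll\Delta t$.

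\textbf{How to close it.} Your fallback works if you carry it out so that the coupling, the $\mathbf S$-term and the implicit dissipation are handled in one step, with no index shift. Let $\mathbf Z_j^n=(Z_{j,1}^n,Z_{j,2}^n)^\top:=\Delta\mathbf U_j^n-\sigma\,\Delta\mathbf F(\Delta\mathbf U^n)_j$ and $z_j^n:=Z_{j,2}^n-aZ_{j,1}^n$. The source $\mathbf S$ acts only on the second component, and the implicit step multiplies the nonequilibrium part by $\rho$. Hence $\Delta u_j^{n+1}=Z_{j,1}^n$ and $w_j^{n+1}=\rho\bigl(z_j^n-\Delta t(\lambda^2-a^2)D\bar u_j^n\bigr)$. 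Using $\|\mathbf X\|_H^2=(\lambda^2-a^2)X_1^2+(X_2-aX_1)^2$, this gives
\[
\Phi_{\mathrm{rel}}^{n+1}=\frac{\Delta x}{2}\sum_{j\in\mathbb Z}\Bigl[(\lambda^2-a^2)(Z_{j,1}^n)^2+\rho^2\bigl(z_j^n-\Delta t(\lambda^2-a^2)D\bar u_j^n\bigr)^2\Bigr].
\]
Young's inequality with weight $\rho^{-2}-1$ gives $\rho^2(x-y)^2\le x^2+\frac{\rho^2}{1-\rho^2}y^2$, and $\frac{\rho^2}{1-\rho^2}\Delta t=\frac{\varepsilon^2}{2\varepsilon+\Delta t}\le\frac{\varepsilon}{2}$.

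The homogeneous Rusanov step is $H$-nonexpansive: $\Delta x\sum_j\|\mathbf Z_j^n\|_H^2\le2\Phi_{\mathrm{rel}}^n$. To see this, apply the convective part of Theorem~\ref{thm:full-disc-entropy} to $\Delta\mathbf U^n$ and use \eqref{eq:sumDFrel-to-R1rel}, which is valid for $\Delta\mathbf F(\Delta\mathbf U^n)$, under \ref{ass:split-H3}. In fact $\lambda\sigma\le1$ suffices, since the step is upwind in the $H$-orthogonal characteristic variables.

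Therefore $\Phi_{\mathrm{rel}}^{n+1}\le\Phi_{\mathrm{rel}}^n+\frac{\varepsilon(\lambda^2-a^2)^2}{4}\Delta t\,\|D\bar u^n\|_{\Delta x}^2$. Since $D\bar u_j^n$ averages two interface quotients, $\|D\bar u^n\|_{\Delta x}^2\le\Delta x\sum_j\bigl((\tilde\delta_x\bar u)_{j+\frac12}^n\bigr)^2$. Summing in $n$ and using \ref{ass:split-H2} gives \eqref{eq:global-disc-rel-entropy-at-N}, even without the $w^0$ terms.
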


The proof of Theorem~\ref{thm:global-disc-rel-entropy} relies on auxiliary
lemmas establishing the local relative entropy identity and estimating the
remaining coupling terms.
%%%%%%%%%%%%%%%%%%%%%%%%%%%%%%%%%%%%%
%-
\begin{Lem}[Exact local discrete relative entropy identity]
\label{lem:disc-rel-entropy-local-exact}

Let $\bar{\mathbf U}_j^n=(\bar u_j^n,a\bar u_j^n)^\top\in\mathcal M_{\rm eq}$
for all $j\in\mathbb Z$ and $n\in\mathbb N$, and assume that it satisfies the
discrete entropy identity at equilibrium \eqref{eq:disc-eta-eq-identity}. Then,
for all $j\in\mathbb Z$ and $n\in\mathbb N$,
\begin{equation}\label{eq:disc-rel-entropy-local-identity-exact}
\begin{aligned}
  \eta_{j,\mathrm{rel}}^{n+1}
  &+
  \sigma\Big(
      \Psi_{j+\frac12,\mathrm{rel}}^{n}
      -
      \Psi_{j-\frac12,\mathrm{rel}}^{n}
    \Big)
  +
  \sigma\Big(
      \Gamma_{j+\frac12}^{n}
      -
      \Gamma_{j-\frac12}^{n}
    \Big)\\
  &=
  \eta_{j,\mathrm{rel}}^{n}
  -
  \sigma\big(R_j^{1,n}-\bar R_j^{1,n}\big)
  -
  \sigma\big(R_j^{2,n}-\bar R_j^{2,n}\big)
  +
  \frac{\sigma^2}{2}\big(R_j^{3,n}-\bar R_j^{3,n}\big)
  -
  R_j^{4,n}\\
  &\quad
  -
  \frac{\dt}{\varepsilon}
  \big(\Delta v_j^{n+1}-a\,\Delta u_j^{n+1}\big)^2
  -
  \frac{\dt^2}{2\varepsilon^2}
  \big(\Delta v_j^{n+1}-a\,\Delta u_j^{n+1}\big)^2,
\end{aligned}
\end{equation}
where
\begin{equation}\label{eq:R4-def-local-v2}
  R_j^{4,n}
  :=
  \nabla\eta(\bar{\mathbf U}_j^{n+1})\cdot\Delta\mathbf U_j^{n+1}
  -
  \nabla\eta(\bar{\mathbf U}_j^{n})\cdot\Delta\mathbf U_j^{n},
\end{equation}
and
\begin{equation}\label{eq:Gamma-def}
  \Gamma_{j+\frac12}^{n}
  :=
  \frac12\Big(
     \nabla\eta(\bar{\mathbf U}_{j}^{n})\cdot A\,\Delta\mathbf U_{j+1}^{n}
     +
     \nabla\eta(\bar{\mathbf U}_{j+1}^{n})\cdot A\,\Delta\mathbf U_{j}^{n}
  \Big).
\end{equation}
\end{Lem}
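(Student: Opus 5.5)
The plan is to subtract the equilibrium entropy identity from the full discrete entropy identity, and then remove the linear part of the relative entropy, which produces $R_j^{4,n}$. Since $\eta$ is quadratic, the definition \eqref{eq:eta-rel-quadratic-disc} gives, for every $n$,
\[
  \eta_{j,\mathrm{rel}}^{n}
  =
  \eta_j^n-\bar\eta_j^n-\nabla\eta(\bar{\mathbf U}_j^n)\cdot\Delta\mathbf U_j^n .
\]
Taking the difference between levels $n+1$ and $n$ yields
\[
  \eta_{j,\mathrm{rel}}^{n+1}-\eta_{j,\mathrm{rel}}^{n}
  =
  \bigl(\eta_j^{n+1}-\eta_j^n\bigr)
  -
  \bigl(\bar\eta_j^{n+1}-\bar\eta_j^n\bigr)
  -
  R_j^{4,n},
\]
with $R_j^{4,n}$ exactly as in \eqref{eq:R4-def-local-v2}.

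Next, I insert Theorem~\ref{thm:full-disc-entropy} for the first bracket and Proposition~\ref{prop:disc-entropy-equilibrium}, i.e.\ \eqref{eq:disc-eta-eq-identity}, for the second. The source-free terms pair off directly into $-\sigma(R^{1,n}_j-\bar R^{1,n}_j)-\sigma(R^{2,n}_j-\bar R^{2,n}_j)+\frac{\sigma^2}{2}(R^{3,n}_j-\bar R^{3,n}_j)$. The limiting scheme has no relaxation terms, so the two dissipation terms from \eqref{eq:full-disc-eta-identity} survive unchanged. Because $\bar v_j^{n+1}=a\bar u_j^{n+1}$, I rewrite $v_j^{n+1}-au_j^{n+1}=\Delta v_j^{n+1}-a\Delta u_j^{n+1}$, which gives the last two terms of \eqref{eq:disc-rel-entropy-local-identity-exact}.

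The remaining step, and the only genuinely nontrivial one, is to split the difference of numerical entropy fluxes $\Psi_{j+\frac12}^n-\bar\Psi_{j+\frac12}^n$. By \eqref{eq:qhat-matrix}, and since \eqref{eq:barPsi-explicit} is the same bilinear form evaluated at equilibrium states, this difference equals
\[
  \tfrac12(\mathbf U_j^n)^\top HA\,\mathbf U_{j+1}^n
  -
  \tfrac12(\bar{\mathbf U}_j^n)^\top HA\,\bar{\mathbf U}_{j+1}^n .
\]
Writing $\mathbf U=\bar{\mathbf U}+\Delta\mathbf U$ and expanding the bilinear form gives
\[
  \tfrac12(\Delta\mathbf U_j^n)^\top HA\,\Delta\mathbf U_{j+1}^n
  +
  \tfrac12(\bar{\mathbf U}_j^n)^\top HA\,\Delta\mathbf U_{j+1}^n
  +
  \tfrac12(\Delta\mathbf U_j^n)^\top HA\,\bar{\mathbf U}_{j+1}^n .
\]
The first term is $\Psi_{j+\frac12,\mathrm{rel}}^n$ by \eqref{eq:Psi-rel-tzavaras}. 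For the two mixed terms, I use that $HA$ is symmetric and $\nabla\eta(\bar{\mathbf U})=H\bar{\mathbf U}$. This turns them into
\[
  \tfrac12\bigl(\nabla\eta(\bar{\mathbf U}_j^n)\cdot A\Delta\mathbf U_{j+1}^n+\nabla\eta(\bar{\mathbf U}_{j+1}^n)\cdot A\Delta\mathbf U_j^n\bigr)=\Gamma_{j+\frac12}^n,
\]
where the second mixed term is handled via $(\Delta\mathbf U_j)^\top HA\bar{\mathbf U}_{j+1}=(\bar{\mathbf U}_{j+1})^\top HA\Delta\mathbf U_j$.

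Collecting the flux differences on the left-hand side then gives \eqref{eq:disc-rel-entropy-local-identity-exact}. I expect the only delicate point to be this symmetric bilinear bookkeeping: I need to keep track of which index carries $\bar{\mathbf U}$ in each mixed term so that it matches \eqref{eq:Gamma-def}. Everything else is a direct substitution.
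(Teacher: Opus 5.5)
Your proposal is correct and follows the same route as the paper's proof. You subtract \eqref{eq:disc-eta-eq-identity} from \eqref{eq:full-disc-eta-identity}, extract $R_j^{4,n}$ from the linear parts $\nabla\eta(\bar{\mathbf U}_j^m)\cdot\Delta\mathbf U_j^m$, rewrite $v_j^{n+1}-au_j^{n+1}$ using $\bar v=a\bar u$, and split $\Psi_{j+\frac12}^n-\bar\Psi_{j+\frac12}^n=\Psi_{j+\frac12,\mathrm{rel}}^n+\Gamma_{j+\frac12}^n$ by expanding the bilinear form and using the symmetry of $HA$ (you spell out the mixed terms more explicitly than the paper, and your index bookkeeping matches \eqref{eq:Gamma-def}).
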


\begin{proof}
Subtracting \eqref{eq:disc-eta-eq-identity} from
\eqref{eq:full-disc-eta-identity}, and using that the reference state lies on
the equilibrium manifold, we have
\begin{equation}
\label{eq:local-relative-identities}
\begin{aligned}
  v_j^{n+1}-a u_j^{n+1}
  &=
  \Delta v_j^{n+1}-a\Delta u_j^{n+1},\\
  \eta(\mathbf U_j^m)-\eta(\bar{\mathbf U}_j^m)
  &=
  \eta_{j,\mathrm{rel}}^m
  +
  \nabla\eta(\bar{\mathbf U}_j^m)
  \cdot\Delta\mathbf U_j^m,
  \qquad m=n,n+1,\\
  \Psi_{j+\frac12}^n-\bar\Psi_{j+\frac12}^n
  &=
  \Psi_{j+\frac12,\mathrm{rel}}^n
  +
  \Gamma_{j+\frac12}^n.
\end{aligned}
\end{equation}
The second identity follows from
\eqref{eq:eta-rel-quadratic-disc}, while the last one follows from
\(\mathbf U_j^n=\bar{\mathbf U}_j^n+\Delta\mathbf U_j^n\) and the symmetry of
\(HA\). The second relation produces the increment \(R_j^{4,n}\).
Substituting \eqref{eq:local-relative-identities} into the difference of the
two entropy identities yields
\eqref{eq:disc-rel-entropy-local-identity-exact}.
\end{proof}
%--------------------------------------
%==========================================================
\begin{Lem}
\label{lem:R1minusbarR1-exact}
Assume the subcharacteristic condition \eqref{eq:subchar-final}, and let
\(\mathbf U_j^n\), \(\bar{\mathbf U}_j^n\), and
\(\Delta\mathbf U_j^n\) be as defined above. We define the convective
relative dissipation by
\begin{equation}
\label{eq:R1_rel}
  R_{j,\mathrm{rel}}^{1,n}
  :=
  \frac{\lambda}{4}
  \left(
    \|\Delta\mathbf U_{j+1}^n-\Delta\mathbf U_j^n\|_H^2
    +
    \|\Delta\mathbf U_j^n-\Delta\mathbf U_{j-1}^n\|_H^2
  \right).
\end{equation}
Since \(H\) is positive definite, one has
\(R_{j,\mathrm{rel}}^{1,n}\geq0\).

Then, for every \(n\in\mathbb N\),
\begin{equation}
\label{eq:R1minusbarR1-sum-exact}
\begin{aligned}
\Delta x\sum_{j\in\mathbb Z}
\left(
  R_j^{1,n}
  -
  \bar R_j^{1,n}
\right)
&=
\Delta x\sum_{j\in\mathbb Z}
R_{j,\mathrm{rel}}^{1,n}\\
&\quad
+
\lambda(\lambda^2-a^2)\,
\Delta x\sum_{j\in\mathbb Z}
(\bar u_{j+1}^n-\bar u_j^n)
(\Delta u_{j+1}^n-\Delta u_j^n),
\end{aligned}
\end{equation}
where \(R_j^{1,n}\) and \(\bar R_j^{1,n}\) are defined in
\eqref{eq:Di-def} and \eqref{eq:barDi-def}, respectively.
\end{Lem}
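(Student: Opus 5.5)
The plan is to prove the identity first pointwise in \(j\), using only the fact that \(\bar{\mathbf U}_j^n\) lies on the equilibrium manifold, and then to sum over \(j\) with an index shift. For each interface \(j+\frac12\), set
\[
  \mathbf D_{j+\frac12}:=\bar{\mathbf U}_{j+1}^n-\bar{\mathbf U}_j^n
  =\begin{pmatrix} d_{j+\frac12}\\ a\,d_{j+\frac12}\end{pmatrix},
  \qquad d_{j+\frac12}:=\bar u_{j+1}^n-\bar u_j^n,
\]
and \(\mathbf E_{j+\frac12}:=\Delta\mathbf U_{j+1}^n-\Delta\mathbf U_j^n\). Since \(\mathbf U=\bar{\mathbf U}+\Delta\mathbf U\), the jump appearing in \(R_j^{1,n}\) is \(\mathbf D_{j+\frac12}+\mathbf E_{j+\frac12}\). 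Because \(H\) is symmetric, the expansion is
\[
  \|\mathbf D+\mathbf E\|_H^2=\|\mathbf D\|_H^2+2\,\mathbf D^\top H\mathbf E+\|\mathbf E\|_H^2 .
\]

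The key computation, and essentially the only nontrivial step, uses the structure of equilibrium jumps. A direct multiplication with \eqref{eq:H-def} gives
\[
  H\mathbf D_{j+\frac12}=\begin{pmatrix}(\lambda^2-a^2)\,d_{j+\frac12}\\ 0\end{pmatrix}.
\]
This has two consequences. First,
\[
  \|\mathbf D_{j+\frac12}\|_H^2=(\lambda^2-a^2)\,d_{j+\frac12}^2 ,
\]
so the \(\|\mathbf D\|_H^2\) contributions, multiplied by \(\lambda/4\) and summed over the two interfaces of cell \(j\), reproduce exactly \(\bar R_j^{1,n}\) from \eqref{eq:barDi-def}. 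Second, the cross term only sees the first component of \(\mathbf E\):
\[
  \mathbf D^\top H\mathbf E=(\lambda^2-a^2)\,d_{j+\frac12}\,(\Delta u_{j+1}^n-\Delta u_j^n).
\]
The \(\|\mathbf E\|_H^2\) terms give \(R_{j,\mathrm{rel}}^{1,n}\) from \eqref{eq:R1_rel}. Together these yield the local identity
\[
  R_j^{1,n}-\bar R_j^{1,n}=R_{j,\mathrm{rel}}^{1,n}
  +\frac{\lambda(\lambda^2-a^2)}{2}\Bigl[d_{j+\frac12}(\Delta u_{j+1}^n-\Delta u_j^n)+d_{j-\frac12}(\Delta u_j^n-\Delta u_{j-1}^n)\Bigr].
\]

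Finally, I multiply the local identity by \(\Delta x\) and sum over \(j\in\mathbb Z\). Shifting \(j\mapsto j+1\) in the second bracketed term makes it identical to the first, which doubles the factor \(\lambda/2\) to \(\lambda\) and gives \eqref{eq:R1minusbarR1-sum-exact}.

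The main point of care is justifying the splitting and reindexing of the sums, since the algebra itself is routine. I will argue that every series involved converges absolutely. Under the subcharacteristic condition, \(\|\cdot\|_H\) is equivalent to the Euclidean norm. Finiteness of \(\Phi_{\mathrm{rel}}^n\) puts \(\Delta\mathbf U^n\) in \(\ell^2\), so the jumps \(\mathbf E_{j+\frac12}\) are in \(\ell^2\) as well. The assumption that \((\tilde\delta_x\bar u)^n\) is square summable puts \(d_{j+\frac12}\) in \(\ell^2\). By Cauchy--Schwarz, the cross series is then absolutely convergent, so both the splitting of the sum and the shift of index are legitimate.
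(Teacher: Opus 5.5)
Your proposal is correct and follows essentially the same route as the paper. You split each interface jump into its equilibrium and relative parts, expand the $H$-norm, and use $H(1,a)^\top=(\lambda^2-a^2,0)^\top$, so that the equilibrium squares reproduce $\bar R_j^{1,n}$ and the cross terms involve only $\Delta u$; the two cross sums then coincide after an index shift. Your absolute-convergence argument for splitting and reindexing the series is extra rigor that the paper leaves implicit.
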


\begin{proof}
We use the polarization identity
\[
  \|\mathbf X+\mathbf Y\|_H^2
  =
  \|\mathbf X\|_H^2
  +
  \|\mathbf Y\|_H^2
  +
  2\mathbf Y^\top H\mathbf X,
  \qquad
  \mathbf X,\mathbf Y\in\mathbb R^2.
\]
Applying this identity to the two interface contributions in
\(R_j^{1,n}\), with
\[
  \mathbf U_{j+1}^n-\mathbf U_j^n
  =
  (\bar{\mathbf U}_{j+1}^n-\bar{\mathbf U}_j^n)
  +
  (\Delta\mathbf U_{j+1}^n-\Delta\mathbf U_j^n),
\]
and similarly at the interface \(j-\frac12\), gives
\[
\begin{aligned}
  R_j^{1,n}-\bar R_j^{1,n}
  &=
  R_{j,\mathrm{rel}}^{1,n}
  +
  \frac{\lambda}{2}
  (\Delta\mathbf U_{j+1}^n-\Delta\mathbf U_j^n)^\top
  H
  (\bar{\mathbf U}_{j+1}^n-\bar{\mathbf U}_j^n)\\
  &\quad
  +
  \frac{\lambda}{2}
  (\Delta\mathbf U_j^n-\Delta\mathbf U_{j-1}^n)^\top
  H
  (\bar{\mathbf U}_j^n-\bar{\mathbf U}_{j-1}^n).
\end{aligned}
\]
The equilibrium terms cancel with \(\bar R_j^{1,n}\), while the quadratic
terms involving \(\Delta\mathbf U^n\) give
\(R_{j,\mathrm{rel}}^{1,n}\).

Since
\[
  \bar{\mathbf U}_{j+1}^n-\bar{\mathbf U}_j^n
  =
  (\bar u_{j+1}^n-\bar u_j^n)
  \begin{pmatrix}
    1\\
    a
  \end{pmatrix},
  \qquad
  H
  \begin{pmatrix}
    1\\
    a
  \end{pmatrix}
  =
  \begin{pmatrix}
    \lambda^2-a^2\\
    0
  \end{pmatrix},
\]
the first cross term reduces to
\[
  \frac{\lambda(\lambda^2-a^2)}{2}
  (\bar u_{j+1}^n-\bar u_j^n)
  (\Delta u_{j+1}^n-\Delta u_j^n),
\]
and the second one has the analogous form at the interface
\(j-\frac12\).

After multiplying by \(\Delta x\) and summing over \(j\in\mathbb Z\), a change of summation index shows that the two cross-term sums are equal. Hence,
\eqref{eq:R1minusbarR1-sum-exact} follows.
\end{proof}
%==========================================================
%==========================================================
\begin{Lem}
\label{lem:convective-combination-clean}
Assume the subcharacteristic condition \eqref{eq:subchar-final} and
assumptions~\ref{ass:split-H1} and \ref{ass:split-H3} of
Theorem~\ref{thm:global-disc-rel-entropy}. If the CFL parameter also satisfies
\eqref{eq:theta0-assumption}, then there exists \(c_0\in(0,1)\), depending only
on \((a,\lambda)\) and \(\theta_0\), such that, for all \(n\in\mathbb N\),
\begin{equation}\label{eq:convective-combination-est}
\begin{aligned}
&-\sigma\,\dx\sum_{j\in\mathbb Z}\big(R_j^{1,n}-\bar R_j^{1,n}\big)
+\frac{\sigma^2}{2}\,\dx\sum_{j\in\mathbb Z}\big(R_j^{3,n}-\bar R_j^{3,n}\big)
-\dx\sum_{j\in\mathbb Z}R_j^{4,n}\\
&\le
-c_0\,\sigma\,\dx\sum_{j\in\mathbb Z}R_{j,\mathrm{rel}}^{1,n}
+\frac{\dt}{2\varepsilon}\,\dx\sum_{j\in\mathbb Z}(w_j^{n})^2
+\frac{(\lambda^2-a^2)^2\,\varepsilon}{2}\,
\dt\,\dx\sum_{j\in\mathbb Z}
\Big((\tilde\delta_x\bar u)_{j+\frac12}^n\Big)^2,
\end{aligned}
\end{equation}
where $R_j^{1,n}$ and $\bar R_j^{1,n}$ are defined in \eqref{eq:Di-def} and
\eqref{eq:barDi-def}, $R_j^{3,n}$ and $\bar R_j^{3,n}$ in \eqref{eq:Xi-def} and
\eqref{eq:barXi-def}, $R_j^{4,n}$ in \eqref{eq:R4-def-local-v2}, and
$R_{j,\mathrm{rel}}^{1,n}$ in \eqref{eq:R1_rel}.
\end{Lem}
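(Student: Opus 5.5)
Three groups of terms have to be handled: the relative convective dissipation, the second-order CFL term and the increment term $R^{4,n}$.

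\textbf{Step 1: the convective dissipation.} By Lemma~\ref{lem:R1minusbarR1-exact}, $-\sigma\Delta x\sum_j(R_j^{1,n}-\bar R_j^{1,n})$ equals
\[
-\sigma\Delta x\sum_j R_{j,\mathrm{rel}}^{1,n}
-\sigma\lambda(\lambda^2-a^2)\Delta x\sum_j(\bar u_{j+1}^n-\bar u_j^n)(\Delta u_{j+1}^n-\Delta u_j^n).
\]
The first term is the dissipation we keep; the second is a cross term.

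\textbf{Step 2: the second-order term.} Since the flux is linear, $\Delta\mathbf F_j^n$ is linear in $\mathbf U^n$. A direct check gives $\Delta\mathbf F(\bar{\mathbf U}^n)=\Delta\bar{\mathbf F}_j^n$ up to a term carrying $\bar v-a\bar u=0$, because $A(1,a)^\top=(a,\lambda^2)^\top$ and only $H$-products matter. Writing $\Delta\mathbf F_j^n=\Delta\bar{\mathbf F}_j^n+\mathcal D_j^n$, where $\mathcal D_j^n$ is the Rusanov flux difference applied to $\Delta\mathbf U^n$, we get
\[
R^{3}-\bar R^{3}=\|\mathcal D_j^n\|_H^2+2(\mathcal D_j^n)^\top H\Delta\bar{\mathbf F}_j^n .
\]
Here $\Delta\bar{\mathbf F}_j^n$ should be read as the full lifted Jin–Xin flux difference of $\bar{\mathbf U}$, whose $H$-product only sees the first component times $(\lambda^2-a^2)$. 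Each of $\mathcal D_j^n$ and $\Delta\bar{\mathbf F}_j^n$ is bounded by $C\lambda$ times the neighbouring jumps. Therefore
\[
\tfrac{\sigma^2}{2}\Delta x\sum_j\|\mathcal D_j^n\|_H^2\le C(a,\lambda)\,\lambda\sigma\cdot\sigma\Delta x\sum_j R_{j,\mathrm{rel}}^{1,n}.
\]
Under \ref{ass:split-H3}, with $\theta_0$ small, this absorbs part of the dissipation and leaves $c_0\in(0,1)$.

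\textbf{Step 3: the increment term, the main obstacle.} Summing over $j$,
\[
\Delta x\sum_j R^{4,n}_j=\Delta x\sum_j\big[(\lambda^2-a^2)\bar u_j^{n+1}\Delta u_j^{n+1}-(\lambda^2-a^2)\bar u_j^n\Delta u_j^n\big],
\]
using $H(1,a)^\top=(\lambda^2-a^2,0)^\top$. Both $\Delta u^{n+1}$ and $\bar u^{n+1}$ are obtained by explicit conservative updates: the source step leaves $u$ unchanged, so $u^{n+1}=u^{n+1/2}$. Hence $\Delta u_j^{n+1}=\Delta u_j^n-\sigma(\text{first component of }\mathcal D_j^n)$, and that component involves $\Delta v-a\Delta u=w$ plus numerical-viscosity jumps of $\Delta u$. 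Expanding the product and summing by parts (boundary terms vanish by \ref{ass:split-H1}) produces three kinds of contribution:
\begin{enumerate}[label=(\roman*)]
\item a term $\sigma(\lambda^2-a^2)\Delta x\sum_j w_j^n(\bar u_{j+1}^n-\bar u_j^n)$, i.e.\ $\dt(\lambda^2-a^2)\Delta x\sum_j w_j^n(\tilde\delta_x\bar u)_{j+1/2}^n$, exactly the discrete analogue of the continuous coupling term;
\item a viscosity cross term which cancels the cross term of Step 1; I expect this cancellation to be the delicate algebraic point;
\item products of $\sigma\,\Delta\widehat f$ with $\sigma\mathcal D$, and the cross term of Step 2, which are $O(\lambda\sigma)$ relative to the dissipation times gradients of $\bar u$.
\end{enumerate}
The terms in (iii) have to be regrouped so that they also cancel or sign-combine rather than leave an uncontrolled $\|\delta_x\bar u\|^2$ remainder. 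If exact cancellation fails, I would bound them by $\lambda\sigma$ times $R_{\mathrm{rel}}^1$ and absorb them via $\theta_0$.

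\textbf{Step 4: Young's inequality.} Finally, apply Young's inequality to the coupling term (i):
\[
\dt(\lambda^2-a^2)w(\tilde\delta_x\bar u)\le\frac{\dt}{2\varepsilon}w^2+\frac{\varepsilon(\lambda^2-a^2)^2}{2}\dt(\tilde\delta_x\bar u)^2 .
\]
This gives \eqref{eq:convective-combination-est}, with $c_0$ depending on $(a,\lambda,\theta_0)$.
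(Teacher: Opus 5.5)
\textbf{The gap is in your Step 2, and it cannot be closed, because the inequality itself fails for $\varepsilon<\dt/2$.} The Jin--Xin Rusanov difference of the lifted state does \emph{not} agree with $\Delta\bar{\mathbf F}_j^n$ up to a multiple of $\bar v-a\bar u$. Since $A(1,a)^\top=(a,\lambda^2)^\top\neq a(1,a)^\top$, one has exactly
\[
\Delta\mathbf F_j^n=\Delta\bar{\mathbf F}_j^n+\mathcal D_j^n+\mathbf E_j^n,
\qquad
\mathbf E_j^n:=\Bigl(0,\ \tfrac{\lambda^2-a^2}{2}\bigl(\bar u_{j+1}^n-\bar u_{j-1}^n\bigr)\Bigr)^\top .
\]
Moreover $H\mathbf E_j^n$ is proportional to $(-a,1)^\top$, so the $H$-products do not ``only see the first component''. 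Whichever reading of $\Delta\bar{\mathbf F}_j^n$ you take, $\mathbf E_j^n$ resurfaces. With the Jin--Xin difference of $\bar{\mathbf U}^n$, its $H$-norm squared is $\bar R_j^{3,n}+\|\mathbf E_j^n\|_H^2$. With the lifted flux, $R_j^{3,n}-\bar R_j^{3,n}=\|\mathcal D_j^n+\mathbf E_j^n\|_H^2+2(\mathcal D_j^n)^\top H\Delta\bar{\mathbf F}_j^n$.

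The piece $\frac{\sig^2}{2}\dx\sum_j\|\mathbf E_j^n\|_H^2=\frac{\sig^2(\lambda^2-a^2)^2}{8}\dx\sum_j(\bar u_{j+1}^n-\bar u_{j-1}^n)^2$ contains neither $\Delta\mathbf U^n$ nor $w^n$. It therefore cannot be absorbed into $R_{j,\mathrm{rel}}^{1,n}$ through $\theta_0$, and it is of order $\dt$ (not $\varepsilon$) times $\dt\,\dx\sum_j((\tilde\delta_x\bar u)_{j+\frac12}^n)^2$.

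This gives a counterexample. Take $\mathbf U^n=\bar{\mathbf U}^n$, e.g.\ $n=0$ for the well-prepared Riemann data of Section~\ref{sec:fixed-time-regimes}. Then $R_j^{1,n}=\bar R_j^{1,n}$, $R_{j,\mathrm{rel}}^{1,n}=0$ and $w^n=0$. Also $u^{n+1}=u^{n+\frac12}=\bar u^{n+1}$, so $\Delta\mathbf U_j^{n+1}=(0,w_j^{n+1})^\top$ and hence $R_j^{4,n}=0$. The left-hand side reduces to the $\mathbf E$-term, and the right-hand side to $\frac{(\lambda^2-a^2)^2\varepsilon}{2}\dt\,\dx\sum_j((\tilde\delta_x\bar u)_{j+\frac12}^n)^2$. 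For a single unit jump their ratio is $\dt/(2\varepsilon)$, so the inequality fails whenever $\varepsilon<\dt/2$.

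The paper's proof makes the same slip. \eqref{eq:DFrel-exact} is correct only in its first component, which is why \eqref{eq:Fhat1rel-explicit-final} and the cancellations hold, but \eqref{eq:DFrel-summed-bound} omits $\mathbf E_j^n$. Apart from this, your plan matches the paper's.
\begin{itemize}
\item The viscosity cross term does cancel the one from Lemma~\ref{lem:R1minusbarR1-exact}.
\item The terms in your (iii) need no absorption. With the lifted flux, $\sig^2(\mathcal D_j^n)^\top H\Delta\bar{\mathbf F}_j^n=\sig^2(\lambda^2-a^2)\Delta\widehat f_j^n$ times the first component of $\mathcal D_j^n$, and this cancels exactly against the $\sig^2$-part of $-R_j^{4,n}$, as in \eqref{eq:key_identity_pointwise_explicit}.
\item Your fallback of bounding those terms by $\lambda\sig R_{j,\mathrm{rel}}^{1,n}$ would not have worked, since they are bilinear in the jumps of $\bar u$ and in $\Delta\mathbf U^n$.
\end{itemize}

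A correct per-step estimate must use the relaxation at level $n+1$, which the lemma leaves out. Since $u^{n+1}=u^{n+\frac12}$ and $w_j^{n+1}=\rho\,w_j^{n+\frac12}$ with $\rho=\varepsilon/(\varepsilon+\dt)$, the term $\mathbf E_j^n$ enters $\Phi_{\mathrm{rel}}^{n+1}$ only through $\frac{\rho^2}{2}(w_j^{n+\frac12})^2$. The inequality $\rho^2(x-y)^2\le x^2+\frac{\rho^2}{1-\rho^2}y^2$ then bounds its cost by $\frac{\varepsilon(\lambda^2-a^2)^2}{4}\dt\,\dx\sum_j((\tilde\delta_x\bar u)_{j+\frac12}^n)^2$. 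The alternative is to enlarge the lemma's right-hand side by an $O(\dt)$ multiple of the gradient term.
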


\begin{proof}
We use the polarization identity
\begin{equation}\label{eq:polar-Q-again}
\|\mathbf X\|_H^2-\|\mathbf Y\|_H^2
=
\|\mathbf X-\mathbf Y\|_H^2
+
2(\mathbf X-\mathbf Y)^\top H\mathbf Y.
\end{equation}

We define the relative numerical flux by
\begin{equation}
\label{eq:Fhat-rel-def}
  \widehat{\mathbf F}_{j+\frac12,\mathrm{rel}}^n
  :=
  \widehat{\mathbf F}_{j+\frac12}^n
  -
  \widehat{\bar{\mathbf F}}_{j+\frac12}^n.
\end{equation}
Consequently,
\begin{equation}
\label{eq:DeltaF-rel-def}
\begin{aligned}
  \Delta\mathbf F_{j,\mathrm{rel}}^n
  &:=
  \Delta\mathbf F_j^n
  -
  \Delta\bar{\mathbf F}_j^n\\
  &=
  \widehat{\mathbf F}_{j+\frac12,\mathrm{rel}}^n
  -
  \widehat{\mathbf F}_{j-\frac12,\mathrm{rel}}^n.
\end{aligned}
\end{equation}
Since \(R_j^{3,n}=\|\Delta\mathbf F_j^n\|_H^2\) and
\(\bar R_j^{3,n}=\|\Delta\bar{\mathbf F}_j^n\|_H^2\), we obtain
\begin{equation}\label{eq:polar-R3-rel}
R_j^{3,n}-\bar R_j^{3,n}
=
\|\Delta\mathbf F_{j,\mathrm{rel}}^n\|_H^2
+
2(\Delta\mathbf F_{j,\mathrm{rel}}^n)^\top H\Delta\bar{\mathbf F}_j^n.
\end{equation}
Hence
\begin{equation}\label{eq:split-sigma2R3}
\begin{aligned}
\frac{\sigma^2}{2}\dx\sum_j(R_j^{3,n}-\bar R_j^{3,n})
=
\frac{\sigma^2}{2}
\dx\sum_j\|\Delta\mathbf F_{j,\mathrm{rel}}^n\|_H^2
+\sigma^2\dx\sum_j
(\Delta\mathbf F_{j,\mathrm{rel}}^n)^\top H\Delta\bar{\mathbf F}_j^n.
\end{aligned}
\end{equation}

For the Rusanov flux, one has the exact decomposition
\begin{equation}\label{eq:DFrel-exact}
\Delta\mathbf F_{j,\mathrm{rel}}^n
=
\frac12(A-\lambda I_2)(\Delta\mathbf U_{j+1}^n-\Delta\mathbf U_j^n)
+
\frac12(A+\lambda I_2)(\Delta\mathbf U_j^n-\Delta\mathbf U_{j-1}^n).
\end{equation}
Let
\[
C_0^{\pm}
:=\sup_{\mathbf Z\neq 0}\frac{\big\|\tfrac12(A\pm \lambda I_2)\mathbf Z\big\|_H^2}{\|\mathbf Z\|_H^2},
\qquad
C_0:=\max\{C_0^+,\,C_0^-\}.
\]
Then
\begin{equation}\label{eq:C0-bound}
\big\|\tfrac12(A\pm\lambda I_2)\mathbf Z\big\|_H^2
\le C_0\|\mathbf Z\|_H^2.
\end{equation}
Using \eqref{eq:DFrel-exact}, the inequality
\(\|X+Y\|_H^2\le2\|X\|_H^2+2\|Y\|_H^2\), and a shift of indices, we get
\begin{equation}\label{eq:DFrel-pointwise-bound}
\|\Delta\mathbf F_{j,\mathrm{rel}}^n\|_H^2
\le
2C_0
\Big(
\|\Delta\mathbf U_{j+1}^n-\Delta\mathbf U_j^n\|_H^2
+
\|\Delta\mathbf U_j^n-\Delta\mathbf U_{j-1}^n\|_H^2
\Big),
\end{equation}
and therefore
\begin{equation}\label{eq:DFrel-summed-bound}
\dx\sum_j\|\Delta\mathbf F_{j,\mathrm{rel}}^n\|_H^2
\le
4C_0\dx\sum_j
\|\Delta\mathbf U_{j+1}^n-\Delta\mathbf U_j^n\|_H^2.
\end{equation}
By the definition of \(R_{j,\mathrm{rel}}^{1,n}\),
\begin{equation}\label{eq:sumDFrel-to-R1rel}
\dx\sum_j\|\Delta\mathbf F_{j,\mathrm{rel}}^n\|_H^2
\le
\frac{8C_0}{\lambda}
\dx\sum_j R_{j,\mathrm{rel}}^{1,n}.
\end{equation}
Thus
\begin{equation}\label{eq:split-sigma2R3final}
\begin{aligned}
\frac{\sigma^2}{2}\dx\sum_j(R_j^{3,n}-\bar R_j^{3,n})
\le
\frac{\sigma^2}{2}\frac{8C_0}{\lambda}
\dx\sum_j R_{j,\mathrm{rel}}^{1,n}
+\sigma^2\dx\sum_j
(\Delta\mathbf F_{j,\mathrm{rel}}^n)^\top H\Delta\bar{\mathbf F}_j^n.
\end{aligned}
\end{equation}

We now treat the cross term together with \(R_j^{4,n}\). From the convective
updates,
\begin{equation}\label{eq:Uhalf_minus_Ubar_explicit}
\mathbf U_j^{n+\frac12}-\bar{\mathbf U}_j^{n+1}
=
\Delta\mathbf U_j^n
-
\sigma\Delta\mathbf F_{j,\mathrm{rel}}^n.
\end{equation}
Equivalently,
\begin{equation}\label{eq:cross_term_identity_explicit}
\sigma^2
(\Delta\mathbf F_{j,\mathrm{rel}}^n)^\top H\Delta\bar{\mathbf F}_j^n
=
\sigma
\Big(
\Delta\mathbf U_j^n
-
(\mathbf U_j^{n+\frac12}-\bar{\mathbf U}_j^{n+1})
\Big)^\top H\Delta\bar{\mathbf F}_j^n.
\end{equation}
Moreover, the source step does not change the first component. Since
\(\nabla\eta(\bar{\mathbf U}_j^m)=((\lambda^2-a^2)\bar u_j^m,0)^\top\), we can
write
\begin{equation}\label{eq:R4_convective_form_explicit}
R_j^{4,n}
=
\nabla\eta(\bar{\mathbf U}_j^{n+1})
\cdot(\mathbf U_j^{n+\frac12}-\bar{\mathbf U}_j^{n+1})
-
\nabla\eta(\bar{\mathbf U}_j^{n})
\cdot\Delta\mathbf U_j^n.
\end{equation}
Using
\(\bar{\mathbf U}_j^{n+1}
=\bar{\mathbf U}_j^n-\sigma\Delta\bar{\mathbf F}_j^n\), we have
\[
\nabla\eta(\bar{\mathbf U}_j^{n+1})
=
\nabla\eta(\bar{\mathbf U}_j^n)
-
\sigma H\Delta\bar{\mathbf F}_j^n.
\]
Substitution in \eqref{eq:R4_convective_form_explicit} gives
\begin{equation}\label{eq:R4_expand_explicit}
\begin{aligned}
R_j^{4,n}
&=
\nabla\eta(\bar{\mathbf U}_j^n)
\cdot
\Big(
\mathbf U_j^{n+\frac12}-\bar{\mathbf U}_j^{n+1}
-
\Delta\mathbf U_j^n
\Big)\\
&\quad
-\sigma
(\mathbf U_j^{n+\frac12}-\bar{\mathbf U}_j^{n+1})^\top
H\Delta\bar{\mathbf F}_j^n.
\end{aligned}
\end{equation}
Combining \eqref{eq:cross_term_identity_explicit} and
\eqref{eq:R4_expand_explicit}, and using
\eqref{eq:Uhalf_minus_Ubar_explicit}, yields
\begin{equation}\label{eq:key_identity_pointwise_explicit}
\begin{aligned}
\sigma^2
(\Delta\mathbf F_{j,\mathrm{rel}}^n)^\top H\Delta\bar{\mathbf F}_j^n
-
R_j^{4,n}
=
\sigma(\Delta\mathbf U_j^n)^\top H\Delta\bar{\mathbf F}_j^n
+
\sigma\nabla\eta(\bar{\mathbf U}_j^n)
\cdot\Delta\mathbf F_{j,\mathrm{rel}}^n.
\end{aligned}
\end{equation}

Summing \eqref{eq:key_identity_pointwise_explicit} over \(j\) and applying
discrete summation by parts, we obtain
\begin{equation}
\label{eq:discret-sum}
\begin{aligned}
  \sum_{j\in\mathbb Z}
  \nabla\eta(\bar{\mathbf U}_j^n)
  \cdot
  \left(
    \widehat{\mathbf F}_{j+\frac12,\mathrm{rel}}^n
    -
    \widehat{\mathbf F}_{j-\frac12,\mathrm{rel}}^n
  \right)=
  -
  \sum_{j\in\mathbb Z}
  \left(
    \nabla\eta(\bar{\mathbf U}_{j+1}^n)
    -
    \nabla\eta(\bar{\mathbf U}_j^n)
  \right)
  \cdot
  \widehat{\mathbf F}_{j+\frac12,\mathrm{rel}}^n,
\end{aligned}
\end{equation}
where the boundary terms vanish by \ref{ass:split-H1}. Since
\[
\nabla\eta(\bar{\mathbf U}_{j+1}^n)
-
\nabla\eta(\bar{\mathbf U}_{j}^n)
=
\big((\lambda^2-a^2)(\bar u_{j+1}^n-\bar u_j^n),0\big)^\top,
\]
we obtain
\begin{equation}\label{eq:sbp-reduced-to-first-comp}
\begin{aligned}
&\big(
\nabla\eta(\bar{\mathbf U}_{j+1}^n)
-
\nabla\eta(\bar{\mathbf U}_{j}^n)
\big)
\cdot
\widehat{\mathbf F}_{j+\frac12,\mathrm{rel}}^n
=
(\lambda^2-a^2)(\bar u_{j+1}^n-\bar u_j^n)
\widehat{\mathbf F}_{j+\frac12,\mathrm{rel}}^{(1),n}.
\end{aligned}
\end{equation}
By linearity of the Rusanov flux and using
\(\Delta v_j^n=w_j^n+a\Delta u_j^n\),
\begin{equation}\label{eq:Fhat1rel-explicit-final}
\widehat{\mathbf F}_{j+\frac12,\mathrm{rel}}^{(1),n}
=
\frac12(w_j^n+w_{j+1}^n)
+
\frac{a+\lambda}{2}\Delta u_j^n
+
\frac{a-\lambda}{2}\Delta u_{j+1}^n.
\end{equation}

It remains to compute the first term in
\eqref{eq:key_identity_pointwise_explicit}. Since
\[
\Delta\bar{\mathbf F}_j^n=\Delta\widehat f_j^n(1,a)^\top,
\qquad
H(1,a)^\top=(\lambda^2-a^2,0)^\top,
\]
we have
\begin{equation}\label{eq:second-term}
(\Delta\mathbf U_j^n)^\top H\Delta\bar{\mathbf F}_j^n
=
(\lambda^2-a^2)\Delta u_j^n\Delta\widehat f_j^n.
\end{equation}
Moreover,
\begin{equation}\label{eq:flux-wide}
\Delta\widehat f_j^n
=
\frac{a-\lambda}{2}(\bar u_{j+1}^n-\bar u_j^n)
+
\frac{a+\lambda}{2}(\bar u_j^n-\bar u_{j-1}^n).
\end{equation}
Using a shift of indices, this gives
\begin{equation}\label{eq:DU-DbF-final}
\begin{aligned}
\dx\sum_j
(\Delta\mathbf U_j^n)^\top H\Delta\bar{\mathbf F}_j^n
&=
(\lambda^2-a^2)\dx\sum_j
(\bar u_{j+1}^n-\bar u_j^n)\\
&\quad\times
\left(
\frac{a-\lambda}{2}\Delta u_j^n
+
\frac{a+\lambda}{2}\Delta u_{j+1}^n
\right).
\end{aligned}
\end{equation}
Combining \eqref{eq:discret-sum}-\eqref{eq:DU-DbF-final}, we obtain
\begin{equation}\label{eq:sum-key-identity1}
\begin{aligned}
&\sigma^2\dx\sum_j
(\Delta\mathbf F_{j,\mathrm{rel}}^n)^\top H\Delta\bar{\mathbf F}_j^n
-
\dx\sum_j R_j^{4,n}\\
&=
-\sigma(\lambda^2-a^2)\dx\sum_j
(\bar u_{j+1}^n-\bar u_j^n)
\frac{w_j^n+w_{j+1}^n}{2}\\
&\quad
+
\sigma\lambda(\lambda^2-a^2)\dx\sum_j
(\bar u_{j+1}^n-\bar u_j^n)
(\Delta u_{j+1}^n-\Delta u_j^n).
\end{aligned}
\end{equation}
Adding
\(-\sigma\dx\sum_j(R_j^{1,n}-\bar R_j^{1,n})\) and using
Lemma~\ref{lem:R1minusbarR1-exact}, the last term cancels exactly. Hence
\begin{equation}\label{eq:after-cancel}
\begin{aligned}
&-\sigma\dx\sum_j(R_j^{1,n}-\bar R_j^{1,n})
+
\sigma^2\dx\sum_j
(\Delta\mathbf F_{j,\mathrm{rel}}^n)^\top H\Delta\bar{\mathbf F}_j^n
-
\dx\sum_jR_j^{4,n}\\
&=
-\sigma\dx\sum_jR_{j,\mathrm{rel}}^{1,n}
-
\sigma(\lambda^2-a^2)\dx\sum_j
(\bar u_{j+1}^n-\bar u_j^n)
\frac{w_j^n+w_{j+1}^n}{2}.
\end{aligned}
\end{equation}

We estimate the remaining coupling term by Young's inequality. Since
\[
\bar u_{j+1}^n-\bar u_j^n
=
\dx(\tilde\delta_x\bar u)_{j+\frac12}^n,
\qquad
\sigma\dx=\dt,
\]
we get
\begin{equation}\label{eq:w-estimate-final}
\begin{aligned}
&\left|
\sigma(\lambda^2-a^2)\dx\sum_j
(\bar u_{j+1}^n-\bar u_j^n)
\frac{w_j^n+w_{j+1}^n}{2}
\right|\\
&\le
\frac{\dt}{2\varepsilon}\dx\sum_j(w_j^n)^2
+
\frac{(\lambda^2-a^2)^2\varepsilon}{2}
\dt\dx\sum_j
\Big((\tilde\delta_x\bar u)_{j+\frac12}^n\Big)^2.
\end{aligned}
\end{equation}

Finally, combining \eqref{eq:split-sigma2R3final} with
\eqref{eq:after-cancel} gives
\begin{equation}\label{eq:pre_absorb}
\begin{aligned}
&-\sigma\dx\sum_j(R_j^{1,n}-\bar R_j^{1,n})
+
\frac{\sigma^2}{2}\dx\sum_j(R_j^{3,n}-\bar R_j^{3,n})
-
\dx\sum_jR_j^{4,n}\\
&\le
-\sigma
\left(
1-\frac{4C_0}{\lambda}\sigma
\right)
\dx\sum_jR_{j,\mathrm{rel}}^{1,n}
-
\sigma(\lambda^2-a^2)\dx\sum_j
(\bar u_{j+1}^n-\bar u_j^n)
\frac{w_j^n+w_{j+1}^n}{2}.
\end{aligned}
\end{equation}
We choose \(\theta_0\) so that
\begin{equation}\label{eq:theta0-assumption}
  0<\theta_0\le\frac{\lambda^2}{8C_0}.
\end{equation}
Since \(\lambda\sigma\le\theta_0\), this gives
\[
  1-\frac{4C_0}{\lambda}\sigma\ge\frac12.
\]
Thus the estimate holds uniformly with the choice
\[
  c_0:=\frac12.
\]
Combining \eqref{eq:pre_absorb} with \eqref{eq:w-estimate-final} proves
\eqref{eq:convective-combination-est}.
\end{proof}
%==========================================================
% Proof of the global theorem
%==========================================================
\begin{proof}[Proof of Theorem~\ref{thm:global-disc-rel-entropy}]

Since \(\bar v_j^m=a\bar u_j^m\), relation
\eqref{eq:delta-v-w-du} gives
\[
  \Delta v_j^m-a\Delta u_j^m=w_j^m.
\]
Multiplying the local identity
\eqref{eq:disc-rel-entropy-local-identity-exact} by \(\dx\) and summing over
\(j\in\mathbb Z\), the discrete flux differences vanish under
assumption~\ref{ass:split-H1}. The same assumption also gives
\[
  \dx\sum_{j\in\mathbb Z}
  \left(
    R_j^{2,n}-\bar R_j^{2,n}
  \right)
  =0.
\]
Therefore,
\begin{equation}
\label{eq:global-step-n-exact-noR2}
\begin{aligned}
  \Phi_{\mathrm{rel}}^{n+1}
  &=
  \Phi_{\mathrm{rel}}^n
  -
  \sig\,\dx
  \sum_{j\in\mathbb Z}
  \left(
    R_j^{1,n}-\bar R_j^{1,n}
  \right)
  +
  \frac{\sig^2}{2}\,\dx
  \sum_{j\in\mathbb Z}
  \left(
    R_j^{3,n}-\bar R_j^{3,n}
  \right)
  -
  \dx\sum_{j\in\mathbb Z}R_j^{4,n}\\
  &\quad
  -
  \frac{\dt}{\varepsilon}\,
  \dx\sum_{j\in\mathbb Z}(w_j^{n+1})^2
  -
  \frac{\dt^2}{2\varepsilon^2}\,
  \dx\sum_{j\in\mathbb Z}(w_j^{n+1})^2.
\end{aligned}
\end{equation}
Applying Lemma~\ref{lem:convective-combination-clean} yields
\begin{equation}
\label{eq:global-step-n-ineq-detailed}
\begin{aligned}
  \Phi_{\mathrm{rel}}^{n+1}
  &+
  \frac{\dt}{\varepsilon}\,
  \dx\sum_{j\in\mathbb Z}(w_j^{n+1})^2
  +
  \frac{\dt^2}{2\varepsilon^2}\,
  \dx\sum_{j\in\mathbb Z}(w_j^{n+1})^2+
  c_0\,\sig\,\dx
  \sum_{j\in\mathbb Z}R_{j,\mathrm{rel}}^{1,n}\\
  &\leq
  \Phi_{\mathrm{rel}}^n
  +
  \frac{\dt}{2\varepsilon}\,
  \dx\sum_{j\in\mathbb Z}(w_j^n)^2
  +
  \frac{(\lambda^2-a^2)^2\varepsilon}{2}\,
  \dt\,\dx
  \sum_{j\in\mathbb Z}
  \left(
    (\tilde\delta_x\bar u)_{j+\frac12}^n
  \right)^2.
\end{aligned}
\end{equation}
We introduce
\begin{equation}
\label{eq:def-short-Sn-ourcase}
  \mathfrak S^n
  :=
  \Phi_{\mathrm{rel}}^n
  +
  \frac{\dt}{2\varepsilon}\,
  \dx\sum_{j\in\mathbb Z}(w_j^n)^2
  +
  \frac{\dt^2}{2\varepsilon^2}\,
  \dx\sum_{j\in\mathbb Z}(w_j^n)^2.
\end{equation}
Since the last term in \(\mathfrak S^n\) is nonnegative,
\eqref{eq:global-step-n-ineq-detailed} implies
\begin{equation}
\label{eq:S-iter-detailed}
\begin{aligned}
  \mathfrak S^{n+1}
  &+
  \frac{\dt}{2\varepsilon}\,
  \dx\sum_{j\in\mathbb Z}(w_j^{n+1})^2
  +
  c_0\,\sig\,\dx
  \sum_{j\in\mathbb Z}R_{j,\mathrm{rel}}^{1,n}\\
  &\leq
  \mathfrak S^n
  +
  \frac{(\lambda^2-a^2)^2\varepsilon}{2}\,
  \dt\,\dx
  \sum_{j\in\mathbb Z}
  \left(
    (\tilde\delta_x\bar u)_{j+\frac12}^n
  \right)^2.
\end{aligned}
\end{equation}
Summing from \(n=0\) to \(N-1\) and using
assumption~\ref{ass:split-H2}, we obtain
\[
\begin{aligned}
  \mathfrak S^N
  +
  \frac{\dt}{2\varepsilon}
  \sum_{n=0}^{N-1}
  \dx\sum_{j\in\mathbb Z}(w_j^{n+1})^2 +
  c_0\,\sig
  \sum_{n=0}^{N-1}
  \dx\sum_{j\in\mathbb Z}R_{j,\mathrm{rel}}^{1,n} \leq
  \mathfrak S^0
  +
  \frac{(\lambda^2-a^2)^2\varepsilon}{2}\,K^2.
\end{aligned}
\]
Discarding the nonnegative terms on the left and using the definition of
\(\mathfrak S^0\) gives
\[
  \Phi_{\mathrm{rel}}^N
  \leq
  \Phi_{\mathrm{rel}}^0
  +
  \frac{\dt}{2\varepsilon}\,
  \dx\sum_{j\in\mathbb Z}(w_j^0)^2
  +
  \frac{\dt^2}{2\varepsilon^2}\,
  \dx\sum_{j\in\mathbb Z}(w_j^0)^2
  +
  \frac{(\lambda^2-a^2)^2\varepsilon}{2}\,K^2.
\]
This proves \eqref{eq:global-disc-rel-entropy-at-N}.
\end{proof}
The preceding result is the fully discrete counterpart of the continuous
relative entropy estimate. In particular, if the initial data are
well prepared,
then \(
  \Phi_{\mathrm{rel}}^N=O(\varepsilon)
\)
uniformly for \(N\dt\leq T\).
%%%%%%%%%%%%%%%%%%%%%%%%%%%%%%%%%%%%%%%%%%%%%%%%%%%%%%%%%
%%%%%%%%%%%%%%%%%%%%%%%%%%%%%%%%%%%%%%%%%%%%%%%%%%%%%%%%%%%%%%%%%%%%%%%%%%%%%%
% SECTION : Discrete relative entropy for the staggered scheme
%%%%%%%%%%%%%%%%%%%%%%%%%%%%%%%%%%%%%%%%%%%%%%%%%%%%%%%%%%%%%%%%%%%%%%%%%%%%%%
\section{Discrete relative entropy for the staggered scheme}
\label{sec:disc-rel-entropy-stag}

\subsection{Definition of the staggered scheme}
\label{subsec:stag-scheme}

We now consider the staggered \\ asymptotic-preserving scheme for the linear
Jin-Xin relaxation system. The scheme is introduced in \cite{MahmoudMathis2025}. The purpose of this
section is to analyze its discrete relative entropy structure by comparing the
relaxation solution with the corresponding limiting discrete solution.

We use the spatial and temporal discretizations introduced in
Section~\ref{sec:numerical-schemes}, with
\(
  \sigma=\frac{\Delta t}{\Delta x}.
\)
The numerical solution is defined at the cell centers \(x_j\) at integer time
levels \(t^n\), whereas the intermediate states are defined at the interfaces
\(x_{j+\frac12}\) at time \(t^{n+\frac12}\). We write
\[
  \mathbf U_j^n
  =
  \begin{pmatrix}
    u_j^n\\
    v_j^n
  \end{pmatrix},
  \qquad
  \mathbf U_{j+\frac12}^{n+\frac12}
  =
  \begin{pmatrix}
    u_{j+\frac12}^{n+\frac12}\\
    v_{j+\frac12}^{n+\frac12}
  \end{pmatrix}.
\]

The scheme consists of two successive staggered updates. The first update
advances the solution from the cell centers at time \(t^n\) to the interfaces
at time \(t^{n+\frac12}\), while the second update advances the intermediate
states back to the cell centers at time \(t^{n+1}\). For the Jin-Xin system
\eqref{eq:jin-xin-system}, the scheme is given by
\begin{equation}
\label{eq:disc-JX-stag}
\begin{aligned}
  u_{j-\frac12}^{n+\frac12}
  &=
  \frac{u_{j-1}^n+u_j^n}{2}
  -
  \frac{\sigma}{2}
  \Big(
    a u_j^n
    +
    e^{-\frac{\Delta t}{2\varepsilon}}
    (v_j^n-a u_j^n)
    -
    a u_{j-1}^n
    -
    e^{-\frac{\Delta t}{2\varepsilon}}
    (v_{j-1}^n-a u_{j-1}^n)
  \Big),
  \\[0.4em]
  v_{j-\frac12}^{n+\frac12}
  &=
  \left(
    1+\frac{\Delta t}{2\varepsilon}
  \right)^{-1}
  \left[
    \frac{v_{j-1}^n+v_j^n}{2}
    -
    \frac{\lambda^2\sigma}{2}
    (u_j^n-u_{j-1}^n)
    +
    \frac{\Delta t}{2\varepsilon}
    a\,u_{j-\frac12}^{n+\frac12}
  \right],
  \\[0.4em]
  u_j^{n+1}
  &=
  \frac{
    u_{j-\frac12}^{n+\frac12}
    +
    u_{j+\frac12}^{n+\frac12}
  }{2}
  -
  \frac{\sigma}{2}
  \Big(
    a u_{j+\frac12}^{n+\frac12}
    +
    e^{-\frac{\Delta t}{2\varepsilon}}
    \big(
      v_{j+\frac12}^{n+\frac12}
      -
      a u_{j+\frac12}^{n+\frac12}
    \big)
  \\
  &\hspace{8em}
    -
    a u_{j-\frac12}^{n+\frac12}
    -
    e^{-\frac{\Delta t}{2\varepsilon}}
    \big(
      v_{j-\frac12}^{n+\frac12}
      -
      a u_{j-\frac12}^{n+\frac12}
    \big)
  \Big),
  \\[0.4em]
  v_j^{n+1}
  &=
  \left(
    1+\frac{\Delta t}{2\varepsilon}
  \right)^{-1}
  \left[
    \frac{
      v_{j-\frac12}^{n+\frac12}
      +
      v_{j+\frac12}^{n+\frac12}
    }{2}
    -
    \frac{\lambda^2\sigma}{2}
    \big(
      u_{j+\frac12}^{n+\frac12}
      -
      u_{j-\frac12}^{n+\frac12}
    \big)
    +
    \frac{\Delta t}{2\varepsilon}
    a\,u_j^{n+1}
  \right].
\end{aligned}
\end{equation}

We use the nonequilibrium variable \(w_j^n\) introduced in
\eqref{eq:w-def-disc} and extend its definition to the intermediate staggered
states by setting
\begin{equation}
\label{eq:w-def-stag}
  w_{j+\frac12}^{n+\frac12}
  :=
  v_{j+\frac12}^{n+\frac12}
  -
  a\,u_{j+\frac12}^{n+\frac12}.
\end{equation}

In terms of the variables \((u,w)\), the two staggered updates for \(u\) can be
written as
\begin{equation}
\label{eq:u-half-compact}
\begin{aligned}
  u_{j-\frac12}^{n+\frac12}
  &=
  \frac{u_{j-1}^n+u_j^n}{2}
  -
  \frac{\sigma}{2}
  \left[
    a(u_j^n-u_{j-1}^n)
    +
    e^{-\frac{\Delta t}{2\varepsilon}}
    (w_j^n-w_{j-1}^n)
  \right],
  \\
  u_j^{n+1}
  &=
  \frac{
    u_{j-\frac12}^{n+\frac12}
    +
    u_{j+\frac12}^{n+\frac12}
  }{2}
  -
  \frac{\sigma}{2}
  \left[
    a
    \big(
      u_{j+\frac12}^{n+\frac12}
      -
      u_{j-\frac12}^{n+\frac12}
    \big)
    +
    e^{-\frac{\Delta t}{2\varepsilon}}
    \big(
      w_{j+\frac12}^{n+\frac12}
      -
      w_{j-\frac12}^{n+\frac12}
    \big)
  \right].
\end{aligned}
\end{equation}

The corresponding updates for the nonequilibrium variable are
\begin{equation}
\label{eq:w-half-u-w-form}
\begin{aligned}
  \left(
    1+\frac{\Delta t}{2\varepsilon}
  \right)
  w_{j-\frac12}^{n+\frac12}
  &=
  \frac{w_{j-1}^n+w_j^n}{2}
  -
  \frac{(\lambda^2-a^2)\sigma}{2}
  (u_j^n-u_{j-1}^n)
  +
  \frac{a\sigma}{2}
  e^{-\frac{\Delta t}{2\varepsilon}}
  (w_j^n-w_{j-1}^n),
  \\
  \left(
    1+\frac{\Delta t}{2\varepsilon}
  \right)
  w_j^{n+1}
  &=
  \frac{
    w_{j-\frac12}^{n+\frac12}
    +
    w_{j+\frac12}^{n+\frac12}
  }{2}
  -
  \frac{(\lambda^2-a^2)\sigma}{2}
  \big(
    u_{j+\frac12}^{n+\frac12}
    -
    u_{j-\frac12}^{n+\frac12}
  \big)
  \\
  &\quad
  +
  \frac{a\sigma}{2}
  e^{-\frac{\Delta t}{2\varepsilon}}
  \big(
    w_{j+\frac12}^{n+\frac12}
    -
    w_{j-\frac12}^{n+\frac12}
  \big).
\end{aligned}
\end{equation}

\medskip
\noindent\textbf{Limiting scheme and lifting.}
We now identify the corresponding limiting discrete scheme. For fixed
\(\Delta t>0\), as \(\varepsilon\to0\),
\[
  e^{-\frac{\Delta t}{2\varepsilon}}
  \longrightarrow0,
  \qquad
  \left(
    1+\frac{\Delta t}{2\varepsilon}
  \right)^{-1}
  \longrightarrow0.
\]
Consequently, the nonequilibrium variable tends to zero, and the numerical
solution is driven towards the equilibrium manifold:
\[
  w=0,
  \qquad\text{or equivalently}\qquad
  v=au.
\]
Passing formally to the limit in the two updates for \(u\) yields
\begin{equation}
\label{eq:ubar-half-stag}
\begin{aligned}
  \bar u_{j-\frac12}^{n+\frac12}
  &=
  \frac{\bar u_{j-1}^n+\bar u_j^n}{2}
  -
  \frac{a\sigma}{2}
  (\bar u_j^n-\bar u_{j-1}^n),
  \\
  \bar u_j^{n+1}
  &=
  \frac{
    \bar u_{j-\frac12}^{n+\frac12}
    +
    \bar u_{j+\frac12}^{n+\frac12}
  }{2}
  -
  \frac{a\sigma}{2}
  \big(
    \bar u_{j+\frac12}^{n+\frac12}
    -
    \bar u_{j-\frac12}^{n+\frac12}
  \big).
\end{aligned}
\end{equation}
The scalar limiting solution is lifted onto the equilibrium manifold at both
the integer and intermediate time levels by setting
\begin{equation}
\label{eq:Ubar-def-stag}
  \bar{\mathbf U}_j^n
  :=
  \begin{pmatrix}
    \bar u_j^n\\
    a\bar u_j^n
  \end{pmatrix},
  \qquad
  \bar{\mathbf U}_{j+\frac12}^{n+\frac12}
  :=
  \begin{pmatrix}
    \bar u_{j+\frac12}^{n+\frac12}\\
    a\bar u_{j+\frac12}^{n+\frac12}
  \end{pmatrix}.
\end{equation}
These lifted states belong to \(\mathcal M_{\mathrm{eq}}\) and provide the
discrete reference solution for the staggered relative entropy analysis.
%%%%%%%%%%%%%%%%%%%%%%%%%%%%%%%%%%%%%%%%%%%%
\subsection{Discrete relative entropy}
\label{subsec:stag-rel-entropy}

We use the \(H\)-norm introduced in the previous section. Since the reference
state belongs to the equilibrium manifold, for
\(\bar{\mathbf U}=(\bar u,a\bar u)^\top\), one has
\begin{equation}
\label{eq:Q-split-w-stag}
\begin{aligned}
  \|\mathbf U-\bar{\mathbf U}\|_H^2 =
  (\lambda^2-a^2)(u-\bar u)^2+w^2.
\end{aligned}
\end{equation}
Thus, the relative entropy controls both the error in \(u\) and the
nonequilibrium variable \(w\).

We define the local relative entropies at the integer and intermediate time
levels by
\begin{equation}\label{eq:entropy-stagg}
  \eta_{j,\mathrm{rel}}^n
  :=
  \frac12
  \|\mathbf U_j^n-\bar{\mathbf U}_j^n\|_H^2,
  \qquad
  \eta_{j+\frac12,\mathrm{rel}}^{n+\frac12}
  :=
  \frac12
  \left\|
    \mathbf U_{j+\frac12}^{n+\frac12}
    -
    \bar{\mathbf U}_{j+\frac12}^{n+\frac12}
  \right\|_H^2.
\end{equation}
The corresponding global relative entropy functionals are
\begin{equation}
\label{eq:phi-stag}
  \Phi_{\mathrm{rel}}^n
  :=
  \Delta x
  \sum_{j\in\mathbb Z}
  \eta_{j,\mathrm{rel}}^n,
  \qquad
  \Phi_{\mathrm{rel}}^{n+\frac12}
  :=
  \Delta x
  \sum_{j\in\mathbb Z}
  \eta_{j+\frac12,\mathrm{rel}}^{n+\frac12}.
\end{equation}

We also introduce the discrete derivatives of the reference solution at the
two staggered levels:
\begin{equation}
\label{eq:tilde-dx-ubar-stag}
  (\tilde\delta_x\bar u)_{j+\frac12}^n
  :=
  \frac{\bar u_{j+1}^n-\bar u_j^n}{\Delta x},
  \qquad
  (\tilde\delta_x\bar u)_j^{n+\frac12}
  :=
  \frac{
    \bar u_{j+\frac12}^{n+\frac12}
    -
    \bar u_{j-\frac12}^{n+\frac12}
  }{\Delta x}.
\end{equation}

\begin{Thm}[Global discrete relative entropy estimate for the staggered scheme]
\label{thm:global-stag-final}
Let \((\mathbf U_j^n)_{j,n}\) be generated by the staggered scheme
\eqref{eq:disc-JX-stag}, and let \((\bar u_j^n)_{j,n}\) be the solution of the
limiting staggered scheme \eqref{eq:ubar-half-stag}. Let
\(\bar{\mathbf U}\) be defined by \eqref{eq:Ubar-def-stag}, and let \(w\)
denote the nonequilibrium variable introduced above.

Assume the subcharacteristic condition \eqref{eq:subchar-final} and the CFL
condition
\begin{equation}
\label{eq:cfl-global-final}
  \lambda\sigma\leq\frac{1}{\sqrt{3}}.
\end{equation}
Define
\begin{equation}
\label{eq:kappa-sigma-def-final}
  \kappa_\sigma
  :=
  \frac{
    (\lambda^2-a^2)\sigma^2
  }{
    2(1-\lambda^2\sigma^2)
  }.
\end{equation}
Assume moreover that:
\begin{enumerate}[
  label=\textnormal{(H\arabic*)},
  ref=\textnormal{(H\arabic*)},
  start=0,
  leftmargin=*
]
  \item
  \label{ass:stag-H0}
  \(
    \Phi_{\mathrm{rel}}^0<+\infty;
  \)

  \item
  \label{ass:stag-H1}
  there exist two constant equilibrium states
  \[
    \mathbf U_\pm
    =
    \begin{pmatrix}
      u_\pm\\
      a u_\pm
    \end{pmatrix}
  \]
  such that, for every \(n\in\mathbb N\),
  \begin{equation}
  \label{eq:states-at-infinity-stag}
    \lim_{j\to\pm\infty}\mathbf U_j^n
    =
    \mathbf U_\pm,
    \qquad
    \lim_{j\to\pm\infty}\bar{\mathbf U}_j^n
    =
    \mathbf U_\pm;
  \end{equation}

  \item
  \label{ass:stag-H2}
  there exists \(K>0\) such that, for every \(N\) satisfying
  \(N\Delta t\leq T\),
  \begin{equation}
  \label{eq:stag-H2}
    \sum_{n=0}^{N-1}
    \Delta t\,\Delta x
    \sum_{j\in\mathbb Z}
    \left(
      (\tilde\delta_x\bar u)_{j+\frac12}^n
    \right)^2
    \leq K^2.
  \end{equation}
\end{enumerate}
Then, for every \(N\) satisfying \(N\Delta t\leq T\),
\begin{equation}
\label{eq:global-bound-final}
\begin{aligned}
  \Phi_{\mathrm{rel}}^N
  +
  \kappa_\sigma
  \frac{\Delta t}{\varepsilon}
  \Delta x
  \sum_{j\in\mathbb Z}
  (w_j^N)^2
  \leq
  \Phi_{\mathrm{rel}}^0
  +
  \kappa_\sigma
  \frac{\Delta t}{\varepsilon}
  \Delta x
  \sum_{j\in\mathbb Z}
  (w_j^0)^2
  +
  \frac{(\lambda^2-a^2)^2}{2}
  \varepsilon K^2.
\end{aligned}
\end{equation}
\end{Thm}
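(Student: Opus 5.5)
I would work with the error variables \(e_j^n:=u_j^n-\bar u_j^n\) and \(w_j^n\). By \eqref{eq:Q-split-w-stag}, \(\eta_{j,\mathrm{rel}}^n=\frac12\big((\lambda^2-a^2)(e_j^n)^2+(w_j^n)^2\big)\), so everything reduces to estimating a scalar pair \((e,w)\).

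\textbf{Error equations.} Subtracting \eqref{eq:ubar-half-stag} from \eqref{eq:u-half-compact} gives, for each half step, a Lax--Friedrichs-type update for \(e\) forced by \(e^{-\dt/(2\varepsilon)}\) times a difference of \(w\). The \(w\)-updates \eqref{eq:w-half-u-w-form} are already error equations, since \(w\) vanishes at equilibrium. In them the term \(u_j^n-u_{j-1}^n\) splits as \((e_j^n-e_{j-1}^n)+(\bar u_j^n-\bar u_{j-1}^n)\). The second piece, \(\dx(\tilde\delta_x\bar u)\), is the only source term, exactly as in Proposition~\ref{prop:relative-entropy-JX}.

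\textbf{Half-step estimate.} I would prove a single estimate for one half step (centers \(\to\) interfaces) and apply it twice, since the second half step (interfaces \(\to\) centers) has the same structure. The estimate is
\[
  \Phi_{\mathrm{rel}}^{n+\frac12}+\kappa_\sigma\tfrac{\dt}{\varepsilon}\dx\sum(w^{n+\frac12})^2
  \le\Phi_{\mathrm{rel}}^{n}+\kappa_\sigma\tfrac{\dt}{\varepsilon}\dx\sum(w^{n})^2
  +\tfrac{(\lambda^2-a^2)^2}{4}\varepsilon\,\dt\,\dx\sum\big((\tilde\delta_x\bar u)^n\big)^2 .
\]
To obtain it:
\begin{itemize}
\item Multiply the \(w\)-update by \((1+\frac{\dt}{2\varepsilon})w_{j-\frac12}^{n+\frac12}\) and use \(2xy\le x^2+y^2\) on the right-hand side. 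This produces the dissipation \(\big((1+\frac{\dt}{2\varepsilon})^2-1\big)\dx\sum (w^{n+\frac12})^2\ge \frac{\dt}{\varepsilon}\dx\sum(w^{n+\frac12})^2\).
\item Write the homogeneous part of both updates (the averaging plus the \(\lambda^2\sigma\) difference, equivalently the \(a\sigma\) and \(e^{-\dt/2\varepsilon}\) terms) as a Lax--Friedrichs operator for the symmetric system in the \(H\)-norm.
\item Use the standard von Neumann/energy bound for the staggered Lax--Friedrichs scheme: it is \(H\)-contractive for \(\lambda\sigma\le1\), with a loss of order \(\lambda^2\sigma^2\) in the mixed terms. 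The factors \(e^{-\dt/(2\varepsilon)}\le1\) and \((1+\frac{\dt}{2\varepsilon})^{-1}\le1\) are perturbations of this bound. They create an unmatched cross term between the \(e\)-jump and the \(w\)-jump.
\item Control that cross term by Young's inequality with weight \(1-\lambda^2\sigma^2\). This is where \(\kappa_\sigma=\frac{(\lambda^2-a^2)\sigma^2}{2(1-\lambda^2\sigma^2)}\) arises: the cross-term defect is absorbed by borrowing \(\kappa_\sigma\frac{\dt}{\varepsilon}\) of the \(w\)-dissipation. The condition \(\lambda\sigma\le 1/\sqrt3\) should guarantee \(\kappa_\sigma\le\) the available fraction of the dissipation, leaving at least \(\frac{\dt}{2\varepsilon}\) for the forcing.
\item Handle the forcing \(-\frac{(\lambda^2-a^2)\sigma}{2}\dx\,(\tilde\delta_x\bar u)\,w^{n+\frac12}\) by Young's inequality against the remaining \(\frac{\dt}{2\varepsilon}\) dissipation.
\end{itemize}

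\textbf{Full step and summation.} Chaining the two half steps gives the one-step inequality. The second half step involves \((\tilde\delta_x\bar u)_j^{n+\frac12}\). Since \eqref{eq:ubar-half-stag} is a convex combination for \(|a|\sigma\le1\), this gradient satisfies \(\dx\sum((\tilde\delta_x\bar u)^{n+\frac12})^2\le\dx\sum((\tilde\delta_x\bar u)^{n})^2\). Hence the two half-step forcings add up to \(\frac{(\lambda^2-a^2)^2}{2}\varepsilon\dt\dx\sum((\tilde\delta_x\bar u)_{j+\frac12}^n)^2\). Summation by parts is legitimate because of \ref{ass:stag-H1} and \ref{ass:stag-H0}. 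Summing from \(n=0\) to \(N-1\), telescoping the quantity \(\Phi_{\mathrm{rel}}^n+\kappa_\sigma\frac{\dt}{\varepsilon}\dx\sum(w^n)^2\), and applying \ref{ass:stag-H2} yields \eqref{eq:global-bound-final}.

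\textbf{Main obstacle.} The hard step is the exact half-step energy identity: showing that the non-symmetric perturbation caused by the mismatched factors \(e^{-\dt/2\varepsilon}\) and \((1+\dt/2\varepsilon)^{-1}\) costs at most \(\kappa_\sigma\frac{\dt}{\varepsilon}\|w\|^2\). I would first expand everything in terms of jumps \(e_j-e_{j-1}\), \(w_j-w_{j-1}\) and averages, compute the resulting quadratic form exactly, and then check its negativity under \(\lambda\sigma\le1/\sqrt3\). For this check I would use \(1-e^{-x}\le x\) to compare \(1-e^{-\dt/2\varepsilon}\) with \(\dt/(2\varepsilon)\).
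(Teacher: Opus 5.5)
Your plan is essentially the paper's proof. Like the paper, you expand one half step in the averages and half-jumps of $(\Delta u,w)$; in the paper's notation $B$ and $C$ are the half-jumps of $\Delta u$ and $w$, and $D$ is the average of $w$. This yields the numerical diffusion $-\frac{\lambda^2-a^2}{2}(1-\lambda^2\sigma^2)B^2-\frac12\bigl(1-\lambda^2\sigma^2e^{-\frac{\dt}{\varepsilon}}\bigr)C^2$ together with the telescoping fluxes $\mathcal B_{k,\mathrm{rel}}^n$. The paper then uses the same tools you propose: Young's inequality with $\theta=\frac{1-\lambda^2\sigma^2}{4\sigma}$ and $1-e^{-x}\le x$ for the defect, part of the relaxation dissipation for the forcing (keeping $\frac{\dt}{4\varepsilon}(w^{n+\frac12})^2$), the convexity bound on $(\tilde\delta_x\bar u)^{n+\frac12}$, and $\kappa_\sigma\le\frac14$.

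Two details of your heuristic need correcting.

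First, the jump--jump ($B$--$C$) cross terms cancel exactly. The only unmatched term is $-(\lambda^2-a^2)\sigma\bigl(1-e^{-\frac{\dt}{2\varepsilon}}\bigr)DB$, which couples the jump of $\Delta u$ with the \emph{average} of $w$, not the jump. This is why the old-level term $\kappa_\sigma\frac{\dt}{\varepsilon}\dx\sum_j(w_j^n)^2$ appears and has to be paid by the previous step's dissipation.

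Second, test the $w$-update with $w^{n+\frac12}$ itself, not with $\bigl(1+\frac{\dt}{2\varepsilon}\bigr)w^{n+\frac12}$. Otherwise the forcing carries the factor $1+\frac{\dt}{2\varepsilon}$, and Young's inequality then gives only an $O(\dt)$ forcing constant instead of $O(\varepsilon)$ when $\varepsilon\ll\dt$.
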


The proof is based on two local relative entropy estimates corresponding to
the two staggered updates.

\begin{Lem}[Local half-step relative entropy inequality]
\label{lem:local-half-stag-corrected}
Assume the subcharacteristic condition \eqref{eq:subchar-final} and the CFL
condition \eqref{eq:cfl-global-final}. Then, for every \(j\in\mathbb Z\) and
\(n\in\mathbb N\),
\begin{align}
\label{eq:local-half-ineq-stag-corrected}
  \eta_{j-\frac12,\mathrm{rel}}^{n+\frac12}
  &\leq
  \frac12
  \left(
    \eta_{j-1,\mathrm{rel}}^n
    +
    \eta_{j,\mathrm{rel}}^n
  \right)
  -
  \frac{\sigma}{2}
  \left(
    \mathcal B_{j,\mathrm{rel}}^n
    -
    \mathcal B_{j-1,\mathrm{rel}}^n
  \right)
  -
  \frac{\Delta t}{4\varepsilon}
  \left(
    w_{j-\frac12}^{n+\frac12}
  \right)^2
  \notag\\
  &\quad
  +
  \kappa_\sigma
  \frac{\Delta t}{\varepsilon}
  \frac{
    (w_{j-1}^n)^2+(w_j^n)^2
  }{2}
  +
  \frac{(\lambda^2-a^2)^2\varepsilon}{4}
  \Delta t
  \left(
    (\tilde\delta_x\bar u)_{j-\frac12}^n
  \right)^2,
\end{align}
where
\[
\begin{aligned}
  \mathcal B_{k,\mathrm{rel}}^n
  :=
  \frac{a(\lambda^2-a^2)}{2}
  (\Delta u_k^n)^2
  +
  (\lambda^2-a^2)
  e^{-\frac{\Delta t}{2\varepsilon}}
  \Delta u_k^n w_k^n
  -
  \frac a2
  e^{-\frac{\Delta t}{2\varepsilon}}
  (w_k^n)^2.
\end{aligned}
\]
\end{Lem}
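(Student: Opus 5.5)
The plan is to work entirely in the error variables \((\Delta u,w)\). Since \(\bar w\equiv0\), \eqref{eq:Q-split-w-stag} gives
\[
\eta_{\mathrm{rel}}=\tfrac12\bigl[(\lambda^2-a^2)(\Delta u)^2+w^2\bigr].
\]
Subtracting the first line of \eqref{eq:ubar-half-stag} from the first line of \eqref{eq:u-half-compact} yields a Lax--Friedrichs-type staggered update for \(\Delta u_{j-\frac12}^{n+\frac12}\). Writing \(u_j^n-u_{j-1}^n=(\Delta u_j^n-\Delta u_{j-1}^n)+\dx(\tilde\delta_x\bar u)_{j-\frac12}^n\) in the first line of \eqref{eq:w-half-u-w-form} gives \((1+\tau)w_{j-\frac12}^{n+\frac12}=w^\ast\), where \(\tau:=\dt/(2\varepsilon)\). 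Here \(w^\ast\) is the explicit value
\[
w^\ast=\tfrac12(w_{j-1}^n+w_j^n)-\tfrac{\sigma}{2}\bigl[(\lambda^2-a^2)(\Delta u_j^n-\Delta u_{j-1}^n)-ae^{-\tau}(w_j^n-w_{j-1}^n)\bigr]-\tfrac{(\lambda^2-a^2)}{2}\dt\,(\tilde\delta_x\bar u)_{j-\frac12}^n .
\]
Set \(Y_k=(\Delta u_k^n,w_k^n)^\top\), \(\tilde H=\mathrm{diag}(\lambda^2-a^2,1)\) and \(e=e^{-\tau}\). Then the homogeneous part reads \(Y^\ast=\tfrac12(Y_{j-1}+Y_j)-\tfrac\sigma2 B(Y_j-Y_{j-1})\) with
\[
B=\begin{pmatrix}a&e\\ \lambda^2-a^2&-ae\end{pmatrix}.
\]

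The first step is the exact staggered Lax--Friedrichs entropy identity for a quadratic entropy. Expanding \(\tfrac12\|Y^\ast\|_{\tilde H}^2\) gives
\[
\tfrac12\|Y^\ast\|^2_{\tilde H}=\tfrac12(\eta_{j-1}+\eta_j)-\tfrac18\|Y_j-Y_{j-1}\|_{\tilde H}^2+\tfrac{\sigma^2}{8}\|B(Y_j-Y_{j-1})\|_{\tilde H}^2-\tfrac\sigma4(Y_j+Y_{j-1})^\top\tilde HB(Y_j-Y_{j-1}).
\]
The matrix \(\tilde HB\) has off-diagonal entries \((\lambda^2-a^2)e\) and \(\lambda^2-a^2\), so it is not symmetric. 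I split it as \(S+N\), with \(S\) the symmetric matrix having off-diagonal entry \((\lambda^2-a^2)e\) and \(N\) carrying \((1-e)(\lambda^2-a^2)\) in the \((2,1)\) slot. The symmetric part telescopes exactly into \(-\tfrac\sigma2(\mathcal B_{j,\mathrm{rel}}^n-\mathcal B_{j-1,\mathrm{rel}}^n)\), and one checks that \(\tfrac12Y^\top SY\) is precisely \(\mathcal B_{k,\mathrm{rel}}^n\). The remaining cross term is \(-\tfrac\sigma4(1-e)(\lambda^2-a^2)(w_j+w_{j-1})(\Delta u_j-\Delta u_{j-1})\).

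The main obstacle is to control this nonsymmetric remainder together with \(\tfrac{\sigma^2}{8}\|B\,\delta Y\|^2\) using only the numerical diffusion \(-\tfrac18\|\delta Y\|_{\tilde H}^2\), where \(\delta Y:=Y_j-Y_{j-1}\). I would first bound \(\|B\delta Y\|_{\tilde H}^2\) by \(\lambda^2\)-type constants times \(\|\delta Y\|_{\tilde H}^2\) plus \(O(1-e)\) corrections. The CFL condition \(\lambda\sigma\le1/\sqrt3\) then leaves a diffusion reserve proportional to \((1-\lambda^2\sigma^2)\|\delta Y\|^2\). Next I would use \(1-e^{-\tau}\le\tau=\dt/(2\varepsilon)\) and apply Young's inequality to the remainder against that reserve. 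This should produce exactly a term \(\kappa_\sigma\tfrac{\dt}{\varepsilon}\tfrac12\bigl((w_{j-1}^n)^2+(w_j^n)^2\bigr)\), with \(\kappa_\sigma=(\lambda^2-a^2)\sigma^2/(2(1-\lambda^2\sigma^2))\). I expect that getting the constants to match \(\kappa_\sigma\) is where the \(1/\sqrt3\) threshold enters, and that some care is needed with the \(e\) and \(1-e\) factors inside \(\|B\,\delta Y\|^2\).

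The last step treats the implicit relaxation and the source term. From \((1+\tau)w_{j-\frac12}^{n+\frac12}=w^\ast\) I get
\[
\tfrac12(w_{j-\frac12}^{n+\frac12})^2\le\tfrac12 (w^\ast)^2-\tau(w_{j-\frac12}^{n+\frac12})^2,
\]
which provides the dissipation \(-\tfrac{\dt}{2\varepsilon}(w_{j-\frac12}^{n+\frac12})^2\). The term in \(w^\ast\) involving \(\tilde\delta_x\bar u\) is kept inside the square, so its contribution is the cross term \(-(\lambda^2-a^2)\dt\,(\tilde\delta_x\bar u)_{j-\frac12}^n\, w_{j-\frac12}^{n+\frac12}\) up to the factor \((1+\tau)\). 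I would estimate it by Young's inequality against half of the relaxation dissipation, namely \(\tfrac{\dt}{4\varepsilon}(w_{j-\frac12}^{n+\frac12})^2\). This leaves the residual \(-\tfrac{\dt}{4\varepsilon}(w_{j-\frac12}^{n+\frac12})^2\) and produces \(\tfrac{(\lambda^2-a^2)^2\varepsilon}{4}\dt\bigl((\tilde\delta_x\bar u)_{j-\frac12}^n\bigr)^2\). Collecting the three steps gives \eqref{eq:local-half-ineq-stag-corrected}.
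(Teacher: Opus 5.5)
Your route is the same as the paper's. The split \(\tilde HB=S+N\) is a matrix packaging of the paper's expansion in averages and differences of \((\Delta u,w)\), and it correctly yields \(\mathcal B_{k,\mathrm{rel}}^n\) and the nonsymmetric remainder. However, the step you leave open is easier than you expect, and the relaxation/source step fails as you describe it.

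\textbf{The step you leave open.} What you call the main obstacle is an exact identity. Set \(\delta Y=(p,q)^\top\) with \(p=\Delta u_j^n-\Delta u_{j-1}^n\), \(q=w_j^n-w_{j-1}^n\), and \(e=e^{-\tau}\). Then the mixed \(pq\) terms cancel:
\[
\|B\,\delta Y\|_{\tilde H}^2=\lambda^2\bigl((\lambda^2-a^2)p^2+e^2q^2\bigr).
\]
Consequences:
\begin{itemize}
\item There are no \(O(1-e)\) corrections.
\item The diffusion reserve is exactly \(\frac18(1-\lambda^2\sigma^2)(\lambda^2-a^2)p^2+\frac18(1-\lambda^2\sigma^2e^2)q^2\).
\item Only \(\lambda\sigma<1\) is needed. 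The threshold \(1/\sqrt3\) plays no role in this lemma; it is used in Theorem~\ref{thm:global-stag-final} to ensure \(\kappa_\sigma\le\frac14\).
\end{itemize}
When you apply Young to the remainder, keep the factor \(1-e\) on both sides. Bound it by \(1\) on the \(p^2\) side and by \(\tau\) only on the \(w\)-average side. If you replace \(1-e\) by \(\tau\) first, the coefficient becomes of order \(\tau^2\) rather than \(\kappa_\sigma\Delta t/\varepsilon\).

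\textbf{The genuine gap: the last step.} Set \(W=w_{j-\frac12}^{n+\frac12}\) and \(g=\frac{\lambda^2-a^2}{2}\Delta t\,(\tilde\delta_x\bar u)_{j-\frac12}^n\). Let \(w_h=w^\ast+g\) be the homogeneous part, so that \((1+\tau)W=w_h-g\). Then, exactly,
\[
\tfrac12W^2=\tfrac12w_h^2-\bigl(\tau+\tfrac{\tau^2}{2}\bigr)W^2-(1+\tau)gW-\tfrac12g^2 .
\]
The lemma needs \(\frac12W^2\le\frac12w_h^2-\frac\tau2W^2+\frac{g^2}{2\tau}\), where \(\frac{g^2}{2\tau}\) is exactly the last term of \eqref{eq:local-half-ineq-stag-corrected}. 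This bound is equivalent to \(\frac{1+\tau}{2}\bigl(\sqrt\tau\,W+g/\sqrt\tau\bigr)^2\ge0\). It is therefore sharp, and nothing may be discarded.

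Your argument drops \(\frac{\tau^2}{2}W^2\). It then absorbs the cross term, which carries the factor \(1+\tau\), into \(\frac\tau2W^2\). That leaves \(\frac{1+\tau+\tau^2}{2\tau}g^2\) instead of \(\frac{g^2}{2\tau}\). The excess \(\frac{1+\tau}{2}g^2\) behaves like \(\frac{(\lambda^2-a^2)^2\Delta t^3}{16\varepsilon}\bigl((\tilde\delta_x\bar u)_{j-\frac12}^n\bigr)^2\) as \(\varepsilon\to0\). This destroys uniformity in \(\varepsilon\) precisely in the stiff regime.

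\textbf{The fix.} Multiply \((1+\tau)W=w_h-g\) by \(W\) and use \(w_hW\le\frac12w_h^2+\frac12W^2\). This gives \(\frac12W^2\le\frac12w_h^2-\tau W^2-gW\), with no factor \(1+\tau\) on the cross term. Then \(|gW|\le\frac\tau2W^2+\frac{g^2}{2\tau}\) produces exactly the terms \(-\frac{\Delta t}{4\varepsilon}W^2\) and \(\frac{(\lambda^2-a^2)^2\varepsilon}{4}\Delta t\,\bigl((\tilde\delta_x\bar u)_{j-\frac12}^n\bigr)^2\) of the lemma. Alternatively, keep both \(\frac{\tau^2}{2}W^2\) and \(-\frac12g^2\) in your original expansion.

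Note also that the cross-term coefficient is \(\frac{\lambda^2-a^2}{2}\Delta t\), not \((\lambda^2-a^2)\Delta t\). With your coefficient the final constant would be four times too large.
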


\begin{proof}
We give the main algebraic steps. Set
\[
  A
  :=
  \frac{\Delta u_{j-1}^n+\Delta u_j^n}{2},
  \qquad
  B
  :=
  \frac{\Delta u_j^n-\Delta u_{j-1}^n}{2},
\]
and
\[
  D
  :=
  \frac{w_{j-1}^n+w_j^n}{2},
  \qquad
  C
  :=
  \frac{w_j^n-w_{j-1}^n}{2}.
\]
Subtracting the first relation in \eqref{eq:ubar-half-stag} from the first
relation in \eqref{eq:u-half-compact} gives
\begin{equation}
\label{eq:Du-half-thesis-final}
  \Delta u_{j-\frac12}^{n+\frac12}
  =
  A
  -
  a\sigma B
  -
  \sigma
  e^{-\frac{\Delta t}{2\varepsilon}}
  C.
\end{equation}
Similarly, the first relation in \eqref{eq:w-half-u-w-form} gives
\begin{equation}
\label{eq:w-half-thesis-final}
\begin{aligned}
  \left(
    1+\frac{\Delta t}{2\varepsilon}
  \right)
  w_{j-\frac12}^{n+\frac12}
  =
  D
  -
  (\lambda^2-a^2)\sigma B
  +
  a\sigma
  e^{-\frac{\Delta t}{2\varepsilon}}
  C
  -
  \frac{(\lambda^2-a^2)\Delta t}{2}
  (\tilde\delta_x\bar u)_{j-\frac12}^n.
\end{aligned}
\end{equation}
Using \eqref{eq:Q-split-w-stag}-\eqref{eq:entropy-stagg}, expanding
\[
  \frac{\lambda^2-a^2}{2}
  \left(
    \Delta u_{j-\frac12}^{n+\frac12}
  \right)^2
  +
  \frac12
  \left(
    w_{j-\frac12}^{n+\frac12}
  \right)^2,
\]
and applying Young's inequality to the term involving the discrete derivative
of the reference solution in \eqref{eq:w-half-thesis-final}, we obtain
\begin{align}
\label{eq:intermediate-thesis-final}
  \eta_{j-\frac12,\mathrm{rel}}^{n+\frac12}
  &\leq
  \frac12
  \left(
    \eta_{j-1,\mathrm{rel}}^n
    +
    \eta_{j,\mathrm{rel}}^n
  \right)
  -
  \frac{\sigma}{2}
  \left(
    \mathcal B_{j,\mathrm{rel}}^n
    -
    \mathcal B_{j-1,\mathrm{rel}}^n
  \right)
  \notag\\
  &\quad
  -
  \frac{\Delta t}{4\varepsilon}
  \left(
    w_{j-\frac12}^{n+\frac12}
  \right)^2
  -
  \frac{\lambda^2-a^2}{2}
  (1-\lambda^2\sigma^2)B^2
  \notag\\
  &\quad
  -
  \frac12
  \left(
    1-\lambda^2\sigma^2
    e^{-\frac{\Delta t}{\varepsilon}}
  \right)C^2
  -
  (\lambda^2-a^2)\sigma
  \left(
    1-e^{-\frac{\Delta t}{2\varepsilon}}
  \right)DB
  \notag\\
  &\quad
  +
  \frac{(\lambda^2-a^2)^2\varepsilon}{4}
  \Delta t
  \left(
    (\tilde\delta_x\bar u)_{j-\frac12}^n
  \right)^2.
\end{align}
It remains to estimate the mixed term involving \(D\) and \(B\). Since only
an upper bound is required,
\[
  -
  (\lambda^2-a^2)\sigma
  \left(
    1-e^{-\frac{\Delta t}{2\varepsilon}}
  \right)DB
  \leq
  (\lambda^2-a^2)\sigma
  \left(
    1-e^{-\frac{\Delta t}{2\varepsilon}}
  \right)|DB|.
\]
For every \(\theta>0\), Young's inequality gives
\[
  |DB|
  \leq
  \theta B^2
  +
  \frac{1}{4\theta}D^2.
\]
We choose
\[
  \theta
  =
  \frac{1-\lambda^2\sigma^2}{4\sigma},
\]
which is positive under \eqref{eq:cfl-global-final}. The resulting
\(B^2\)-term combines with the corresponding negative term in
\eqref{eq:intermediate-thesis-final} as follows:
\[
\begin{aligned}
  &-
  \frac{\lambda^2-a^2}{2}
  (1-\lambda^2\sigma^2)B^2
  +
  (\lambda^2-a^2)\sigma
  \left(
    1-e^{-\frac{\Delta t}{2\varepsilon}}
  \right)
  \frac{1-\lambda^2\sigma^2}{4\sigma}
  B^2\\
  &=
  -
  \frac{\lambda^2-a^2}{4}
  (1-\lambda^2\sigma^2)
  \left(
    1+e^{-\frac{\Delta t}{2\varepsilon}}
  \right)B^2
  \leq0.
\end{aligned}
\]
Moreover,
\[
  \frac{1}{4\theta}
  =
  \frac{\sigma}{1-\lambda^2\sigma^2},
\]
and hence the \(D^2\)-term satisfies
\[
  (\lambda^2-a^2)\sigma
  \left(
    1-e^{-\frac{\Delta t}{2\varepsilon}}
  \right)
  \frac{1}{4\theta}D^2
  =
  \frac{
    (\lambda^2-a^2)\sigma^2
  }{
    1-\lambda^2\sigma^2
  }
  \left(
    1-e^{-\frac{\Delta t}{2\varepsilon}}
  \right)D^2.
\]
Using \(1-e^{-x}\leq x\) with
\(x=\frac{\Delta t}{2\varepsilon}\), together with
\[
  D^2
  \leq
  \frac{(w_{j-1}^n)^2+(w_j^n)^2}{2},
\]
we infer that
\[(\lambda^2-a^2)\sigma
  \left(
    1-e^{-\frac{\Delta t}{2\varepsilon}}
  \right)
  \frac{1}{4\theta}D^2
  \leq
  \kappa_\sigma
  \frac{\Delta t}{\varepsilon}
  \frac{(w_{j-1}^n)^2+(w_j^n)^2}{2}.
\]
Substituting these estimates into
\eqref{eq:intermediate-thesis-final}, the remaining \(B^2\)- and \(C^2\)-terms
are nonpositive under \eqref{eq:cfl-global-final}. Discarding these terms
yields \eqref{eq:local-half-ineq-stag-corrected}.
\end{proof}
%-----------------------------------------
\begin{Lem}[Local full-step relative entropy inequality]
\label{lem:local-full-stag-corrected}
Assume the subcharacteristic condition \eqref{eq:subchar-final} and the CFL
condition \eqref{eq:cfl-global-final}. Then, for every \(j\in\mathbb Z\) and
\(n\in\mathbb N\),
\begin{align}
\label{eq:local-full-ineq-stag-corrected}
  \eta_{j,\mathrm{rel}}^{n+1}
  &\leq
  \frac12
  \left(
    \eta_{j-\frac12,\mathrm{rel}}^{n+\frac12}
    +
    \eta_{j+\frac12,\mathrm{rel}}^{n+\frac12}
  \right)
  -
  \frac{\sigma}{2}
  \left(
    \mathcal B_{j+\frac12,\mathrm{rel}}^{n+\frac12}
    -
    \mathcal B_{j-\frac12,\mathrm{rel}}^{n+\frac12}
  \right)
  -
  \frac{\Delta t}{4\varepsilon}
  (w_j^{n+1})^2
  \notag\\
  &\quad
  +
  \kappa_\sigma
  \frac{\Delta t}{\varepsilon}
  \frac{
    (w_{j-\frac12}^{n+\frac12})^2
    +
    (w_{j+\frac12}^{n+\frac12})^2
  }{2}
  +
  \frac{(\lambda^2-a^2)^2\varepsilon}{4}
  \Delta t
  \left(
    (\tilde\delta_x\bar u)_j^{n+\frac12}
  \right)^2,
\end{align}
where
\[
\begin{aligned}
  \mathcal B_{k+\frac12,\mathrm{rel}}^{n+\frac12}:=
  \frac{a(\lambda^2-a^2)}{2}
  \left(
    \Delta u_{k+\frac12}^{n+\frac12}
  \right)^2 +(\lambda^2-a^2)
  e^{-\frac{\Delta t}{2\varepsilon}}
  \Delta u_{k+\frac12}^{n+\frac12}
  w_{k+\frac12}^{n+\frac12}
  -\frac a2
  e^{-\frac{\Delta t}{2\varepsilon}}
  \left(
    w_{k+\frac12}^{n+\frac12}
  \right)^2.
\end{aligned}
\]
\end{Lem}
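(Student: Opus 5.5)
The proof will repeat the argument of Lemma~\ref{lem:local-half-stag-corrected} for the second staggered update. The second update has exactly the same algebraic structure as the first. The cell centers \(j-1,j\) at time \(t^n\) are replaced by the interfaces \(j-\frac12,j+\frac12\) at time \(t^{n+\frac12}\), and the output sits at \(x_j\), \(t^{n+1}\) instead of \(x_{j-\frac12}\), \(t^{n+\frac12}\). The exponential factor \(e^{-\frac{\Delta t}{2\varepsilon}}\) and the implicit factor \((1+\frac{\Delta t}{2\varepsilon})^{-1}\) are unchanged. I will therefore introduce the shifted averages and half-differences
\[
  A:=\tfrac12\big(\Delta u_{j-\frac12}^{n+\frac12}+\Delta u_{j+\frac12}^{n+\frac12}\big),\quad
  B:=\tfrac12\big(\Delta u_{j+\frac12}^{n+\frac12}-\Delta u_{j-\frac12}^{n+\frac12}\big),
\]
and similarly \(D,C\) for \(w_{j\mp\frac12}^{n+\frac12}\).

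First, I subtract the second relation of \eqref{eq:ubar-half-stag} from the second relation of \eqref{eq:u-half-compact}. This gives \(\Delta u_j^{n+1}=A-a\sigma B-\sigma e^{-\frac{\Delta t}{2\varepsilon}}C\). Next, I use the second relation of \eqref{eq:w-half-u-w-form}. There I write \(u_{j+\frac12}^{n+\frac12}-u_{j-\frac12}^{n+\frac12}=2B+\Delta x(\tilde\delta_x\bar u)_j^{n+\frac12}\), which yields
\[
  \Big(1+\tfrac{\Delta t}{2\varepsilon}\Big)w_j^{n+1}
  =D-(\lambda^2-a^2)\sigma B+a\sigma e^{-\frac{\Delta t}{2\varepsilon}}C
  -\tfrac{(\lambda^2-a^2)\Delta t}{2}(\tilde\delta_x\bar u)_j^{n+\frac12}.
\]
I then expand \(\eta_{j,\mathrm{rel}}^{n+1}=\frac{\lambda^2-a^2}{2}(\Delta u_j^{n+1})^2+\frac12(w_j^{n+1})^2\) using \eqref{eq:Q-split-w-stag}.

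For the \(w\)-part, I multiply the last relation by \(w_j^{n+1}\) rather than squaring it. This produces \((1+\frac{\Delta t}{2\varepsilon})(w_j^{n+1})^2\) on the left, and hence a dissipation term \(-\frac{\Delta t}{2\varepsilon}(w_j^{n+1})^2\). Young's inequality on the gradient term gives
\[
  \tfrac{(\lambda^2-a^2)\Delta t}{2}\,|(\tilde\delta_x\bar u)_j^{n+\frac12}|\,|w_j^{n+1}|
  \le \tfrac{\Delta t}{4\varepsilon}(w_j^{n+1})^2+\tfrac{(\lambda^2-a^2)^2\varepsilon\Delta t}{4}\big((\tilde\delta_x\bar u)_j^{n+\frac12}\big)^2 .
\]
This consumes half of the dissipation and leaves the \(-\frac{\Delta t}{4\varepsilon}(w_j^{n+1})^2\) term of \eqref{eq:local-full-ineq-stag-corrected}. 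The remaining quadratic terms in \(A,B,C,D\) are regrouped exactly as in \eqref{eq:intermediate-thesis-final}. The identities \(A^2+B^2=\frac12\big((\Delta u_{j-\frac12}^{n+\frac12})^2+(\Delta u_{j+\frac12}^{n+\frac12})^2\big)\) and \(2AB=\frac12\big((\Delta u_{j+\frac12}^{n+\frac12})^2-(\Delta u_{j-\frac12}^{n+\frac12})^2\big)\), together with the analogous ones for \(AC\), \(BD\) and \(CD\), turn the cross terms into the average \(\frac12(\eta_{j-\frac12,\mathrm{rel}}^{n+\frac12}+\eta_{j+\frac12,\mathrm{rel}}^{n+\frac12})\) and the flux difference \(-\frac\sigma2(\mathcal B_{j+\frac12,\mathrm{rel}}^{n+\frac12}-\mathcal B_{j-\frac12,\mathrm{rel}}^{n+\frac12})\). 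They also leave the residual terms \(-\frac{\lambda^2-a^2}{2}(1-\lambda^2\sigma^2)B^2\), \(-\frac12(1-\lambda^2\sigma^2e^{-\frac{\Delta t}{\varepsilon}})C^2\) and the mixed term \(-(\lambda^2-a^2)\sigma(1-e^{-\frac{\Delta t}{2\varepsilon}})DB\).

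The main obstacle is this bookkeeping step. I must check that every \(B^2\), \(C^2\) and \(BC\) coefficient matches the half-step computation, and in particular that the \(BC\) cross terms coming from the \(u\)-part and the \(w\)-part cancel. The first thing I would do is verify this cancellation symbolically, since the coefficients are identical to those of the first update. The factor \((1+\frac{\Delta t}{2\varepsilon})^{-1}\) must also only help, which holds because I multiplied by \(w_j^{n+1}\) and then discarded nonnegative pieces. Once this is done, the mixed \(DB\) term is treated exactly as before. I apply Young's inequality with \(\theta=\frac{1-\lambda^2\sigma^2}{4\sigma}\), which is positive under \eqref{eq:cfl-global-final}, so that the \(B^2\) contribution stays nonpositive. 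The bound \(1-e^{-x}\le x\) with \(x=\frac{\Delta t}{2\varepsilon}\), together with \(D^2\le\frac12\big((w_{j-\frac12}^{n+\frac12})^2+(w_{j+\frac12}^{n+\frac12})^2\big)\), then produces the \(\kappa_\sigma\) term. Discarding the nonpositive \(B^2\) and \(C^2\) terms gives \eqref{eq:local-full-ineq-stag-corrected}.
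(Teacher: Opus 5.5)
Your proposal is correct and is essentially the paper's proof. The paper reruns the half-step argument with the interface values at $t^{n+\frac12}$ in place of the cell values at $t^n$, and uses the same two relations for $\Delta u_j^{n+1}$ and $(1+\frac{\Delta t}{2\varepsilon})w_j^{n+1}$. Your handling of the implicit factor is a valid way to carry out the paper's ``expand and apply Young'' step: you multiply by $w_j^{n+1}$ and use $W w_j^{n+1}\le\frac12W^2+\frac12(w_j^{n+1})^2$ with $W:=D-(\lambda^2-a^2)\sigma B+a\sigma e^{-\frac{\Delta t}{2\varepsilon}}C$. The $BC$ cross terms do cancel, so you obtain exactly the intermediate inequality before the $DB$ estimate.
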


\begin{proof}
The proof follows the same argument as that of
Lemma~\ref{lem:local-half-stag-corrected}, with
\[
  \left(
    \Delta u_{j-1}^n,
    \Delta u_j^n,
    w_{j-1}^n,
    w_j^n,
    (\tilde\delta_x\bar u)_{j-\frac12}^n
  \right)
\]
replaced by
\[
  \left(
    \Delta u_{j-\frac12}^{n+\frac12},
    \Delta u_{j+\frac12}^{n+\frac12},
    w_{j-\frac12}^{n+\frac12},
    w_{j+\frac12}^{n+\frac12},
    (\tilde\delta_x\bar u)_j^{n+\frac12}
  \right).
\]
More precisely, subtracting the second relation in
\eqref{eq:ubar-half-stag} from the second relation in
\eqref{eq:u-half-compact} gives
\[
  \Delta u_j^{n+1}
  =
  A
  -
  a\sigma B
  -
  \sigma
  e^{-\frac{\Delta t}{2\varepsilon}}
  C,
\]
where \(A\), \(B\), \(C\), and \(D\) denote the corresponding averages and
differences at the intermediate staggered level. Similarly, the second
relation in \eqref{eq:w-half-u-w-form} gives
\[
\begin{aligned}
  \left(
    1+\frac{\Delta t}{2\varepsilon}
  \right)
  w_j^{n+1}
  &=
  D
  -
  (\lambda^2-a^2)\sigma B
  +
  a\sigma
  e^{-\frac{\Delta t}{2\varepsilon}}
  C
  -
  \frac{(\lambda^2-a^2)\Delta t}{2}
  (\tilde\delta_x\bar u)_j^{n+\frac12}.
\end{aligned}
\]
The same expansion and Young inequality argument then yield
\eqref{eq:local-full-ineq-stag-corrected}.
\end{proof}

\begin{proof}[Proof of Theorem~\ref{thm:global-stag-final}]
We first sum the half-step estimate
\eqref{eq:local-half-ineq-stag-corrected} over \(j\in\mathbb Z\) and multiply
the resulting inequality by \(\Delta x\). Under
assumption~\ref{ass:stag-H1}, the discrete flux term telescopes and gives no
boundary contribution. Hence,
\begin{equation}
\label{eq:half-global-final-revised}
\begin{aligned}
  \Phi_{\mathrm{rel}}^{n+\frac12}
  +
  \frac{\Delta t}{4\varepsilon}
  \Delta x
  \sum_{j\in\mathbb Z}
  \left(
    w_{j+\frac12}^{n+\frac12}
  \right)^2 &\leq
  \Phi_{\mathrm{rel}}^n
  +
  \kappa_\sigma
  \frac{\Delta t}{\varepsilon}
  \Delta x
  \sum_{j\in\mathbb Z}
  (w_j^n)^2\\
  &\quad
  +
  \frac{(\lambda^2-a^2)^2\varepsilon}{4}
  \Delta t\,\Delta x
  \sum_{j\in\mathbb Z}
  \left(
    (\tilde\delta_x\bar u)_{j+\frac12}^n
  \right)^2.
\end{aligned}
\end{equation}
Similarly, summing the full-step estimate
\eqref{eq:local-full-ineq-stag-corrected} over \(j\in\mathbb Z\) gives
\begin{equation}
\label{eq:full-global-final-revised}
\begin{aligned}
  \Phi_{\mathrm{rel}}^{n+1}
  +
  \frac{\Delta t}{4\varepsilon}
  \Delta x
  \sum_{j\in\mathbb Z}
  (w_j^{n+1})^2
  &\leq
  \Phi_{\mathrm{rel}}^{n+\frac12}
  +
  \kappa_\sigma
  \frac{\Delta t}{\varepsilon}
  \Delta x
  \sum_{j\in\mathbb Z}
  \left(
    w_{j+\frac12}^{n+\frac12}
  \right)^2\\
  &\quad
  +
  \frac{(\lambda^2-a^2)^2\varepsilon}{4}
  \Delta t\,\Delta x
  \sum_{j\in\mathbb Z}
  \left(
    (\tilde\delta_x\bar u)_j^{n+\frac12}
  \right)^2.
\end{aligned}
\end{equation}
We next compare the discrete derivatives of the reference solution at the two
staggered levels. Using the first relation in
\eqref{eq:ubar-half-stag}, we obtain
\[
  (\tilde\delta_x\bar u)_j^{n+\frac12}
  =
  \frac{1+a\sigma}{2}
  (\tilde\delta_x\bar u)_{j-\frac12}^n
  +
  \frac{1-a\sigma}{2}
  (\tilde\delta_x\bar u)_{j+\frac12}^n.
\]
Since the subcharacteristic condition and
\eqref{eq:cfl-global-final} imply \(|a|\sigma<1\), the two coefficients are
nonnegative and their sum is equal to \(1\). By convexity of
\(x\mapsto x^2\),
\begin{equation}
\label{eq:grad-stability-final-revised}
  \Delta x
  \sum_{j\in\mathbb Z}
  \left(
    (\tilde\delta_x\bar u)_j^{n+\frac12}
  \right)^2
  \leq
  \Delta x
  \sum_{j\in\mathbb Z}
  \left(
    (\tilde\delta_x\bar u)_{j+\frac12}^n
  \right)^2.
\end{equation}
Combining \eqref{eq:half-global-final-revised},
\eqref{eq:full-global-final-revised}, and
\eqref{eq:grad-stability-final-revised}, we obtain
\[
\begin{aligned}
  \Phi_{\mathrm{rel}}^{n+1}
  &+
  \frac{\Delta t}{4\varepsilon}
  \Delta x
  \sum_{j\in\mathbb Z}
  (w_j^{n+1})^2+
  \left(
    \frac14-\kappa_\sigma
  \right)
  \frac{\Delta t}{\varepsilon}
  \Delta x
  \sum_{j\in\mathbb Z}
  \left(
    w_{j+\frac12}^{n+\frac12}
  \right)^2\\
  &\leq
  \Phi_{\mathrm{rel}}^n
  +
  \kappa_\sigma
  \frac{\Delta t}{\varepsilon}
  \Delta x
  \sum_{j\in\mathbb Z}
  (w_j^n)^2
  +
  \frac{(\lambda^2-a^2)^2}{2}
  \varepsilon\Delta t\,\Delta x
  \sum_{j\in\mathbb Z}
  \left(
    (\tilde\delta_x\bar u)_{j+\frac12}^n
  \right)^2.
\end{aligned}
\]
Under \eqref{eq:cfl-global-final},
\[
\begin{aligned}
  \kappa_\sigma
  =
  \frac{
    (\lambda^2-a^2)\sigma^2
  }{
    2(1-\lambda^2\sigma^2)
  }\leq
  \frac{
    \lambda^2\sigma^2
  }{
    2(1-\lambda^2\sigma^2)
  }
  \leq
  \frac14.
\end{aligned}
\]
Therefore, the intermediate dissipation term is nonnegative and may be
discarded. Moreover,
\[
  \frac{\Delta t}{4\varepsilon}
  \Delta x
  \sum_{j\in\mathbb Z}
  (w_j^{n+1})^2
  \geq
  \kappa_\sigma
  \frac{\Delta t}{\varepsilon}
  \Delta x
  \sum_{j\in\mathbb Z}
  (w_j^{n+1})^2.
\]
We thus obtain
\[
\begin{aligned}
  \Phi_{\mathrm{rel}}^{n+1}
  &+
  \kappa_\sigma
  \frac{\Delta t}{\varepsilon}
  \Delta x
  \sum_{j\in\mathbb Z}
  (w_j^{n+1})^2\\
  &\leq
  \Phi_{\mathrm{rel}}^n
  +
  \kappa_\sigma
  \frac{\Delta t}{\varepsilon}
  \Delta x
  \sum_{j\in\mathbb Z}
  (w_j^n)^2
  +
  \frac{(\lambda^2-a^2)^2}{2}
  \varepsilon\Delta t\,\Delta x
  \sum_{j\in\mathbb Z}
  \left(
    (\tilde\delta_x\bar u)_{j+\frac12}^n
  \right)^2.
\end{aligned}
\]
Summing this inequality from \(n=0\) to \(N-1\) and applying
assumption~\ref{ass:stag-H2} yields
\eqref{eq:global-bound-final}.
\end{proof}

Estimate \eqref{eq:global-bound-final} is the discrete counterpart, for the
staggered scheme, of the continuous relative entropy estimate. In particular,
for well-prepared initial data, it yields
\[
  \Phi_{\mathrm{rel}}^N=O(\varepsilon)
\]
uniformly for \(N\Delta t\leq T\).
%============================================================
%==============================================================
% Fixed-time analysis and numerical illustration
%==============================================================
\section{Fixed-Time Analysis and Numerical Illustration}
\label{sec:fixed-time-regimes}

Proposition~\ref{prop:relative-entropy-JX} and
Theorems~\ref{thm:global-disc-rel-entropy} and
\ref{thm:global-stag-final} yield relative entropy estimates of order
\(O(\varepsilon)\) for well-prepared initial data. However,
Figure~\ref{fig:intro-fixed-time} reveals an additional quadratic regime for
the smallest values of the relaxation parameter. The purpose of this section
is to identify the mechanism responsible for this \(O(\varepsilon^2)\)
behavior.

We fix the spatial mesh, the time step, and the final time
\(
  T=t^N=N\Delta t,
\)
and vary only \(\varepsilon\). Consequently, the time index \(N\) remains fixed
throughout the analysis. 

\subsection{Numerical setting and relative entropy decomposition}
\label{subsec:fixed-time-numerical-setting}

The computations are performed on \(\Omega=[-1,1]\) up to the final time
\(T=0.1\), with
\[
  a=2,
  \qquad
  \lambda=4,
  \qquad
  N_x=1000,
  \qquad
  \Delta x=2\times10^{-3},
  \qquad
  \frac{\lambda\Delta t}{\Delta x}\leq0.9.
\]
The relaxation parameter is sampled at \(30\) logarithmically spaced values
between \(10^{-10}\) and \(10^{-1}\). 

We consider the well-prepared Riemann initial data
\[
  u_j^0
  =
  \begin{cases}
    1, & x_j<0,\\
    2, & x_j\geq0,
  \end{cases}
  \qquad
  v_j^0=a u_j^0,
  \qquad
  \bar u_j^0=u_j^0.
\]
Consequently,
\[
  w_j^0=0,
  \qquad
  \Delta u_j^0=0,
\]
where \(w_j^n\) and \(\Delta u_j^n\) are defined in
\eqref{eq:w-def-disc} and \eqref{eq:DeltaU-def-rel}, respectively.

Using \eqref{eq:delta-v-w-du} in the discrete relative entropy functional
\eqref{eq:phi-discret}, we obtain the exact decomposition
\begin{equation}
\label{eq:fixed-time-relative-entropy}
  \Phi_{\mathrm{rel}}^n
  =
  \frac{\lambda^2-a^2}{2}
  \|\Delta u^n\|_{\Delta x}^2
  +
  \frac12
  \|w^n\|_{\Delta x}^2,
  \qquad
  \|z^n\|_{\Delta x}^2
  :=
  \Delta x
  \sum_{j\in\mathbb Z}(z_j^n)^2.
\end{equation}
Hence, if
\[
  \|\Delta u^N\|_{\Delta x}=O(\varepsilon),
  \qquad
  \|w^N\|_{\Delta x}=O(\varepsilon),
\]
then the quadratic structure of the entropy yields
\[
  \Phi_{\mathrm{rel}}^N=O(\varepsilon^2).
\]

\subsection{Lie splitting scheme}
\label{subsec:fixed-time-splitting}

Combining the convective and relaxation steps
\eqref{eq:conv-step}-\eqref{eq:source-step} and subtracting the limiting
scheme \eqref{eq:limit-scheme}, we obtain
\begin{align}
  \Delta u_j^{n+1}
  &=
  \Delta u_j^n
  -
  \frac{a\Delta t}{2\Delta x}
  \left(
    \Delta u_{j+1}^n-\Delta u_{j-1}^n
  \right)
  +
  \frac{\lambda\Delta t}{2\Delta x}
  \left(
    \Delta u_{j+1}^n
    -
    2\Delta u_j^n
    +
    \Delta u_{j-1}^n
  \right)
  \notag\\
  &\quad
  -
  \frac{\Delta t}{2\Delta x}
  \left(
    w_{j+1}^n-w_{j-1}^n
  \right),
  \label{eq:split-fixed-time-du}\\
  w_j^{n+1}
  &=
  \rho w_j^n
  -
  \rho\Delta t\,X_j^n
  +
  R_j^n,
  \qquad
  \rho
  :=
  \frac{\varepsilon}{\varepsilon+\Delta t}.
  \label{eq:split-fixed-time-w}
\end{align}
Here,
\begin{equation}
\label{eq:split-fixed-time-XR}
\begin{aligned}
  X_j^n
  &:=
  (\lambda^2-a^2)
  \frac{u_{j+1}^n-u_{j-1}^n}{2\Delta x},
  \\
  R_j^n
  &:=
  \frac{\rho a\Delta t}{2\Delta x}
  \left(
    w_{j+1}^n-w_{j-1}^n
  \right)
  +
  \frac{\rho\lambda\Delta t}{2\Delta x}
  \left(
    w_{j+1}^n
    -
    2w_j^n
    +
    w_{j-1}^n
  \right).
\end{aligned}
\end{equation}
In particular, every term in \(R_j^n\) contains the factor \(\rho\), and
\[
  \rho\Delta t
  =
  \varepsilon(1-\rho).
\]
For \(m\geq1\), define
\[
  S_m(\varepsilon)
  :=
  \varepsilon(1-\rho^m).
\]
Iterating \eqref{eq:split-fixed-time-w} gives
\begin{equation}
\label{eq:split-w-decomposition}
  w_j^m
  =
  w_{1,j}^m
  -
  S_m(\varepsilon)w_{2,j}^m,
\end{equation}
where
\begin{equation}
\label{eq:split-w12-definitions}
\begin{aligned}
  w_{1,j}^m
  :=
  \rho^m w_j^0
  +
  \sum_{k=0}^{m-1}
  \rho^{m-1-k}R_j^k,
  \qquad
  w_{2,j}^m
  :=
  \frac{1-\rho}{1-\rho^m}
  \sum_{k=0}^{m-1}
  \rho^{m-1-k}X_j^k.
\end{aligned}
\end{equation}
Indeed, the identity
\[
  (1-\rho)
  \sum_{k=0}^{m-1}\rho^{m-1-k}
  =
  1-\rho^m
\]
shows that the contribution generated by \(X_j^k\) factors through
\(S_m(\varepsilon)\).

For a grid function \(z=(z_j)_j\), define
\[
  (\mathcal D_xz)_j
  :=
  \frac{z_{j+1}-z_{j-1}}{2\Delta x},
\]
and introduce the linear transport operator
\[
  (\mathcal Tz)_j
  :=
  z_j
  -
  \frac{a\Delta t}{2\Delta x}
  (z_{j+1}-z_{j-1})
  +
  \frac{\lambda\Delta t}{2\Delta x}
  (z_{j+1}-2z_j+z_{j-1}).
\]
Then \eqref{eq:split-fixed-time-du} can be written as
\[
  \Delta u^{n+1}
  =
  \mathcal T\Delta u^n
  -
  \Delta t\,\mathcal D_xw^n.
\]
Iterating this relation up to the fixed time index \(N\) and using
\eqref{eq:split-w-decomposition}, we obtain
\begin{equation}
\label{eq:split-du-decomposition}
  \Delta u^N
  =
  \Delta u_1^N
  +
  S_N(\varepsilon)\Delta u_2^N,
\end{equation}
where
\begin{equation}
\label{eq:split-du12-definitions}
\begin{aligned}
  \Delta u_1^N
  &:=
  \mathcal T^N\Delta u^0
  -
  \Delta t
  \sum_{k=0}^{N-1}
  \mathcal T^{N-1-k}
  \mathcal D_xw_1^k,
  \\
  \Delta u_2^N
  &:=
  \Delta t
  \sum_{k=1}^{N-1}
  \frac{1-\rho^k}{1-\rho^N}
  \mathcal T^{N-1-k}
  \mathcal D_xw_2^k.
\end{aligned}
\end{equation}
Substituting \eqref{eq:split-w-decomposition} and
\eqref{eq:split-du-decomposition} into
\eqref{eq:fixed-time-relative-entropy} gives
\begin{equation}
\label{eq:split-fixed-time-entropy-decomposition}
\begin{aligned}
  \Phi_{\mathrm{rel}}^N
  =
  \frac{\lambda^2-a^2}{2}
  \left\|
    \Delta u_1^N
    +
    S_N(\varepsilon)\Delta u_2^N
  \right\|_{\Delta x}^2
  +
  \frac12
  \left\|
    w_1^N
    -
    S_N(\varepsilon)w_2^N
  \right\|_{\Delta x}^2.
\end{aligned}
\end{equation}
Since the mesh, the time step, and the time index \(N\) are fixed, the
recurrences above involve only finitely many linear operations whose
coefficients are uniformly bounded with respect to \(\varepsilon\). For the
matching initial data \(w^0=\Delta u^0=0\), and since each remainder
\(R_j^n\) contains the factor \(\rho\), one obtains
\[
  \|w_1^N\|_{\Delta x}
  +
  \|\Delta u_1^N\|_{\Delta x}
  \leq
  C\rho,
  \qquad
  \|w_2^N\|_{\Delta x}
  +
  \|\Delta u_2^N\|_{\Delta x}
  \leq
  C,
\]
where \(C>0\) may depend on the fixed discretization parameters and on \(N\),
but is independent of \(\varepsilon\).

It follows from
\eqref{eq:split-fixed-time-entropy-decomposition} and the inequality
\[
  \|x+y\|^2
  \leq
  2\|x\|^2+2\|y\|^2
\]
that
\begin{equation}
\label{eq:split-rho-S-bound}
  \Phi_{\mathrm{rel}}^N
  \leq
  C
  \left(
    \rho^2+S_N(\varepsilon)^2
  \right).
\end{equation}
In the regime \(\varepsilon\ll\Delta t\),
\[
  0<\rho
  \leq
  \frac{\varepsilon}{\Delta t},
  \qquad
  0\leq
  S_N(\varepsilon)
  \leq
  \varepsilon.
\]
Since \(\Delta t\) is fixed, estimate
\eqref{eq:split-rho-S-bound} yields
\begin{equation}
\label{eq:split-fixed-time-quadratic}
  \Phi_{\mathrm{rel}}^N
  =
  O(\varepsilon^2),
  \qquad
  \varepsilon\ll\Delta t.
\end{equation}
This quadratic behavior is confirmed by the numerical results shown in
Figure~\ref{fig:fixed-time-splitting}.
\begin{figure}[!t]
  \centering
  \includegraphics[width=0.76\textwidth]{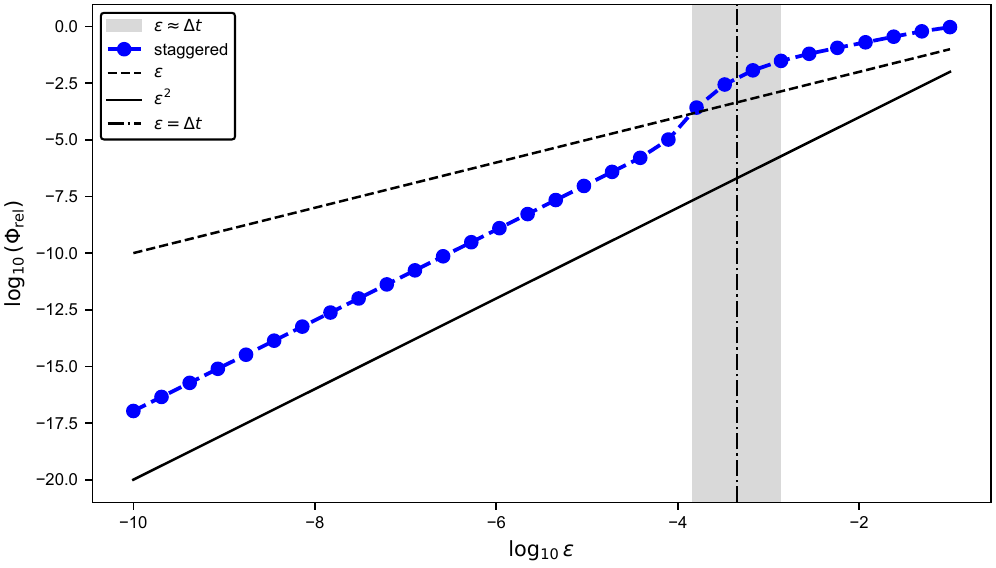}
  \caption{Fixed-time discrete relative entropy for the Lie splitting scheme.
  The vertical line indicates \(\varepsilon=\Delta t\), and the reference
  lines have slopes \(1\) and \(2\)}
  \label{fig:fixed-time-splitting}
\end{figure}

\subsection{Staggered scheme}
\label{subsec:fixed-time-staggered}

For the staggered scheme \eqref{eq:disc-JX-stag}, define
\begin{equation}
\label{eq:alpha-Theta-fixed-time}
  \alpha
  :=
  \frac{2\varepsilon}{2\varepsilon+\Delta t},
  \qquad
  \Theta_m(\varepsilon)
  :=
  \varepsilon
  \frac{1-\alpha^{2m}}{1+\alpha}.
\end{equation}
The identity
\[
  \frac{\alpha\Delta t}{2}
  =
  \varepsilon(1-\alpha),
\]
together with the two relations in
\eqref{eq:w-half-u-w-form}, yields the one-step recurrence
\begin{equation}
\label{eq:staggered-w-one-step}
  w_j^{n+1}
  =
  \alpha^2w_j^n
  -
  \alpha^2\frac{\Delta t}{2}X_{j-\frac12}^n
  -
  \alpha\frac{\Delta t}{2}Y_j^n
  +
  \widetilde R_j^{n+1},
\end{equation}
where
\begin{equation}
\label{eq:staggered-XY}
  X_{j-\frac12}^n
  :=
  (\lambda^2-a^2)
  \frac{u_j^n-u_{j-1}^n}{\Delta x},
  \qquad
  Y_j^n
  :=
  (\lambda^2-a^2)
  \frac{
    u_{j+\frac12}^{n+\frac12}
    -
    u_{j-\frac12}^{n+\frac12}
  }{\Delta x},
\end{equation}
and
\begin{equation}
\label{eq:staggered-remainder}
\begin{aligned}
  \widetilde R_j^{n+1}
  &=
  \alpha^2
  \left(
    -\frac12
    +
    \frac{a\Delta t}{2\Delta x}
    e^{-\frac{\Delta t}{2\varepsilon}}
  \right)
  (w_j^n-w_{j-1}^n)
  \\
  &\quad
  +
  \alpha
  \left(
    \frac12
    +
    \frac{a\Delta t}{2\Delta x}
    e^{-\frac{\Delta t}{2\varepsilon}}
  \right)
  \left(
    w_{j+\frac12}^{n+\frac12}
    -
    w_{j-\frac12}^{n+\frac12}
  \right).
\end{aligned}
\end{equation}
Thus, every term in \(\widetilde R_j^{n+1}\) contains at least one factor
\(\alpha\).

Iterating \eqref{eq:staggered-w-one-step} gives, for \(m\geq1\),
\begin{equation}
\label{eq:staggered-w-decomposition}
  w_j^m
  =
  w_{1,j}^m
  -
  \Theta_m(\varepsilon)w_{2,j}^m,
\end{equation}
where
\begin{equation}
\label{eq:staggered-w12-definitions}
\begin{aligned}
  w_{1,j}^m
  &:=
  \alpha^{2m}w_j^0
  +
  \sum_{k=0}^{m-1}
  \alpha^{2(m-1-k)}
  \widetilde R_j^{k+1},
  \\
  w_{2,j}^m
  &:=
  \frac{1-\alpha^2}{1-\alpha^{2m}}
  \sum_{k=0}^{m-1}
  \alpha^{2(m-1-k)}
  \left(
    \alpha X_{j-\frac12}^k+Y_j^k
  \right).
\end{aligned}
\end{equation}
Indeed,
\[
  \varepsilon(1-\alpha)
  \sum_{k=0}^{m-1}
  \alpha^{2(m-1-k)}
  =
  \Theta_m(\varepsilon)
  \frac{1-\alpha^2}{1-\alpha^{2m}}
  \sum_{k=0}^{m-1}
  \alpha^{2(m-1-k)}.
\]

Subtracting the two limiting updates in
\eqref{eq:ubar-half-stag} from the corresponding two relaxation updates in
\eqref{eq:u-half-compact}, and inserting
\eqref{eq:staggered-w-decomposition}, yields after iteration up to \(N\)
\begin{equation}
\label{eq:staggered-du-decomposition}
  \Delta u^N
  =
  \Delta u_1^N
  +
  e^{-\frac{\Delta t}{2\varepsilon}}
  \Theta_N(\varepsilon)\Delta u_2^N.
\end{equation}
Here, \(\Delta u_1^N\) collects the contributions generated by the initial
error and by the remainder terms, whereas \(\Delta u_2^N\) contains the
normalized forcing contributions associated with \(X\) and \(Y\). The
coefficients occurring in the latter are uniformly bounded because
\[
  0
  \leq
  \frac{\Theta_m(\varepsilon)}{\Theta_N(\varepsilon)}
  =
  \frac{1-\alpha^{2m}}{1-\alpha^{2N}}
  \leq
  1,
  \qquad
  0\leq m\leq N.
\]

Combining \eqref{eq:staggered-w-decomposition},
\eqref{eq:staggered-du-decomposition}, and
\eqref{eq:fixed-time-relative-entropy}, we obtain
\begin{equation}
\label{eq:staggered-fixed-time-entropy-decomposition}
\begin{aligned}
  \Phi_{\mathrm{rel}}^N
  &=
  \frac{\lambda^2-a^2}{2}
  \left\|
    \Delta u_1^N
    +
    e^{-\frac{\Delta t}{2\varepsilon}}
    \Theta_N(\varepsilon)\Delta u_2^N
  \right\|_{\Delta x}^2
  \\
  &\quad
  +
  \frac12
  \left\|
    w_1^N
    -
    \Theta_N(\varepsilon)w_2^N
  \right\|_{\Delta x}^2.
\end{aligned}
\end{equation}

For the matching initial data and fixed discretization parameters,
\eqref{eq:staggered-remainder} and
\eqref{eq:staggered-w12-definitions} imply
\[
  \|w_1^N\|_{\Delta x}
  +
  \|\Delta u_1^N\|_{\Delta x}
  \leq
  C\alpha,
  \qquad
  \|w_2^N\|_{\Delta x}
  +
  \|\Delta u_2^N\|_{\Delta x}
  \leq
  C,
\]
where \(C>0\) is independent of \(\varepsilon\). Since
\[
  0<
  e^{-\frac{\Delta t}{2\varepsilon}}
  \leq1,
\]
we deduce from
\eqref{eq:staggered-fixed-time-entropy-decomposition} that
\begin{equation}
\label{eq:staggered-alpha-Theta-bound}
  \Phi_{\mathrm{rel}}^N
  \leq
  C
  \left(
    \alpha^2+\Theta_N(\varepsilon)^2
  \right).
\end{equation}
In the regime \(\varepsilon\ll\Delta t\),
\[
  0<\alpha
  \leq
  \frac{2\varepsilon}{\Delta t},
  \qquad
  0\leq
  \Theta_N(\varepsilon)
  \leq
  \varepsilon.
\]
Since \(\Delta t\) is fixed, it follows that
\begin{equation}
\label{eq:staggered-fixed-time-quadratic}
  \Phi_{\mathrm{rel}}^N
  =
  O(\varepsilon^2),
  \qquad
  \varepsilon\ll\Delta t.
\end{equation}
The corresponding quadratic regime for the staggered scheme is illustrated in
Figure~\ref{fig:fixed-time-staggered}.
\begin{figure}[!t]
  \centering
  \includegraphics[width=0.76\textwidth]{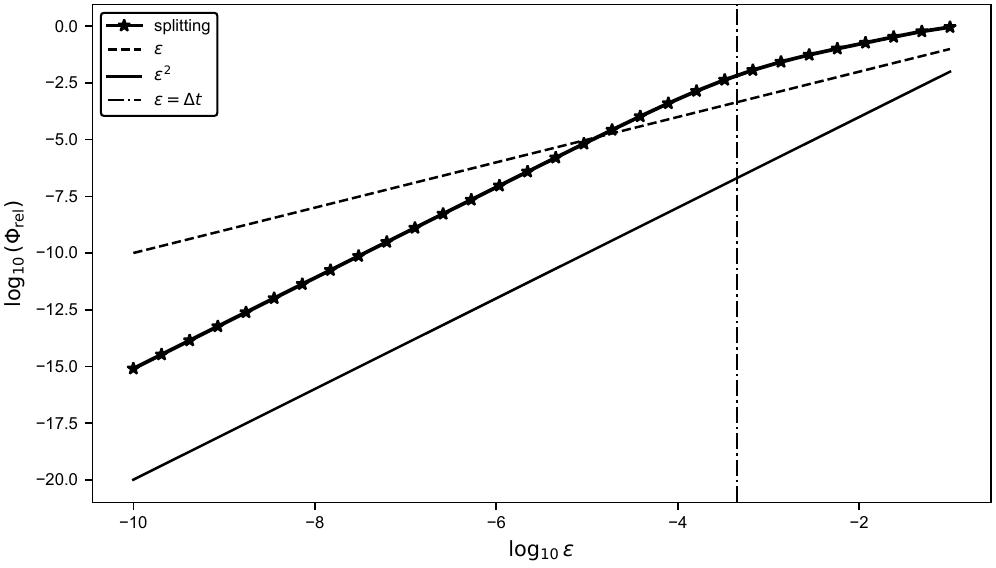}
  \caption{Fixed-time discrete relative entropy for the staggered scheme. The
  vertical line indicates \(\varepsilon=\Delta t\), the shaded region
  corresponds to comparable relaxation and discretization scales, and the
  reference lines have slopes \(1\) and \(2\)}
  \label{fig:fixed-time-staggered}
\end{figure}

\subsection{Interpretation of the numerical results}
\label{subsec:fixed-time-regime-comparison}

In the regime \(\varepsilon\ll\Delta t\), the factors
\[
  \rho,
  \qquad
  S_N(\varepsilon),
  \qquad
  \alpha,
  \qquad
  \Theta_N(\varepsilon)
\]
are of order \(O(\varepsilon)\) for fixed \(\Delta t\). Consequently,
\[
  \|\Delta u^N\|_{\Delta x}
  +
  \|w^N\|_{\Delta x}
  =
  O(\varepsilon)
\]
for both schemes. The quadratic identity
\eqref{eq:fixed-time-relative-entropy} then yields
\(
  \Phi_{\mathrm{rel}}^N
  =
  O(\varepsilon^2).
\)

When \(\varepsilon\) and \(\Delta t\) are comparable, neither \(\rho\) nor
\(\alpha\) is small. The additional damping mechanism responsible for the
quadratic regime is therefore progressively lost. The transition is sharper
for the Lie splitting scheme, in which the transport and relaxation operators
are applied successively. In the staggered method, the relaxation mechanism
also enters through the intermediate states, consistently with the smoother
transition observed in the numerical results.

Finally, when \(\Delta t\ll\varepsilon\), the relaxation scale is resolved by
the temporal discretization. In this regime, the fixed-time decompositions no
longer provide the additional small factors used in the quadratic estimates.
The global \(O(\varepsilon)\) relative entropy estimates established in
Theorems~\ref{thm:global-disc-rel-entropy} and
\ref{thm:global-stag-final} therefore remain the relevant uniform bounds.
%-------------------------------------------
\section{Conclusion}

We have established continuous and fully discrete relative entropy estimates
for the linear Jin-Xin relaxation system. For well-prepared initial data, the
continuous problem and the two asymptotic-preserving discretizations considered
in this work satisfy uniform first-order estimates with respect to the
relaxation parameter.

The numerical results additionally exhibit a quadratic regime when the
relaxation scale is smaller than the time step. The fixed-time analysis
identifies the mechanism responsible for this sharper behavior and explains
the different transitions observed for the Lie splitting and staggered
schemes.

\medskip
\textbf{Acknowledgements.}
This work received financial support from the CNRS grant
\textit{D\'efi Math\'ematiques France 2030} and from the Inria project-team
\textit{ANGUS}.

\bibliographystyle{plain}
\bibliography{biblio}

\begin{thebibliography}{10}

\bibitem{BerthonBessemoulinMathis16}
C.~Berthon, M.~Bessemoulin-Chatard, and H.~Mathis.
\newblock Numerical convergence rate for a diffusive limit of hyperbolic
  systems: {$p$}-system with damping.
\newblock {\em SMAI Journal of Computational Mathematics}, 2:99--119, 2016.

\bibitem{BessemoulinMathis24}
M.~Bessemoulin-Chatard and H.~Mathis.
\newblock Relative entropy for the numerical diffusive limit of the linear
  {Jin-Xin} system.
\newblock {\em ESAIM: Proceedings and Surveys}, 79:126--138, 2025.

\bibitem{BouchutBook04}
F.~Bouchut.
\newblock {\em Nonlinear Stability of Finite Volume Methods for Hyperbolic
  Conservation Laws and Well-Balanced Schemes for Sources}.
\newblock Frontiers in Mathematics. Birkh{\"a}user, Basel, 2004.

\bibitem{BulteauBerthonBessemoulin19}
S.~Bulteau, C.~Berthon, and M.~Bessemoulin-Chatard.
\newblock Convergence rate of an asymptotic-preserving scheme for the diffusive
  limit of the {$p$}-system with damping.
\newblock {\em Communications in Mathematical Sciences}, 17(6):1459--1486,
  2019.

\bibitem{ChenLevermoreLiu94}
G.-Q. Chen, C.~D. Levermore, and T.-P. Liu.
\newblock Hyperbolic conservation laws with stiff relaxation terms and entropy.
\newblock {\em Communications on Pure and Applied Mathematics}, 47(6):787--830,
  1994.

\bibitem{DafermosBook10}
C.~M. Dafermos.
\newblock {\em Hyperbolic Conservation Laws in Continuum Physics}, volume 325
  of {\em Grundlehren der Mathematischen Wissenschaften}.
\newblock Springer, Berlin, 3 edition, 2010.

\bibitem{HartenLaxvanLeer1983}
A.~Harten, P.~D. Lax, and B.~van Leer.
\newblock On upstream differencing and godunov-type schemes for hyperbolic
  conservation laws.
\newblock {\em SIAM Review}, 25(1):35--61, 1983.

\bibitem{jin2010asymptotic}
S.~Jin.
\newblock Asymptotic preserving ({AP}) schemes for multiscale kinetic and
  hyperbolic equations: a review.
\newblock {\em Rivista di Matematica della Universit{\`a} di Parma},
  3(2):177--216, 2012.

\bibitem{Jin2022}
S.~Jin.
\newblock Asymptotic-preserving schemes for multiscale physical problems.
\newblock {\em Acta Numerica}, 31:415--489, 2022.

\bibitem{JinLevermore96}
S.~Jin and C.~D. Levermore.
\newblock Numerical schemes for hyperbolic conservation laws with stiff
  relaxation terms.
\newblock {\em Journal of Computational Physics}, 126(2):449--467, 1996.

\bibitem{JinXin1995}
S.~Jin and Z.~Xin.
\newblock The relaxation schemes for systems of conservation laws in arbitrary
  space dimensions.
\newblock {\em Communications on Pure and Applied Mathematics}, 48(3):235--276,
  1995.

\bibitem{LattanzioTzavaras12}
C.~Lattanzio and A.~E. Tzavaras.
\newblock Relative entropy in diffusive relaxation.
\newblock {\em SIAM Journal on Mathematical Analysis}, 45(3):1563--1584, 2013.

\bibitem{MahmoudMathis2025}
C.~Mahmoud and H.~Mathis.
\newblock Asymptotic preserving schemes for hyperbolic systems with relaxation,
  2025.
\newblock Preprint, \href{https://hal.science/hal-05291431v1}{HAL:
  hal-05291431v1}.

\bibitem{Natalini1996}
R.~Natalini.
\newblock Convergence to equilibrium for the relaxation approximations of
  conservation laws.
\newblock {\em Communications on Pure and Applied Mathematics}, 49(8):795--823,
  1996.

\bibitem{SerreBook99}
D.~Serre.
\newblock {\em Systems of Conservation Laws 1: Hyperbolicity, Entropies, Shock
  Waves}.
\newblock Cambridge University Press, Cambridge, 1999.

\bibitem{SerreRelax2000}
D.~Serre.
\newblock Relaxations semi-lin{\'e}aire et cin{\'e}tique des syst{\`e}mes de
  lois de conservation.
\newblock {\em Annales de l'Institut Henri Poincar{\'e} C, Analyse non
  lin{\'e}aire}, 17(2):169--192, 2000.

\bibitem{Tadmor2003}
E.~Tadmor.
\newblock Entropy stability theory for difference approximations of nonlinear
  conservation laws and related time-dependent problems.
\newblock {\em Acta Numerica}, 12:451--512, 2003.

\bibitem{Tzavaras05}
A.~E. Tzavaras.
\newblock Relative entropy in hyperbolic relaxation.
\newblock {\em Communications in Mathematical Sciences}, 3(2):119--132, 2005.

\end{thebibliography}

\end{document}